\renewcommand{\vec}[1]{\mathbf{#1}}
\newcommand\norm[1]{\left\lVert#1\right\rVert _2 }
\newcommand\Lnorm[1]{\left\lVert#1\right\rVert _ {L^2(\Omega,\mathbb{R}^d)} }
\newcommand\eval[1]{\mathbb{E}\left[ #1  \right]    }
\newcommand{\expint}[1]{\textit{#1}}
\newcommand{\ETDint}[1]{\textit{#1}}
\newcommand{\Mil}{MI0}
\newcommand{\MIL}{\mathit{MI0}}
\newcommand{\Exact}{\text{Exact}}
\newcommand{\secref}[1]{Section \ref{#1}\xspace}
\newcommand{\figref}[1]{Figure \ref{#1}\xspace}
\newcommand{\thref}[1]{Theorem \ref{#1}\xspace}
\newtheorem{definition*}{Definition}
\newtheorem{theorem*}{Theorem}
\newtheorem{proposition}{Proposition}
\newtheorem{lemma}{Lemma}
\newtheorem{assumption}{Assumption}
\newtheorem{corollary}{Corollary}
\newcommand{\SG} [1]{\mathbf{\Phi} _{#1} }
\newcommand{\ZZ}[1] {\mathbf{Z} _{#1} }
\newcommand{\FF}[1] {\mathbf{\varphi}(#1)}
\newcommand{\Dt}{\Delta t}
\begin{document}

\title[A New Class of Exponential Integrators for
 SDEs With
 Multiplicative Noise]{A New Class of Exponential Integrators for
 Stochastic Differential Equations With
 Multiplicative Noise}

%    Information for first author
\author{Utku Erdo\u{g}an}
%    Address of record for the research reported here
\address{ Department of Mathematics, Faculty of Art and Science, U\c{s}ak University, 64200, U\c{s}ak,
Turkey.}
%    Current address
%\curraddr{Department of Mathematics and Statistics,
%Case Western Reserve University, Cleveland, Ohio 43403}
\email{utku.erdogan@usak.edu.tr}
%    \thanks will become a 1st page footnote.
%\thanks{The first author was supported in part by NSF Grant \#000000.}

%    Information for second author
\author{ Gabriel J. Lord}
\address{Department of Mathematics and Maxwell Institute for Mathematical Sciences, Heriot Watt University,
EH14 4AS Edinburgh, U.K.}
\email{g.j.lord@hw.ac.uk}
%\thanks{Support information for the second author.}

%    General info
\subjclass[2010]{65C30, 65H35}

%\date{January 1, 2001 and, in revised form, June 22, 2001.}

%\dedicatory{This paper is dedicated to our advisors.}

\keywords{SDEs, Exponential Integrator, Euler Maruyama, Exponential Milstein,
Homotopy, Geometric Brownian Motion}

\begin{abstract}
In this paper, we present new types of exponential integrators  for
Stochastic Differential Equations (SDEs) that take advantage of
the exact solution of (generalised) geometric Brownian motion.
We examine both Euler and Milstein versions of the scheme and prove 
strong convergence. For the special case of linear noise we obtain an
improved rate of convergence for the Euler version over standard
integration methods. We investigate the efficiency of the methods
compared with other exponential integrators and show that by
introducing a suitable homotopy parameter these schemes are
competitive not only when the noise is linear but also in the presence
of nonlinear noise terms. 
\end{abstract}

\maketitle
\section{Introduction}
We develop new exponential integrators for the numerical approximation stochastic differential equations (SDEs) of the following form
\begin{equation} \label{eq:EqABg}
    d\vec{u}= \left( A \vec{u}+\bf{F}(\vec{u}) \right) dt + \sum _{i=1} ^m  \left(  B_i \vec{u} +\vec{g}_i(\vec{u}) \right)  dW_i (t), \qquad \vec{u} (0)=\vec{u}_0 \in  \mathbb{R} ^d
\end{equation}
where $W_i(t)$ are iid Brownian Motions, $\vec{F}, \vec{g} _i : \mathbb{R} ^d \rightarrow \mathbb{R} ^d$, and   matrices $A,B_i \in \mathbb{R} ^{dxd }$ satisfy the following zero commutator conditions
 \begin{equation*} \label{eq:commute}
 \left[ A,B_i \right]=0, \quad  \left[ B_j,B_i \right]=0 \qquad \text{for} \quad  i,j=1 \hdots m.
 \end{equation*}
%We examine strong convergence and investigate efficiency of the
%schemes we propose applied to a number of problems by numerical experiments.
In the deterministic setting, exponential integrators have proved to
be  very efficient in the numerical solution of stiff (partial)
differential equations when compared to implicit solvers see, for
example, the review in \cite{hochbruck2010exponential}. The 
derivation and  usage of exponential integrators in the stochastic
setting is still an active research area. Local linearisation methods
were first proposed by \cite{jimenez1999simulation, biscay1996local} for
SDEs with both additive and multiplicative noise. These methods continue
to receive attention, see for example \cite{Mora,JimenezCarbonell}
looking at weak approximation and for example  \cite{CarbonellJimenez}
on general noise terms. Recently \cite{YoshiroBurrage} examined
mean square stability of exponential integrators for semi-linear stiff
SDEs. The method is the same basic one as developed
for the space discretisations of SPDEs. For SPDE's with additive
noise, \cite{gevrey2004} introduced an exponential scheme for 
stochastic PDEs 
%with Gevrey regularity that is superior to standard
%semi-implicit Euler- Maruyama scheme. 
and was improved upon in \cite{JentzenKloedenOrder,kloeden2011exponential},
%kloeden2011exponential} The error analysis for the
%pathwise approximation of a general semi-linear stochastic evolution
%equation with additive noise on a Hilbert space is  given in
%\cite{kloeden2011exponential}.  
Jentzen and co-workers (see for
example \cite{JentzenKloedenOrder,JentzenPW,Jentzen} and
references there in) have further extended these results to include
more general nonlinearities. There has been less work on exponential
integrators with multiplicative noise. Strong convergence of
stochastic exponential integrators for  SDEs obtained from space
discretisation of stochastic partial differential equations (SPDEs) by
finite element method is considered in \cite{lord2012} and recently, a
higher order exponential integrator of Milstein type  has 
been introduced by  Jentzen and  R{\"o}ckner \cite{expmil}.

All the above exponential integrators for SDEs (e.g. arising from the
discretisation of the SPDEs) are based on the semi
group operator $\mathbf{S}_{t,t_0}=\exp((t-t_0)A)$ obtained from the
following linear equation 
\begin{equation*}  \label{eq:ETD_semigroup}
d  \mathbf{S}_{t,t_0} = A    \mathbf{S}_{t,t_0}  dt,  \qquad  \mathbf{S}_{t_0,t_0}  =I_d 
\end{equation*}
where $I_d$ is unit matrix in $ \mathbb{R} ^{d \times d}$. 
For comparison, consider the following two 
standard exponential integrators for \eqref{eq:EqABg} with multiplicative
noise: \ETDint{SETD0}
\begin{equation}
  \label{eq:SETD0}
  \vec{u}_n= e^{\Delta tA} \left(\vec{u}_n+  \vec{F} (\vec{u}_n) \Delta t +\sum _{i=1} ^m \left(  B_i \vec{u}_n+ \vec{g}_i(\vec{u}_n )\right) \Delta W_{i,n} \right) 
\end{equation}
and \ETDint{SETD1}
\begin{equation}
  \label{eq:SETD1}
  \vec{u}_n= e^{\Delta t A} \left(\vec{u}_n+ \sum _{i=1} ^m \left(  B_i \vec{u}_n+ \vec{g}_i(\vec{u}_n )\right) \Delta W_{i,n}\right) +  \FF{\Delta t A}  \vec{F} (\vec{u}_n) \Delta t,
\end{equation}
where 
$$\FF{A}  =A^{-1} \left( \exp(A)-I_d\right).$$
These methods are essentially exact for a linear system of ODEs.
We extend this approach to take advantage of the known solution of
geometric Brownian motion in the numerical approximation. To do this,
consider the linear homogeneous matrix differential equation  
\begin{equation} \label{eq:Homogen}
d  \SG{t, t_0} = A  \SG{t,t_0}   dt + \sum _{i=1} ^m  \ B_i \SG{t,
  t_0}dW_i (t), \qquad \SG{t_0, t_0} =I_d
\end{equation} 
and these new schemes are exact for a class of linear systems of
multiplicative SDEs of this form. 

In the next section our new exponential integrators for multiplicative
noise are derived and the homotopy scheme is also introduced. The main
results of strong convergence analysis for the Euler and Milstein
versions of the scheme are stated in \secref{sec:Mainresults} and
numerical examples are presented to examine the efficiency of the
proposed schemes. For linear noise we obtain a strong rate of
$\mathcal{O}(\Dt)$ convergence for Euler type scheme, improving over
standard methods in this case. \secref{sec:proofs} proves strong convergence
of $\mathcal{O}(\Dt)$ for the Milstein version and finally we conclude.
 
\section{Derivation of  the  methods} 
\label{sec:methods}
Throughout we assume that  $T \in (0,\infty)$ is a
fixed real number and we have    a partition of the time interval
$[0,T]$, $0=t_0<t_1<t_2 \hdots  t_N=T$ with constant step size $\Delta
t=t_{j+1}-t_j$. Let $(\Omega , \mathcal{F},\mathbb{P})$ be  a
probability  space with filtration $({\mathcal{F} _ {t}  }) _{t \in
  [0,T]}$. Then  under suitable assumptions on $\vec{F}$ and
$\vec{g}_i$ it is well known that there exists an $\mathcal{F} _ {t^-}$
adapted stochastic process $u:[0,T] \times \Omega  \rightarrow  \mathbb{R} ^d $
satisfying \eqref{eq:EqABg},
\cite{MR1475218,Oksendal2003stochastic,gabrielbook}.   
The linear homogeneous matrix differential equation \eqref{eq:Homogen}
has the exact solution 
\begin{equation*} \label{eq:semigroup}
\SG{t,t_0}=\exp \left(( A-\frac{1}{2} \sum_{i=1} ^m B_i ^2)(t-t_0) +
  \sum_{i=1}^m B_i (W_i (t)  -W_i (t_0)) \right). 
\end{equation*}
Let $\vec{u} (t)$ be the solution of \eqref{eq:EqABg} and take
$t=t_{n+1}$, $t_0=t_n$. Then, applying the Ito formula to
$\vec{Y}(t)=\SG  {t,t_0} ^{-1} \vec{u}$, we obtain 

\begin{multline}  \label{eq:exact}
\vec{u}(t_{n+1})=\SG{t_{n+1},t_n} \left( \vec{u}(t_n) + \int _{t_n} ^{t_{n+1}} \SG {s,t_n} ^{-1} \tilde{f} \left( \vec{u} (s) \right)   ds \right. \\ + \left . \sum _{i=1} ^m \int _{t_n} ^{t_{n+1}} \SG {s,t_n}   ^{-1} \vec{g}_i(\vec{u}(s))    dW_i(s) \right)
\end{multline}
where
\begin{equation} \label{eq:def_of_functions}
  \tilde{\vec{f}} (.)= \vec{F} (.) -\sum _{i=1} ^m B_i \vec{g}_i(.).
\end{equation}
%or equivalently 
%\begin{multline*}  \label{eq:exact_other}
%u(t_{n+1})=\SG{t_{n+1},t_n} \left( \vec{u}(t_n) + \int _{t_n} ^{t_{n+1}} \SG {s,t_n} ^{-1} \tilde{f} \left( \vec{u} (s) \right)   ds \right. \\ + \left . \sum _{i=1} ^m \int _{t_n} ^{t_{n+1}} \SG {s,t_n}   ^{-1} \vec{g}_i(\vec{u}(s))    dW_i(s) \right)
%\end{multline*}
Different treatment of the integrals in \eqref{eq:exact} leads to
different numerical schemes. We examine Euler and Milstein type
methods here, although clearly higher order methods, such as
Wagner-Platen type schemes (see for example
\cite{BeckerJentzenKloeden}) could be developed. 
\subsection{Euler Type Exponential Integrators}
When we take  the following approximation for the stochastic integral
\begin{equation} \label{eq:stoch_integral}
 \SG {t_{n+1},t_n}  \int _{t_n} ^{t_{n+1}} \SG {s,t_n} ^{-1}  \vec{g}_i(\vec{u}(s))    dW_i (s) 
  \approx  \SG {t_{n+1},t_n}  \vec{g}_i(\vec{u}(t_n) )\Delta W_{i,n} 
\end{equation}
where $\Delta W_{i,n}=W_i(t_{n+1})-W_i(t_{n})$, we derive Euler type Exponential Integrators below. 
For the deterministic integral in \eqref{eq:exact} we examine three cases. 
%\begin{description}
%\item[\expint{EI0}] 
\begin{enumerate}
\item First taking
$\SG {t_{n+1},t_n} \int _{t_n} ^{t_{n+1}} \SG {s,t_n}^{-1} \tilde{\vec{f}} (\vec{u}(s))
ds \approx \SG {t_{n+1},t_n} \tilde{\vec{f}} (\vec{u}(t_n)) \Delta
t$, we obtain our first method \expint{EI0}
\begin{equation*}  \label{eq:EI0}
  \vec{u}_n=\SG{t_{n+1},t_n}\left( \vec{u}_n+ \tilde{\vec{f}} (\vec{u}_n) \Delta t +   \sum _{i=1} ^m \vec{g}_i(\vec{u}_n )\Delta W_{i,n} \right).
\end{equation*}
%\item[\expint{EI1}] 
\item If we take
$\SG {t_{n+1},s}  \int _{t_n} ^{t_{n+1}} \SG{s,t_n}^{-1} \tilde{\vec{f}} (\vec{u}(s))
ds \approx \ZZ{t_{n+1},t_n} \FF{\Dt A} \tilde{\vec{f}} (\vec{u}(t_n))
\Delta t $
where 
$$\ZZ{t,s}  =\exp \left( -\frac{1}{2} \sum_{i=1} ^m B_i ^2(t-s) +
  \sum_{i=1}^m B_i(W_i (t)  -W_i (s)) \right)
$$
then we obtain our second method \expint{EI1} 
\begin{equation*} \label{eq:EI1}
  \vec{u}_n=\SG{t_{n+1},t_n} \left(\vec{u}_n+ \sum _{i=1} ^m  \vec{g}_i(\vec{u}_n )\Delta W_{i,n} \right) + \ZZ{t_{n+1},t_n} \FF{\Delta t A}  \tilde{\vec{f}} (\vec{u}_n) \Delta t.
\end{equation*} 
%\item[\expint{EI2}] 
\item Finally with $\SG {t_{n+1},t_k}  \int _{t_n} ^{t_{n+1}} \SG {s,t_n}^{-1} \tilde{\vec{f}}
(\vec{u}(s))  ds \approx \FF{\Dt A} \tilde{\vec{f}} (\vec{u}(t_n))
\Delta t$ 
we get the method \expint{EI2}
\begin{equation*} \label{eq:EI2}
  \vec{u}_n=\SG{t_{n+1},t_n} \left(\vec{u}_n+ \sum _{i=1} ^m  \vec{g}_i(\vec{u}_n )\Delta W_{i,n} \right) + \FF{\Delta t A}  \tilde{\vec{f}} (\vec{u}_n)   \Delta t.
\end{equation*}
\end{enumerate}
%\end{description}
We compare the accuracy and efficiency of these approximations for
different numerical examples in \secref{sec:numexamples}.
In \secref{Sec:Mil} below we use a higher order approximation of  the stochastic
integral to derive Milstein versions of these scheme.
%Note that by employing higher order approximation for the stochastic
%integral rather than \eqref{eq:stoch_integral} it is possible to derive
%Milstein versions of these schemes.
For general noise the schemes \expint{EI0}, \expint{EI1}, \expint{EI2}
all have the same strong rate of convergence as \ETDint{SETD0} in
\eqref{eq:SETD0} and  \ETDint{SETD1} \eqref{eq:SETD1} which is $\Dt^{1/2}$.
However, we expect an improvement in the error when the terms in $B_i$
dominate $g_i$ in the noise. In the special case where $\vec{g}_i
\equiv 0$ we prove, and show numerically, an improvement in the strong
rate of convergence to order one.

It should be noted that all the proposed new type integrators
reduce to the usual exponential integrators \ETDint{SETD0} and
\ETDint{SETD1} when $B_i=0$, $i=1 \hdots m$. 
Indeed, it is observed  in numerical simulations that \ETDint{SETD}
schemes may perform better than the new  \expint{EI} schemes when $B_i$ are small compared to
$\vec{g}_i$. On the other hand the \expint{EI} schemes outperform
\ETDint{SETD} schemes when $B_i$ are dominant.
We can capture the good properties of both types of methods by
introducing a homotopy type parameter $p\in[0,1]$.
Let us rewrite \eqref{eq:EqABg} as 
\begin{equation} \label{eq:Eqhom}
  d\vec{u}= \left( A \vec{u}+\bf{F}(\vec{u}) \right) dt + \sum _{i=1} ^m  \left( p B_i \vec{u} +\vec{g}_i(\vec{u}) + (1-p) B_i \vec{u} \right)  dW_i (t) .
\end{equation}
For example, applying \expint{EI0} for this equation, one obtains \expint{HomEI0}
\begin{equation}  \label{eq:HomEI0}
  \vec{u}_n=\SG{t_{n+1},t_n} ^p\left( \vec{u}_n+ \tilde{\vec{f}} ^p (\vec{u}_n) \Delta t +   \sum _{i=1} ^m \vec{g}_i ^ p(\vec{u}_n )\Delta W_{i,n} \right)  
\end{equation}
where 
\begin{align} 
&\SG{t_{n+1},t_n} ^p=\exp \left(( A-\frac{1}{2} \sum_{i=1} ^m p^2 B_i ^2)\Delta t + \sum_{i=1}^m pB_i  \Delta W_i,n  \right),\\
&\vec{g}_i ^p (\vec{u})=\vec{g}_i(\vec{u}) + (1-p) B_i \vec{u},  \quad
  \text{and } \quad 
  \tilde{\vec{f}} ^p (\vec{u})= \vec{F} (\vec{u}) -\sum _{i=1} ^m p B_i \vec{g}_i ^p(\vec{u}).
\end{align}
It is clear that $p=0$ and $p=1$  give \ETDint{SETD0}  and
\expint{EI0}  respectively. In \secref{sec:numexamples} we
suggest a fixed formula for $p$ based on the weighting of $B_i$ to
$\vec{g}_i$. However, further consideration could be given to an
optimal choice of either a fixed $p$ or of a $p$ assigned
during the computation by considering weights of the terms 
in the diffusion coefficient, so that $p(u,B_i,g_i)$.
We note that unlike Milstein methods, \expint{HomEI0} and the other
\expint{EI} methods have the advantage that they do not require the
derivative of the diffusion term. 

\subsection{Milstein type Exponential Integrators}
\label{Sec:Mil}
An alternative treatment of \eqref{eq:exact} is to use the Ito-Taylor
expansion of the diffusion term
\begin{multline}
\label{eq:expansion}
\SG{s,t_n} ^{-1}  \vec{g}_i(\vec{u} (s))=\vec{g}_i(\vec{u} (t_n))+
\sum_{l=1}^m \int _{t_n} ^s  
\SG{r,t_n} ^{-1}   \vec{H}_{i,l} (\vec{u}(r)) dW_l(r)+\int _{t_n} ^s \SG{r,t_n} ^{-1}  \vec{Q}_i(\vec{u}(r)) dr
\end{multline}
where  
\begin{equation} \label{eq:H}
\vec{H}_{i,l} (\vec{u}(.))=D\vec{g}_i\left(\vec{u} \left(.\right)\right) \left( B_l \vec{u} \left(.\right)+ \vec{g}_l\left(\vec{u} \left(.\right)\right)\right) -B_l \vec{g}_i\left(\vec{u} \left(.\right)\right).
\end{equation}
and $\vec{Q}_i (.)$ is the  vector function in terms of
$A,\vec{F},D\vec{g}_i,D^2 \vec{g}_i$, $B_l$ for $i,l=1,...,m$ (which,
for ease of presentation, we do not detail here).

By freezing  the integrand of stochastic integral at $r=t_n$ and dropping the deterministic integral, one obtains the approximation
\begin{equation}
\SG{s,t_n} ^{-1}  \vec{g}_i(\vec{u} (s))= \vec{g}_i(\vec{u} (t_n))+\sum_{l=1}^m \int _{t_n} ^s   \vec{H}_{i,l} (\vec{u}(t_n)) dW_l(r)+h.o.t 
\end{equation}

Using this approximation, we  obtain the Milstein scheme \expint{\Mil}
\begin{multline}  \label{eq:MilEI0}
  \vec{u}_{n+1}=\SG{t_{n+1},t_n}\left( \vec{u}_n+ \tilde{\vec{f}}
    (\vec{u}_n) \Delta t +   \sum _{i=1} ^m \vec{g}_i(\vec{u}_n
    )\Delta W_{i,n} \right.\\ 
\left. +\sum_{i=1}^m \sum_{l=1}^m    \vec{H}_{i,l} (\vec{u}_n) \int_{t_n}^{t_{n+1}} \int_{t_n}^{s} dW_l(r)dW_i(s) \right).
\end{multline}
We can also introduce a Milstein homotopy type scheme \expint{HomMI0}
by applying \expint{\Mil} to \eqref{eq:Eqhom}.

\section{Convergence result and numerical examples}
\label{sec:Mainresults}
We state in this section the strong convergence result for
both \ETDint{EI0} and \ETDint{\Mil}. Proofs are given in
\secref{sec:proofs} and we note that the proofs for the other schemes,
including those such as \eqref{eq:HomEI0}, are similar.  
For these proofs we assume a global Lipschitz condition on the drift and
diffusion. Tamed version of the methods for more general drift and
diffusions can be derived \cite{ErdoganLordTaming}. We let $\|\cdot\|_2$ denote the standard Euclidean norm and $\Lnorm{\cdot}^2=\eval{\norm{\cdot}^2}$.

\begin{assumption} 
  \label{ass:1}
  There exists a constant $L>0$ such that the linear growth condition
  holds: for $\vec{u} \in \mathbb{R}^d $ and $i=1,\hdots, m$
  $$
  \norm{\vec{F}(\vec{u})}^2 \leq  L(1+\norm{\vec{u}}^2), \qquad 
  \norm{\vec{g}_i (\vec{u})}^2 \leq  L(1+\norm{\vec{u}}^2), %\quad i=1,\hdots, m
  $$
  and the global Lipschitz condition holds: for $\vec{u},\vec{v} \in
  \mathbb{R}^d$, $i=1,\hdots, m$
  $$
  \norm{\vec{F}(\vec{u})-\vec{F}(\vec{v})} \leq L\norm{\vec{u}-\vec{v}
  }, \qquad 
  \norm{\vec{g}_i(\vec{u})-\vec{g}_i(\vec{v})} \leq L
  \norm{\vec{u}-\vec{v}}   
  %\quad i=1,\hdots, m  
  $$
\end{assumption}
First we state the strong convergence result for the Euler type  scheme
\expint{EIO}.
\begin{theorem*}
\label{thrm:1}
Let  Assumptions \ref{ass:1}  hold and let $\vec{u}_n$
be  approximation to the solution of \eqref{eq:EqABg} using
\expint{EI0}. For $T>0$, there exists $K>0$ such that 
\begin{equation}
  \sup_{0\leq t_n\leq T} \Lnorm{\vec{u}(t_n) - \vec{u}_n}  \leq K \Delta t ^{1/2}.
\end{equation} 
\end{theorem*}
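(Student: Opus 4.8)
The plan is the standard one-step-error / discrete Gronwall argument, carried out with extra care for the random prefactor $\SG{t_{n+1},t_n}$ that multiplies the local increments in \expint{EI0}; throughout, $C$ is a generic constant and $\Dt$ is small. First I would collect the preliminaries. Under Assumption \ref{ass:1} one has the moment bounds $\sup_{0\le t\le T}\eval{\norm{\vec u(t)}^{2p}}<\infty$ and $\sup_{0\le t_n\le T}\eval{\norm{\vec u_n}^{2p}}<\infty$ for every $p\ge1$; the latter follows on noticing that one step of \expint{EI0} is the time-$t_{n+1}$ value, started from $\vec u_n$ at $t_n$, of the linear SDE $d\vec y=\bigl(A\vec y+\SG{s,t_n}\vec F(\vec u_n)\bigr)\,ds+\sum_{i=1}^m\bigl(B_i\vec y+\SG{s,t_n}\vec g_i(\vec u_n)\bigr)\,dW_i(s)$ on $[t_n,t_{n+1}]$ (the commutator conditions and $\tilde{\vec f}=\vec F-\sum_i B_i\vec g_i$ are used to derive this identity), to which the usual a priori estimate applies. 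One also has the H\"older bound $\eval{\norm{\vec u(t)-\vec u(s)}^{2}}\le C|t-s|$. From the SDEs satisfied by $\SG{\cdot,t_n}$ and $\SG{\cdot,t_n}^{-1}$, It\^o, and Burkholder--Davis--Gundy (BDG), the flow increments are small in mean square,
\begin{equation*}
\eval{\Fnorm{\SG{s,t_n}^{-1}-I_d}^{2}}\le C(s-t_n),\qquad\eval{\Fnorm{\SG{t_{n+1},t_n}^{\top}\SG{t_{n+1},t_n}-I_d}^{2}}\le C\,\Dt .
\end{equation*}
Finally, $\SG{t_{n+1},t_n}$ depends only on the increments $W_i(t_{n+1})-W_i(t_n)$, hence is independent of $\mathcal F_{t_n}$, and applying It\^o to $\norm{\SG{t,t_n}\vec v}^{2p}$ and Gronwall gives the sharp conditional bound $\eval{\norm{\SG{t_{n+1},t_n}\vec v}^{2p}\mid\mathcal F_{t_n}}\le e^{C\Dt}\norm{\vec v}^{2p}$ for every $\mathcal F_{t_n}$-measurable $\vec v$; this is sharper than any bound on $\eval{\Fnorm{\SG{t_{n+1},t_n}}^{2p}}$ (which does not tend to $1$) and is what eventually produces a Gronwall constant $1+C\Dt$.

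Next I would set $\vec e_n=\vec u(t_n)-\vec u_n$, so $\vec e_0=0$, and subtract \expint{EI0} from the exact identity \eqref{eq:exact} to get $\vec e_{n+1}=\SG{t_{n+1},t_n}\bigl(\vec e_n+D_n+M_n\bigr)$, where $D_n=\int_{t_n}^{t_{n+1}}\bigl(\SG{s,t_n}^{-1}\tilde{\vec f}(\vec u(s))-\tilde{\vec f}(\vec u_n)\bigr)\,ds$ and $M_n=\sum_{i=1}^m\int_{t_n}^{t_{n+1}}\bigl(\SG{s,t_n}^{-1}\vec g_i(\vec u(s))-\vec g_i(\vec u_n)\bigr)\,dW_i(s)$. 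In each integrand I would split the bracket (with $\phi\in\{\tilde{\vec f},\vec g_i\}$) into $(\SG{s,t_n}^{-1}-I_d)\,\phi(\vec u(s))$ (flow correction), $\phi(\vec u(s))-\phi(\vec u(t_n))$ (time regularity), and $\phi(\vec u(t_n))-\phi(\vec u_n)$ (Lipschitz: $\mathcal F_{t_n}$-measurable, bounded by $L\norm{\vec e_n}$). Feeding the two flow estimates, the H\"older bound, Assumption \ref{ass:1}, the conditional It\^o isometry for $M_n$, and Cauchy--Schwarz in time for $D_n$ (with the analogous fourth-moment versions via BDG) into these splittings yields, with $\mathcal F_{t_n}$-measurable right-hand sides,
\begin{equation*}
\eval{\norm{M_n}^{2}\mid\mathcal F_{t_n}}\le C\Dt^{2}\bigl(1+\norm{\vec u_n}^{2}\bigr)+C\Dt\,\norm{\vec e_n}^{2},\qquad\eval{\norm{D_n}^{2}\mid\mathcal F_{t_n}}\le C\Dt^{3}\bigl(1+\norm{\vec u_n}^{2}\bigr)+C\Dt^{2}\norm{\vec e_n}^{2},
\end{equation*}
and $\eval{\norm{D_n+M_n}^{4}\mid\mathcal F_{t_n}}\le C\Dt^{4}\bigl(1+\norm{\vec u_n}^{4}+\norm{\vec e_n}^{4}\bigr)+C\Dt^{2}\norm{\vec e_n}^{4}$.

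To assemble, I would expand
\begin{equation*}
\norm{\vec e_{n+1}}^{2}=\norm{\SG{t_{n+1},t_n}\vec e_n}^{2}+\norm{\SG{t_{n+1},t_n}(D_n+M_n)}^{2}+2\bigl\langle\SG{t_{n+1},t_n}\vec e_n,\ \SG{t_{n+1},t_n}(D_n+M_n)\bigr\rangle
\end{equation*}
and take $\eval{\cdot\mid\mathcal F_{t_n}}$ first and $\eval{\cdot}$ only afterwards; this order is essential, since it keeps $\norm{\vec e_n}$ frozen so that only $\eval{\norm{\vec e_n}^{2}}$ --- never a higher power --- enters the final recursion, after which $\sup_n\eval{\norm{\vec u_n}^{2p}}<\infty$ turns the $1+\norm{\vec u_n}^{2}$ and $1+\norm{\vec u_n}^{4}$ factors into constants. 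The first term is $\le e^{C\Dt}\eval{\norm{\vec e_n}^{2}}$ by the sharp prefactor bound; the second is $\le C\Dt^{2}+C\Dt\,\eval{\norm{\vec e_n}^{2}}$ by conditional Cauchy--Schwarz, $\eval{\Fnorm{\SG{t_{n+1},t_n}}^{4}}<\infty$, and the fourth-moment bound. The cross term is the delicate one: I would write $\SG{t_{n+1},t_n}^{\top}\SG{t_{n+1},t_n}=I_d+\bigl(\SG{t_{n+1},t_n}^{\top}\SG{t_{n+1},t_n}-I_d\bigr)$; against $M_n$ the $I_d$-part vanishes in conditional expectation by adaptedness, and the remainder is handled by pairing $\eval{\Fnorm{\SG{t_{n+1},t_n}^{\top}\SG{t_{n+1},t_n}-I_d}^{2}\mid\mathcal F_{t_n}}^{1/2}=\mathcal O(\Dt^{1/2})$ with $\eval{\norm{M_n}^{2}\mid\mathcal F_{t_n}}^{1/2}$, while against $D_n$ the $I_d$-part gives $\eval{\vec e_n^{\top}\eval{D_n\mid\mathcal F_{t_n}}}$, controlled by the conditional $\norm{D_n}^{2}$-bound, and the remainder as for $M_n$. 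A Young inequality then bounds the cross term by $C\Dt^{2}+C\Dt\,\eval{\norm{\vec e_n}^{2}}$, and collecting all three contributions gives $\eval{\norm{\vec e_{n+1}}^{2}}\le(1+C\Dt)\,\eval{\norm{\vec e_n}^{2}}+C\Dt^{2}$. Since $\vec e_0=0$, the discrete Gronwall lemma yields $\sup_{0\le t_n\le T}\eval{\norm{\vec e_n}^{2}}\le C\Dt\,e^{CT}$, which is precisely $\sup_{0\le t_n\le T}\Lnorm{\vec u(t_n)-\vec u_n}\le K\Dt^{1/2}$.

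The step I expect to be the main obstacle --- and the reason the proof does not simply copy the one for \ETDint{SETD0} --- is the random prefactor $\SG{t_{n+1},t_n}$ in front of the local stochastic error $M_n$: because it is correlated with the Brownian increments inside $M_n$, the usual orthogonality of $\vec e_n$ to the local martingale error is unavailable and must be recovered through the splitting $I_d+(\SG{t_{n+1},t_n}^{\top}\SG{t_{n+1},t_n}-I_d)$, the $\mathcal O(\Dt^{1/2})$ mean-square control of $\SG{t_{n+1},t_n}^{\top}\SG{t_{n+1},t_n}-I_d$ and $\SG{s,t_n}^{-1}-I_d$, and the sharp conditional moment bound for $\SG{t_{n+1},t_n}$ rather than a crude operator-norm estimate. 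These same ingredients handle the other Euler schemes, and when $\vec g_i\equiv0$ (so that $M_n$ collapses to the flow-correction term) they improve the local error to $\mathcal O(\Dt^{3})$ and hence the strong rate to $\mathcal O(\Dt)$.
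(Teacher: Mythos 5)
Your argument is correct, but it follows a genuinely different route from the paper's. The paper does not work with a one-step recursion at all: it first proves a lemma bounding the mean square of the \emph{accumulated} local error $\sum_{k}\SG{t_N,t_{k+1}}R_{EI0}(t_{k+1},t_k,\vec{u}(t_k))$ by $\mathcal{O}(\Dt)$, splitting each residual into a flow-correction part (using $\eval{\norm{(\SG{s,t_k}^{-1}-I)\vec v}^2}\le K|s-t_k|$, as you do) and a time-regularity part, handling the drift sums by Cauchy--Schwarz over $k$ and the diffusion sums by a single global It\^o isometry after factoring $\SG{t_N,t_k}=\SG{t_N,0}\SG{t_k,0}^{-1}$ via the commutator conditions; it then introduces a continuous-time interpolant $\vec u_{\Dt}(t)$ of the scheme, writes the exact solution as the same interpolated integral equation plus that accumulated remainder, and closes with a continuous Gronwall inequality. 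Your proof instead freezes the error at the level of one step, $\vec e_{n+1}=\SG{t_{n+1},t_n}(\vec e_n+D_n+M_n)$, and confronts head-on the correlation between the random prefactor and the local martingale increment via the independence of $\SG{t_{n+1},t_n}$ from $\mathcal F_{t_n}$, the sharp conditional bound $\eval{\norm{\SG{t_{n+1},t_n}\vec v}^{2}\mid\mathcal F_{t_n}}\le e^{C\Dt}\norm{\vec v}^{2}$, and the splitting $\SG{}^{\top}\SG{}=I_d+\mathcal O(\Dt^{1/2})$, finishing with discrete Gronwall. The trade-offs: the paper's global-summation argument needs only second moments of the solution (Proposition \ref{Prop:1}) and transfers verbatim to the Milstein lemma, whereas your route requires fourth (and, through BDG, higher) moments of both $\vec u(t)$ and $\vec u_n$ and a separate moment bound for the scheme; in return, your conditional-expectation machinery makes fully explicit how the random semigroup is decoupled from the stochastic integrals --- a point the paper passes over by invoking ``boundedness of $\SG{}$'' --- and the improved order-one rate of Corollary \ref{cor:1} for $\vec g_i\equiv 0$ falls out of the same recursion with no extra work.
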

For the Milstein scheme \expint{\Mil}, we impose the following two
extra assumptions.
\begin{assumption} 
  \label{ass:2}
  The functions  $ F,\vec{g}_i: \mathbb{R}^d \rightarrow
  \mathbb{R}^d $  are twice  continuously  differentiable.  
\end{assumption} 
\begin{assumption} 
  \label{ass:3}
  For the same constant $L$ as in Assumption \ref{ass:1} 
  for $\vec{u},\vec{v} \in \mathbb{R}^d$, $i,l=1,\hdots, m$
  $$
  \norm{D\vec{g}_i(\vec{u})\vec{g}_l(\vec{u})-D\vec{g}_i(\vec{v})\vec{g}_l(\vec{v}) } \leq L
  \norm{\vec{u}-\vec{v}}
  %\quad i,l=1,\hdots, m  
  $$
and  
   $$
  \norm{D\vec{g}_i(\vec{u}) B_l \vec{u} - D\vec{g}_i(\vec{v})B_l \vec{v} } \leq L
  \norm{\vec{u}-\vec{v}}.
  %\quad i,l=1,\hdots, m  
  $$
\end{assumption}

\begin{theorem*}
\label{thrm:2}
Let  Assumptions \ref{ass:1}, \ref{ass:2} and \ref{ass:3} hold and let
$\vec{u}_n$ 
be  approximation to the solution of \eqref{eq:EqABg} using
\expint{\Mil}. For $T>0$, there exists $K>0$ such that 
\begin{equation}
  \sup_{0\leq t_n\leq T} \Lnorm{\vec{u}(t_n) - \vec{u}_n}  \leq K \Delta t. %^{1}.
\end{equation} 
\end{theorem*}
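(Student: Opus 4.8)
The plan is to deduce the global bound from one-step error estimates via the fundamental theorem of mean-square convergence for one-step methods: once \expint{\Mil} is shown to have uniformly bounded moments and its one-step error, measured from a common starting value against the exact solution of \eqref{eq:EqABg}, is $\mathcal{O}(\Dt^{2})$ in $\mathcal{F}_{t_n}$-conditional mean and $\mathcal{O}(\Dt^{3/2})$ in $\mathcal{F}_{t_n}$-conditional mean-square, the asserted rate follows. First I would record the tools used throughout. Under Assumption~\ref{ass:1} the solution of \eqref{eq:EqABg} has bounded moments of all orders and satisfies $\Lnorm{\vec{u}(t)-\vec{u}(s)}^{2}\le C|t-s|$. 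The random propagator $\SG{t_{n+1},t_n}$ is independent of $\mathcal{F}_{t_n}$; since $A$ and the $B_i$ commute, $\norm{\SG{t_{n+1},t_n}}\le e^{c\Dt}\prod_{i}e^{\norm{B_i}|\Delta W_{i,n}|}$, whence $\mathbb{E}\norm{\SG{t_{n+1},t_n}}^{p}\le 1+C_p\Dt$ and $\mathbb{E}\norm{\SG{t_{n+1},t_n}-I_d}^{p}\le C_p\Dt^{p/2}$ for every $p$; moreover $\mathbb{E}[\SG{t_{n+1},t_n}]=e^{\Dt A}$, and by the flow identity $\SG{t_{n+1},t_n}=\SG{t_{n+1},s}\SG{s,t_n}$ the factor $\SG{t_{n+1},s}$ is independent of $\mathcal{F}_s$ with $\mathbb{E}[\SG{t_{n+1},s}]=e^{(t_{n+1}-s)A}$. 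The uniform moment bound $\sup_n\mathbb{E}\norm{\vec{u}_n}^{2p}\le C_p$ then follows from a discrete Gronwall estimate applied to \eqref{eq:MilEI0}; this already uses Assumption~\ref{ass:3} (through \eqref{eq:H}) to bound the $\vec{H}_{i,l}$ terms, and requires care since, when conditioning on $\mathcal{F}_{t_n}$, the factor $\SG{t_{n+1},t_n}$ cannot be separated from the Brownian increments it multiplies.

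For the one-step error, fix $\vec{u}_n$, let $\bar{\vec{u}}(\cdot)$ solve \eqref{eq:EqABg} on $[t_n,t_{n+1}]$ with $\bar{\vec{u}}(t_n)=\vec{u}_n$, and subtract \eqref{eq:MilEI0} from the exact representation \eqref{eq:exact} on this subinterval. Both carry the prefactor $\SG{t_{n+1},t_n}$, so
\begin{equation*}
\bar{\vec{u}}(t_{n+1})-\vec{u}_{n+1}=\SG{t_{n+1},t_n}\bigl(\mathcal{E}_n^{\mathrm{dr}}+\mathcal{E}_n^{\mathrm{di}}\bigr),
\end{equation*}
where $\mathcal{E}_n^{\mathrm{dr}}=\int_{t_n}^{t_{n+1}}\bigl(\SG{s,t_n}^{-1}\tilde{\vec{f}}(\bar{\vec{u}}(s))-\tilde{\vec{f}}(\vec{u}_n)\bigr)\,ds$ and $\mathcal{E}_n^{\mathrm{di}}=\sum_{i}\int_{t_n}^{t_{n+1}}\bigl(\SG{s,t_n}^{-1}\vec{g}_i(\bar{\vec{u}}(s))-\vec{g}_i(\vec{u}_n)-\sum_{l}\vec{H}_{i,l}(\vec{u}_n)\int_{t_n}^{s}dW_l(r)\bigr)\,dW_i(s)$. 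Substituting the Ito--Taylor expansion \eqref{eq:expansion} for $\SG{s,t_n}^{-1}\vec{g}_i(\bar{\vec{u}}(s))$ rewrites the integrand of $\mathcal{E}_n^{\mathrm{di}}$ as $\sum_{l}\int_{t_n}^{s}\bigl(\SG{r,t_n}^{-1}\vec{H}_{i,l}(\bar{\vec{u}}(r))-\vec{H}_{i,l}(\vec{u}_n)\bigr)\,dW_l(r)+\int_{t_n}^{s}\SG{r,t_n}^{-1}\vec{Q}_i(\bar{\vec{u}}(r))\,dr$, i.e.\ exactly the terms dropped in deriving \expint{\Mil}.

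For the conditional mean-square error I would split each integrand (with $\psi$ one of $\tilde{\vec{f}}$, $\vec{H}_{i,l}$; the purely deterministic $\vec{Q}_i$-term is treated directly) as $\SG{r,t_n}^{-1}\psi(\bar{\vec{u}}(r))-\psi(\vec{u}_n)=\bigl(\SG{r,t_n}^{-1}-I_d\bigr)\psi(\bar{\vec{u}}(r))+\bigl(\psi(\bar{\vec{u}}(r))-\psi(\vec{u}_n)\bigr)$ and estimate the pieces using, in turn: the bound $\mathbb{E}\norm{\SG{r,t_n}^{-1}-I_d}^{p}\le C_p(r-t_n)^{p/2}$; the linear growth of $\tilde{\vec{f}},\vec{g}_i,\vec{H}_{i,l},\vec{Q}_i$ (the last two from \eqref{eq:H} and Assumptions~\ref{ass:1}--\ref{ass:3}); the Lipschitz bounds of Assumptions~\ref{ass:1} and~\ref{ass:3} together with the one-step regularity estimate $\Lnorm{\bar{\vec{u}}(r)-\vec{u}_n}^{2}\le C(r-t_n)(1+\norm{\vec{u}_n}^{2})$; and the Ito isometry, the Burkholder--Davis--Gundy inequality on the iterated stochastic integrals, and Holder's inequality on the time integrals. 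This gives that $\mathcal{E}_n^{\mathrm{dr}}+\mathcal{E}_n^{\mathrm{di}}$ is $\mathcal{O}(\Dt^{3/2})$ in $L^{4}(\Omega)$ with constant $\le C(1+\norm{\vec{u}_n})$, and, together with $\mathbb{E}\norm{\SG{t_{n+1},t_n}}^{4}\le C$ and Cauchy--Schwarz, this yields the $\mathcal{O}(\Dt^{3/2})$ conditional mean-square local error.

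The conditional mean error, which must gain the further power $\Dt^{2}$, is the main obstacle: it needs cancellation, whereas $\SG{t_{n+1},t_n}$ is not $\mathcal{F}_{t_n}$-measurable and carries the same increments as $\mathcal{E}_n$. For the diffusion part I would write $\SG{t_{n+1},t_n}=I_d+\bigl(\SG{t_{n+1},t_n}-I_d\bigr)$; the $I_d$-term is an Ito integral over $[t_n,t_{n+1}]$ with adapted integrand and so has zero $\mathcal{F}_{t_n}$-conditional mean, while the remaining term is bounded in conditional $L^{1}$ by $\bigl(\mathbb{E}\norm{\SG{t_{n+1},t_n}-I_d}^{2}\bigr)^{1/2}\bigl(\mathbb{E}[\norm{\mathcal{E}_n^{\mathrm{di}}}^{2}\mid\mathcal{F}_{t_n}]\bigr)^{1/2}=\mathcal{O}(\Dt^{1/2})\cdot\mathcal{O}(\Dt^{3/2})=\mathcal{O}(\Dt^{2})$. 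For the drift part I would use the flow identity to write $\SG{t_{n+1},t_n}\mathcal{E}_n^{\mathrm{dr}}=\int_{t_n}^{t_{n+1}}\bigl(\SG{t_{n+1},s}\tilde{\vec{f}}(\bar{\vec{u}}(s))-\SG{t_{n+1},t_n}\tilde{\vec{f}}(\vec{u}_n)\bigr)\,ds$; taking the $\mathcal{F}_{t_n}$-conditional mean and using the independence of $\SG{t_{n+1},s}$ from $\mathcal{F}_s$ and $\mathbb{E}[\SG{t_{n+1},s}]=e^{(t_{n+1}-s)A}$, the integrand becomes $e^{(t_{n+1}-s)A}\,\mathbb{E}[\tilde{\vec{f}}(\bar{\vec{u}}(s))-\tilde{\vec{f}}(\vec{u}_n)\mid\mathcal{F}_{t_n}]+\bigl(e^{(t_{n+1}-s)A}-e^{\Dt A}\bigr)\tilde{\vec{f}}(\vec{u}_n)$; the second summand is $\mathcal{O}(\Dt)(1+\norm{\vec{u}_n})$, and by a first-order Taylor expansion of $\tilde{\vec{f}}$ (Assumption~\ref{ass:2}) together with $\norm{\mathbb{E}[\bar{\vec{u}}(s)-\vec{u}_n\mid\mathcal{F}_{t_n}]}\le C(s-t_n)(1+\norm{\vec{u}_n})$ so is the first, so integration in $s$ produces $\mathcal{O}(\Dt^{2})$. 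With the one-step orders $\mathcal{O}(\Dt^{2})$ (mean) and $\mathcal{O}(\Dt^{3/2})$ (mean-square) and the moment bound in hand, the fundamental theorem gives $\sup_{0\le t_n\le T}\Lnorm{\vec{u}(t_n)-\vec{u}_n}\le K\Dt$. Throughout, the genuinely delicate point is managing the interaction between the non-adapted propagator $\SG{t_{n+1},t_n}$ and the noise it multiplies, which is precisely what forces the use of the flow identity, of the explicit expectations $\mathbb{E}[\SG{t_{n+1},s}]=e^{(t_{n+1}-s)A}$, and of the $I_d+(\SG{t_{n+1},t_n}-I_d)$ splitting; the argument for \expint{HomMI0} is identical, with $p$-weighted coefficients.
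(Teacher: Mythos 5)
Your plan is correct in outline, but it follows the classical Milstein--Tretyakov route (fundamental theorem of mean-square convergence: conditional-mean local error $\mathcal{O}(\Dt^{2})$, conditional mean-square local error $\mathcal{O}(\Dt^{3/2})$, plus uniform moment bounds for the numerical iterates), whereas the paper never proves a conditional-mean estimate at all. The paper instead bounds the $L^2$ norm of the \emph{accumulated, propagated} local error $\sum_{k}\SG{t_N,t_{k+1}}R_{\MIL}(t_{k+1},t_k,\vec{u}(t_k))$ directly (Lemma \ref{lemma:2}): after pulling the propagators through via $\SG{t_N,t_k}=\SG{t_N,0}\SG{t_k,0}^{-1}$, the martingale parts of the local errors over distinct subintervals are mutually orthogonal, so the It\^{o} isometry applied to the whole sum turns $N$ terms of mean-square size $\mathcal{O}(\Dt^{3})$ into $\mathcal{O}(\Dt^{2})$ rather than $N^{2}\cdot\mathcal{O}(\Dt^{3})$; the same orthogonality device (the terms $\Theta_k$ in the paper's estimate of $I_a$) handles the leading stochastic part of the drift local error. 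This is precisely the global counterpart of your conditional-mean cancellation, packaged differently. The paper then compares the exact solution with a continuous-time interpolant of the scheme and closes with Gronwall using only Lipschitz differences, so that moments of the numerical solution are never needed: local errors are always evaluated along the exact solution, whose moments come from Proposition \ref{Prop:1}. Your route buys a modular, reusable statement (the two one-step orders); the paper's route buys freedom from the numerical moment bound, which is the most delicate ingredient of your plan precisely because $\SG{t_{n+1},t_n}$ is not $\mathcal{F}_{t_n}$-measurable. Your devices for the non-adapted propagator --- the splitting $I_d+(\SG{t_{n+1},t_n}-I_d)$, the flow identity with $\SG{t_{n+1},s}$ independent of $\mathcal{F}_s$ and $\eval{\SG{t_{n+1},s}}=e^{(t_{n+1}-s)A}$ --- are sound and close in spirit to the paper's manipulations.

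One step you should tighten: the claim $\eval{\norm{\SG{t_{n+1},t_n}}^{p}}\le 1+C_p\Dt$ does not follow from $\norm{\SG{t_{n+1},t_n}}\le e^{c\Dt}\prod_i e^{\norm{B_i}\vert\Delta W_{i,n}\vert}$, because $\eval{e^{a\vert\Delta W\vert}}=1+a\sqrt{2\Dt/\pi}+\mathcal{O}(\Dt)$; that route only yields $1+C\sqrt{\Dt}$, which is useless when compounded over $N=T/\Dt$ steps. The correct per-step stability $\eval{\norm{\SG{t_{n+1},t_n}\vec{v}}^{2p}}\le e^{C\Dt}\norm{\vec{v}}^{2p}$ follows instead from applying It\^{o}'s formula to $\norm{\vec{Y}}^{2p}$ along the linear equation \eqref{eq:Homogen}; combined with the care you already flag for the cross terms between $\SG{t_{n+1},t_n}$ and the increments it multiplies, this repairs the moment bound. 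Note also that both your conditional-mean estimate and the paper's claim $R_{\tilde{f}}=\mathcal{O}(s-t_k)$ implicitly require growth control on second derivatives beyond the bare $C^2$ of Assumption \ref{ass:2}, so you are no worse off than the paper on that point.
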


Note that from the definition of $\tilde{\vec{f}}$ in
\eqref{eq:def_of_functions} and $\vec{H}_{i,l}$ in \eqref{eq:H}, these functions also  satisfy  global Lipschitz
and/or  continuously differentiability  conditions when  the corresponding assumptions on $\vec{F}$, $\vec{g}_i$ and $D\vec{g}_i$ hold. 
We give the proofs of both these Theorems in \secref{sec:proofs}. 

Now consider the special case when $\vec{g}_i\equiv0$  in 
\eqref{eq:EqABg}. Namely, we have the SDE  
\begin{equation} \label{eq:Eq0}
    d\vec{u}= \left( A \vec{u}+\bf{F}(\vec{u}) \right) dt + \sum
    _{i=1} ^m   B_i \vec{u}   dW_i (t), \qquad \vec{u} (0)=\vec{u}_0 \in  \mathbb{R} ^d
\end{equation}   
for which both the numerical schemes \expint{EI0} and \expint{\Mil} reduce to %\expint{EIF0}
\begin{equation}  \label{eq:EIF0}
  \vec{u}_n=\SG{t_{n+1},t_n}\left( \vec{u}_n+ \vec{F} (\vec{u}_n) \Delta t \right).
\end{equation}
Remark that we can consider \eqref{eq:EIF0} as a Lie Trotter splitting of
\eqref{eq:Eq0}. 
It is straightforward to conclude the following improvement in the
convergence rate for  \expint{EI0}.
\begin{corollary}
\label{cor:1}
Let  Assumption \ref{ass:1}  and continuously differentiability condition hold for
$\vec{F}$ and let  $\vec{u}_n$ denote the 
approximation to the solution of \eqref{eq:Eq0} by \eqref{eq:EIF0}. 
For $T>0$, there exists $K>0$ such that 
\begin{equation}
\sup_{0\leq t_n\leq T} \Lnorm{\vec{u}(t_n) - \vec{u}_n}  \leq K \Delta t .
\end{equation}
\end{corollary}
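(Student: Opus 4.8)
The plan is to reduce to the already-stated results and then give a short direct argument. The key structural observation is that for \eqref{eq:Eq0} we have $\vec{g}_i\equiv0$, hence $\tilde{\vec{f}}\equiv\vec{F}$ and the Milstein correction $\vec{H}_{i,l}$ of \eqref{eq:H} vanishes identically, so \emph{both} \expint{EI0} and \expint{\Mil} collapse to \eqref{eq:EIF0} and Assumption~\ref{ass:3} becomes vacuous. Thus \eqref{eq:EIF0} is exactly \expint{\Mil} applied to \eqref{eq:Eq0}, and Corollary~\ref{cor:1} is the corresponding instance of \thref{thrm:2}: the quickest proof is to invoke \thref{thrm:2}, whose argument in \secref{sec:proofs} specialises with every $\vec{g}_i$- and $\vec{H}_{i,l}$-term deleted. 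I would nonetheless spell out the direct estimate, since it shows transparently why the rate jumps from $\Dt^{1/2}$ to $\Dt$ here. (Equivalently one may read \eqref{eq:EIF0} as the composition of the exact flow of $d\vec{v}=A\vec{v}\,dt+\sum_iB_i\vec{v}\,dW_i$ with one explicit Euler step of $\dot{\vec{w}}=\vec{F}(\vec{w})$, i.e. a Lie--Trotter split in which the noisy subsystem is solved exactly.)

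Subtracting \eqref{eq:EIF0} from \eqref{eq:exact} — which, as $\vec{g}_i\equiv0$, carries no stochastic integral and reads $\vec{u}(t_{n+1})=\SG{t_{n+1},t_n}\bigl(\vec{u}(t_n)+\int_{t_n}^{t_{n+1}}\SG{s,t_n}^{-1}\vec{F}(\vec{u}(s))\,ds\bigr)$ — I set $\vec{e}_n:=\vec{u}(t_n)-\vec{u}_n$ and obtain
\[
  \vec{e}_{n+1}=\SG{t_{n+1},t_n}\bigl(\vec{e}_n+\vec{r}_n\bigr),\qquad
  \vec{r}_n=\int_{t_n}^{t_{n+1}}\bigl(\SG{s,t_n}^{-1}\vec{F}(\vec{u}(s))-\vec{F}(\vec{u}_n)\bigr)\,ds .
\]
I then split $\vec{r}_n$ into a stability term $\Dt(\vec{F}(\vec{u}(t_n))-\vec{F}(\vec{u}_n))$, a ``semigroup'' term $\int_{t_n}^{t_{n+1}}(\SG{s,t_n}^{-1}-I_d)\vec{F}(\vec{u}(t_n))\,ds$, and a ``drift consistency'' term $\int_{t_n}^{t_{n+1}}\SG{s,t_n}^{-1}(\vec{F}(\vec{u}(s))-\vec{F}(\vec{u}(t_n)))\,ds$. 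The stability term is $\le L\Dt\norm{\vec{e}_n}$. For the semigroup term I apply the Itô formula, $d\SG{s,t_n}^{-1}=\SG{s,t_n}^{-1}(-A+\sum_iB_i^2)\,ds-\sum_i\SG{s,t_n}^{-1}B_i\,dW_i(s)$, followed by stochastic Fubini, splitting it into a drift piece of conditional $L^2$-size $\mathcal{O}(\Dt^2)$ and the Itô integral $-\sum_i\bigl(\int_{t_n}^{t_{n+1}}(t_{n+1}-r)\SG{r,t_n}^{-1}B_i\,dW_i(r)\bigr)\vec{F}(\vec{u}(t_n))$, which is $\mathcal{F}_{t_n}$-conditionally centred with conditional $L^2$-size $\mathcal{O}(\Dt^{3/2})$. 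For the drift consistency term I use the standard one-step bound $\eval{\norm{\vec{u}(s)-\vec{u}(t_n)}^2\mid\mathcal{F}_{t_n}}\le C(1+\norm{\vec{u}(t_n)}^2)(s-t_n)$ and the Lipschitz bound on $\vec{F}$ to get conditional $L^2$-size $\mathcal{O}(\Dt^{3/2})$, and for its conditional mean I replace $\SG{s,t_n}^{-1}$ by $I_d$ (cost $\mathcal{O}(\Dt^2)$) and Taylor-expand $\vec{F}(\vec{u}(s))-\vec{F}(\vec{u}(t_n))$ to first order: the linear part contributes $\mathcal{O}(\Dt^2)$ since $\eval{\vec{u}(s)-\vec{u}(t_n)\mid\mathcal{F}_{t_n}}=\mathcal{O}(s-t_n)$, and the second-order remainder is $\mathcal{O}(\Dt^2)$ under Assumption~\ref{ass:2}. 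The net one-step error therefore has conditional mean $\mathcal{O}(\Dt^2)$ and conditional mean-square $\mathcal{O}(\Dt^{3/2})$; the whole point of this special case is that the term $\sum_i\int_{t_n}^{t_{n+1}}\SG{s,t_n}^{-1}(\vec{g}_i(\vec{u}(s))-\vec{g}_i(\vec{u}(t_n)))\,dW_i(s)$ — of conditional mean-square size $\mathcal{O}(\Dt)$, which caps the order at $\Dt^{1/2}$ for general noise — is absent.

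To finish I condition on $\mathcal{F}_{t_n}$ and use $\eval{\SG{t_{n+1},t_n}^{\top}\SG{t_{n+1},t_n}\mid\mathcal{F}_{t_n}}\preceq(1+C\Dt)I_d$ (valid since $A+A^{\top}+\sum_iB_i^{\top}B_i$ is symmetric and bounded and $\SG{}^{\top}\SG{}$ solves a linear matrix SDE), the $\mathcal{F}_{t_n}$-measurability of $\vec{e}_n$ and of the stability term, the centredness of the Itô-integral remainders, and Cauchy--Schwarz together with Young's inequality on the cross terms (using $\SG{t_{n+1},t_n}=I_d+\mathcal{O}(\Dt^{1/2})$ and the a priori bound $\sup_n\eval{\norm{\vec{u}(t_n)}^2}<\infty$), to get $\eval{\norm{\vec{e}_{n+1}}^2\mid\mathcal{F}_{t_n}}\le(1+C\Dt)\norm{\vec{e}_n}^2+C\Dt^3(1+\eval{\norm{\vec{u}(t_n)}^2})$. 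Taking expectations, summing, and applying the discrete Gronwall inequality with $N\Dt=T$ and $\vec{e}_0=0$ yields $\sup_{0\le t_n\le T}\eval{\norm{\vec{e}_n}^2}\le K^2\Dt^2$.

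The main obstacle is the drift consistency term: what is needed is not merely its $\mathcal{O}(\Dt^{3/2})$ conditional mean-square size but the $\mathcal{O}(\Dt^2)$ size of its \emph{bias}, because it is the smallness of the bias — once the leading error is recognised as a centred Itô contribution — that lifts the global rate from $\Dt^{1/2}$ to $\Dt$. Controlling this bias is precisely where the argument of \thref{thrm:2} invokes the second-order Taylor expansion of $\vec{F}$ (Assumption~\ref{ass:2}); merely bounding the Taylor remainder by $2L\norm{\vec{u}(s)-\vec{u}(t_n)}$ is not enough to reach $\mathcal{O}(\Dt^2)$ for the bias. Consequently the cleanest route is to deduce Corollary~\ref{cor:1} directly from \thref{thrm:2} with the $\vec{g}_i$-dependent terms switched off, rather than re-running all the estimates by hand.
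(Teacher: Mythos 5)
Your main route is exactly the paper's: the paper proves Corollary \ref{cor:1} by observing that for \eqref{eq:Eq0} the scheme \eqref{eq:EIF0} coincides with \expint{\Mil} (since $\vec{H}_{i,l}\equiv 0$ and all $\vec{g}_i$-terms vanish) and then invoking \thref{thrm:2}, whose proof in \secref{sec:proofs} reduces in this case to the terms $I$ and $II$ of Lemma \ref{lemma:2}. So the reduction you recommend is the intended proof, and it is correct. Your supplementary direct argument is a genuinely different implementation of the same idea: the paper bounds the \emph{accumulated} remainder $\sum_k \SG{t_N,t_{k+1}}R(t_{k+1},t_k,\vec{u}(t_k))$ and extracts the extra half order from the orthogonality $\eval{\langle\Theta_k,\Theta_l\rangle}=0$ of the per-interval Wiener-increment contributions, whereas you run the classical one-step framework (conditional bias $\mathcal{O}(\Dt^2)$, conditional mean-square $\mathcal{O}(\Dt^{3/2})$, then conditioning, Young, and discrete Gronwall). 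The two are equivalent in substance --- your centredness of the It\^o remainders plays the role of the paper's orthogonality --- and your version makes the mechanism (smallness of the bias once the leading error is a centred martingale term) more transparent. One caveat worth flagging, which you half-notice: both your bias estimate and the paper's expansion of $\tilde{\vec{f}}$ with $R_{\tilde f}=\mathcal{O}(s-t_k)$ really use Assumption \ref{ass:2} (or at least a Lipschitz derivative of $\vec{F}$), whereas the corollary as stated only assumes $\vec{F}\in C^1$; this slight mismatch is present in the paper itself and is not a defect of your argument.
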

This is a simple consequence of solving the linear SDE exactly,
see \secref{sec:proofs}. 
 
\subsection{Numerical examples}
\label{sec:numexamples}
In this section we perform some numerical experiments to illustrate
and confirm the orders of the  proposed methods. For comparison
\ETDint{SETD0}, \ETDint{SETD1}, Exponential Milstein  \ETDint{ExpMIL}
\cite{expmil}, the classical Milstein \cite{Kloedenbook} are used as well as 
%as they have been shown to be more effective
%than 
the semi-implicit Euler--Maruyama scheme (EM). 
%employed because  of  their effectiveness compared to semi implicit Euler Murayama scheme.  

\subsection*{Example 1: Ginzburg-Landau Equation} 
Consider  the  one dimensional equation
\begin{equation}
\label{eq:SGL}
du(t)= \left( - u +\frac{\sigma}{2}u - u^3 \right)  dt+\sqrt{\sigma} u dW(t), \qquad u(0)=u_0
\end{equation}
that has exact solution \cite{Kloedenbook}
\begin{equation}
u(t)=\frac{u_0e^{-t+\sqrt{\sigma} W(t)} }{\sqrt{1+2u_0^2 \int _0 ^t  e^{-2s+2\sqrt{\sigma} W(s)}}}.
\end{equation}
%This exact  solution is employed  to compare  performance  of the
%proposed methods with \expint{SETD1} and Milstein type Exponential
%Integrator \cite{expmil}. However, i
It  should be noted that the drift term satisfies only  a one sided
global Lipschitz condition and our proposed schemes might need to be
tamed to guarantee strong convergence as in
\cite{hutzenthaler2015numerical}. Analysis of taming for these
schemes is considered in \cite{ErdoganLordTaming}.
Nevertheless, ordinary Monte Carlo simulations reveal the performance of
the new schemes and act as a benchmark for \ETDint{SETD1} (see also
\cite{expmil}). 
In this SDE \expint{\Mil} and \expint{HomMI0} both reduce to
\expint{EI0} and \expint{HomEI0}. We compare here the schemes 
\expint{EI0}, \expint{EI1}, \expint{EI2} and \expint{HomEI0}.
Note that \eqref{eq:SGL} is linear in the diffusion and
hence Corollary \ref{cor:1} holds and we expect first order convergence.
This is observed in \figref{fig:SGL} (a) where we see first order
convergence of the methods \expint{EI0}, \expint{EI1} and
\expint{EI2}. In \figref{fig:SGL} (b) we compare the efficiency of
the schemes and observe that \expint{EI0} is the most efficient.
For the other examples that we consider we now only show results for
\expint{EI0} and \expint{HomEI0}.
\begin{figure}[ht]
\begin{center}
(a) \hspace{.49\textwidth} (b) \\
\includegraphics[width=0.49\textwidth]{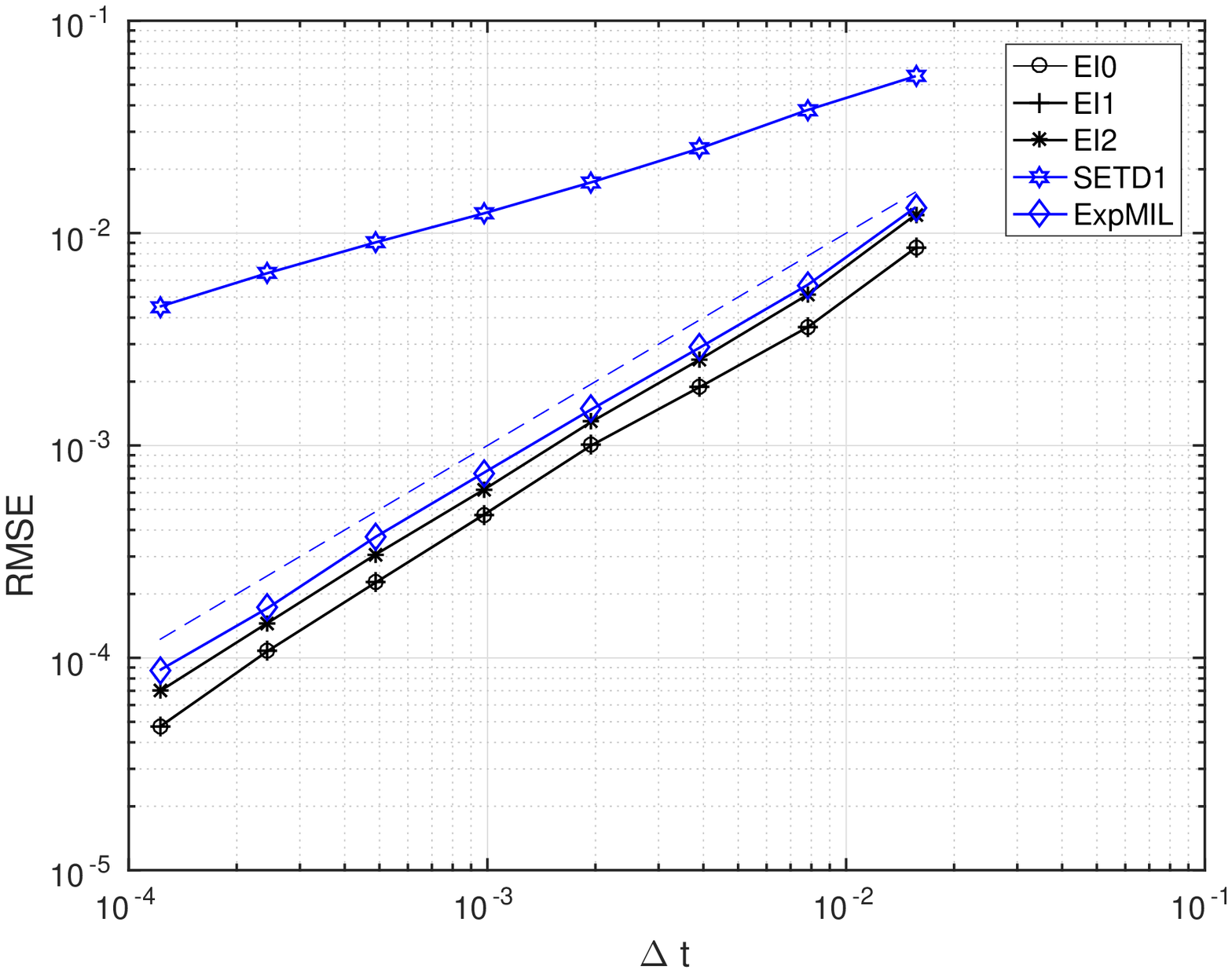}
\includegraphics[width=0.49\textwidth]{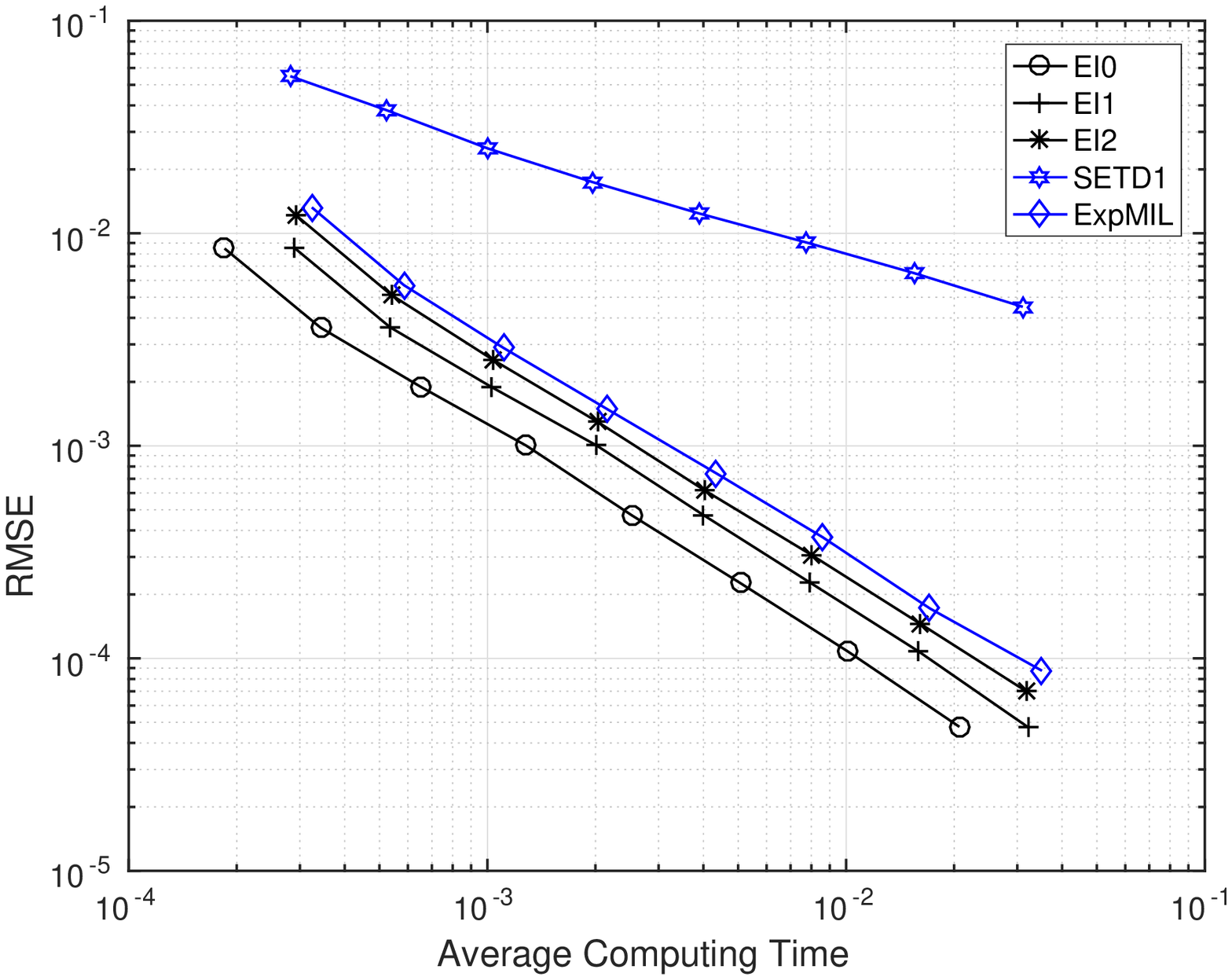}
\caption{Stochastic Ginzburg-Landau Equation \eqref{eq:SGL} with
  $\sigma=2$, $T=1$, $M=1000$ samples (a) root mean square error
  against $\Dt$. Also plotted is a reference line with slope 1.
  (b)  root mean square error against cputime. Of the
  new schemes \expint{EI0} is the most efficient.
  We observe the improved convergence rate of Corollary \ref{cor:1} in these new
  schemes over that for \ETDint{SETD1}.
}
\label{fig:SGL}
\end{center}
\end{figure}

\subsection*{Example  2: nonlinear and non-commutative noise}
Consider the following SDE in $\mathbb{R}^4$, with initial data $\vec{u}(0)= \left(1,1,1,1 \right) ^T$
\begin{equation} 
  \label{eq:SPDEexample}
    d\vec{u}= \left( r A \vec{u}+\bf{F}(\vec{u}) \right) dt +
    G(\vec{u}) d \vec{W}(t), \qquad  F_j=\frac{u_j}{1+\vert u_j
      \vert}, 
%\quad \vec{u}(0)= \left(1,1,1,1 \right) ^T,
\end{equation}
where $r$ is a constant (we take $r=4$) and $A$ arises from the
standard finite difference approximation of the Laplacian
\begin{equation}
  A=\begin{pmatrix}
    -2 & 1 & 0 & 0 \\
    1&-2&1&0\\
    0&1&-2&1  \\
    0&0&1&-2
  \end{pmatrix}. \
\end{equation}
As we do not have an exact solution in this
example we compute a reference solution using the exponential
Milstein method with a small step size $\Dt_{\text{ref}}$
%$=1/2^{20}$ (or $2^{14}$ for the non-commutative noise examples) 
and examine a Monte Carlo estimate of the
error $\Lnorm{\vec{u}(t_n) - \vec{u}_n}$ with $M$ %=1000$ 
realisations.% ($M=100$ for non-commutative noise examples).

%This matrix is studied in finite difference   discretization of
%Laplacian. We  will consider commutative  and  non commutative noise
%terms separately. 
%
\subsubsection*{Diagonal Noise}
First we look at diagonal noise and examine the effective of the noise
being dominated by either linear or nonlinear terms. 
For the nonlinear part we let  $g(u)=1/(1+u^2)$ and let $\vec{g}_i (\vec{u})$ have only one 
non-zero element $\alpha g(u_i)$ in the ith  entry for $\alpha
\in\mathbb{R}$.
For the linear part we take $B_i=\beta diag(\vec{e}_i)$ where
$\vec{e}_i$ is the ith unit vector of $\mathbb{R} ^4$ and $\beta\in\mathbb{R}$.
This gives $G(u)$ in \eqref{eq:SPDEexample} as
\begin{equation*}
  \label{eq:diagexample}
  G(\vec{u})=\begin{pmatrix}
    \beta u_1 +  \alpha g(u_1) & 0 & 0 \\
    0& \beta u_2 + \alpha  g(u_2) &0&0\\
    0&0& \beta u_3 + \alpha  g(u_3)&0 \\
    0&0&0&\beta u_4 + \alpha  g(u_4)
  \end{pmatrix} \ .
\end{equation*}
When $\alpha << \beta$ the linear terms $B_i$ dominate, whereas if
$\alpha >> \beta$, the nonlinearity $\vec g_i$ dominates.
By examining different $\alpha$ and $\beta$ we can see the effect of
the strength of the nonlinearity.
We take $\Dt_{\text{ref}}=2^{-20}$ and $M=1000$.
For \expint{HomEI0} and \expint{HomMI0} we define the homotopy parameter by
\begin{equation}
p=\dfrac{\mid \beta  \mid}{\mid \alpha \mid +\mid \beta \mid}.
\label{eq:hompar}
\end{equation}
A  matlab script to implement \expint{HomEI0} is presented in 
Algorithm \ref{alg:1}. 
We show results for the both the Euler and Milstein type schemes in
each case.
\begin{algorithm}
  \caption{Matlab script to solve \eqref{eq:SPDEexample} with noise
    given by \eqref{eq:diagexample} 
    using \expint{HomEI0} }
  \label{alg:1}
  \begin{lstlisting} 
  N=pow2(10);T=1.0;Dt=T/N;%number of steps,final time,step size
  d=4;m=4; % dimension of problem and dimension of noise  
  r=4;beta=1;alpha=0.1; % parameters  of the problem
  X=ones(d,1); %initial Condition
  p=abs(beta)/(abs(beta)+abs(alpha));%set homotopy parameter
  % Set Matrices 
  A=-r*sparse(toeplitz([2 -1 zeros(1, d-2)])); M1=expm(Dt*A); 
  % Set functions
  f=@(u) u./(1+abs(u)); g=@(u) 1./(1+u.^2);
  ftilde=@(u) f(u)-p*beta*(alpha*g(u)+(1-p)*beta*u);
  Gtilde=@(u) sparse(diag(alpha*g(u)+ (1-p)*beta*u));
  for n=1:N   % loop over time steps
    dW = sqrt(Dt)*randn(m,1);  % get increment for noise
    M2 = exp(-Dt*0.5*p^2*beta^2+p*beta*dW);
    X=M1*M2.*(X+Dt*ftilde(X)+Gtilde(X)*dW);  % update step
  end
\end{lstlisting}
\end{algorithm}
First consider the case where $\alpha=0.1$ and $\beta=1$ so that the
linear term dominate. \figref{fig:2} (a)  illustrates  orders and (b)
the efficiency  of the methods \expint{EI0}, \ETDint{SETD0}, \expint{HomEI0}.
In \figref{fig:2} (a) we see convergence with the predicted rate and
in \figref{fig:2} (b) it is clear that \expint{EI0} and
\expint{HomEI0} are more efficient than either \ETDint{SETD0} or the
semi-implicit Euler--Maruyama method (EM). (Recall that if $\beta=0$ then
we obtain first order convergence for  \expint{EI0} and
\expint{HomEI0} which is not the case for \ETDint{SETD0} or EM).
\figref{fig:2MIL} (a) shows first order convergence for the Milstein
schemes and from (b) we see that \expint{HomMI0} and \expint{MI0} are
the most efficient.
\begin{figure}[ht]
\begin{center}
(a) \hspace{.49\textwidth} (b) \\
\includegraphics[width=0.49\textwidth]{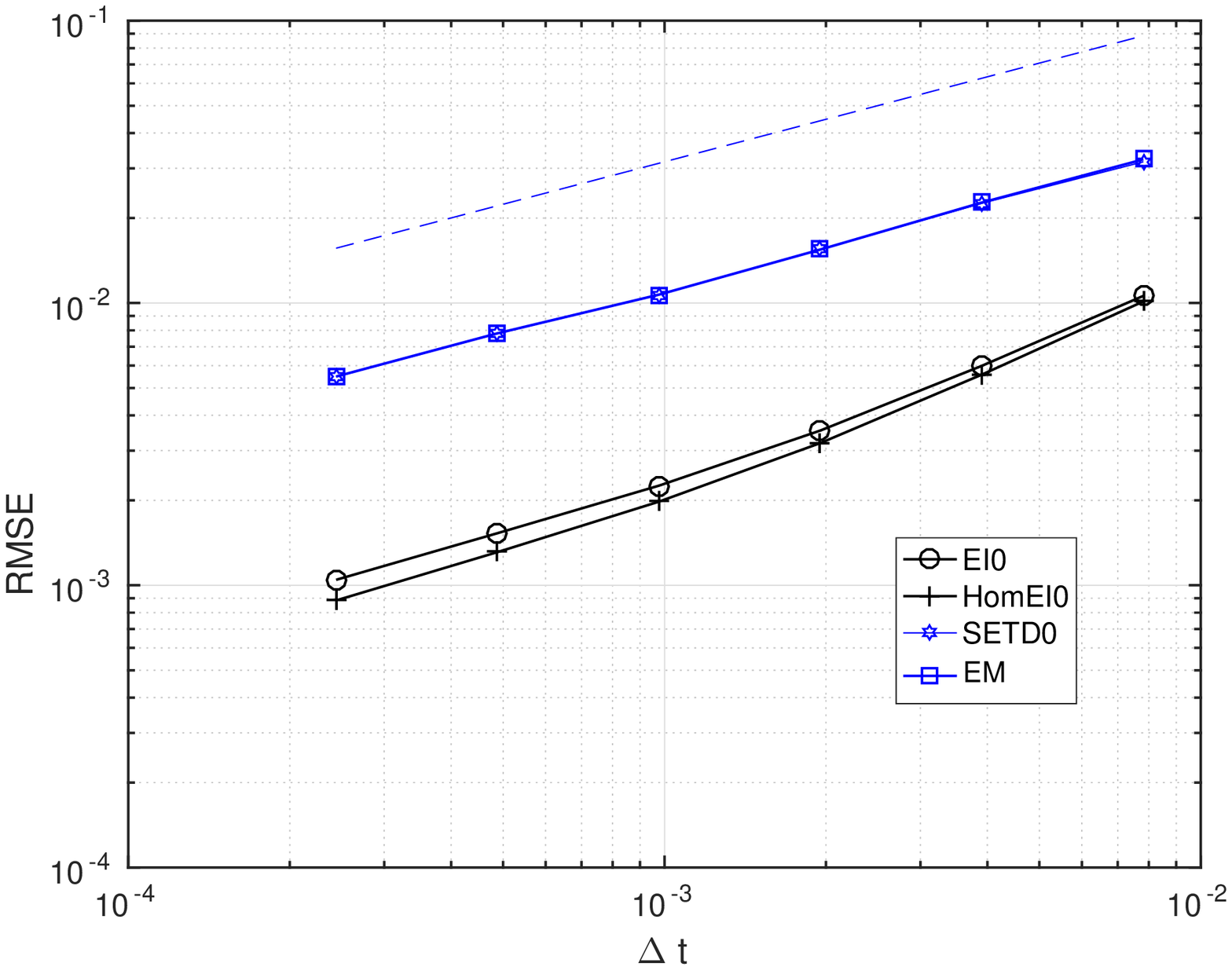}
\includegraphics[width=0.49\textwidth]{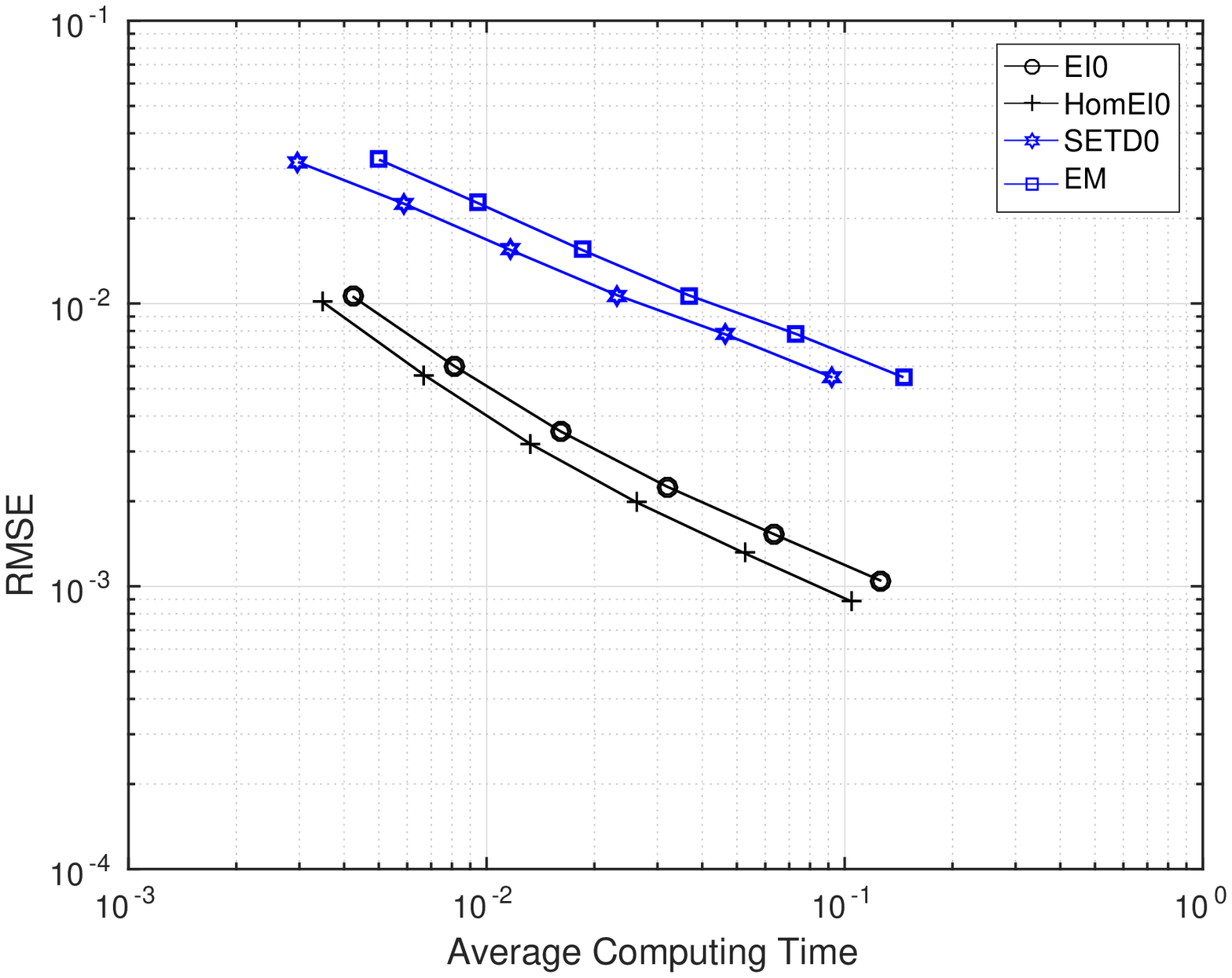}
\caption{Euler methods. Equation \eqref{eq:SPDEexample} 
  $\beta=1$, $\alpha=0.1$, $r=4$, $T=1$ and $M=1000$ samples (a) root
  mean square error 
  against $\Dt$. Also plotted is a reference line with slope 1/2.
  (b)  root mean square error against cputime. Here the linear noise
  term dominates and we see \expint{HomEI0} is the most efficient,
  followed by \expint{EI0}. See \figref{fig:2MIL} for Milstein schemes.}
\label{fig:2}
\end{center}
\end{figure}
\begin{figure}[ht]
\begin{center}
(a) \hspace{.49\textwidth} (b) \\
\includegraphics[width=0.49\textwidth]{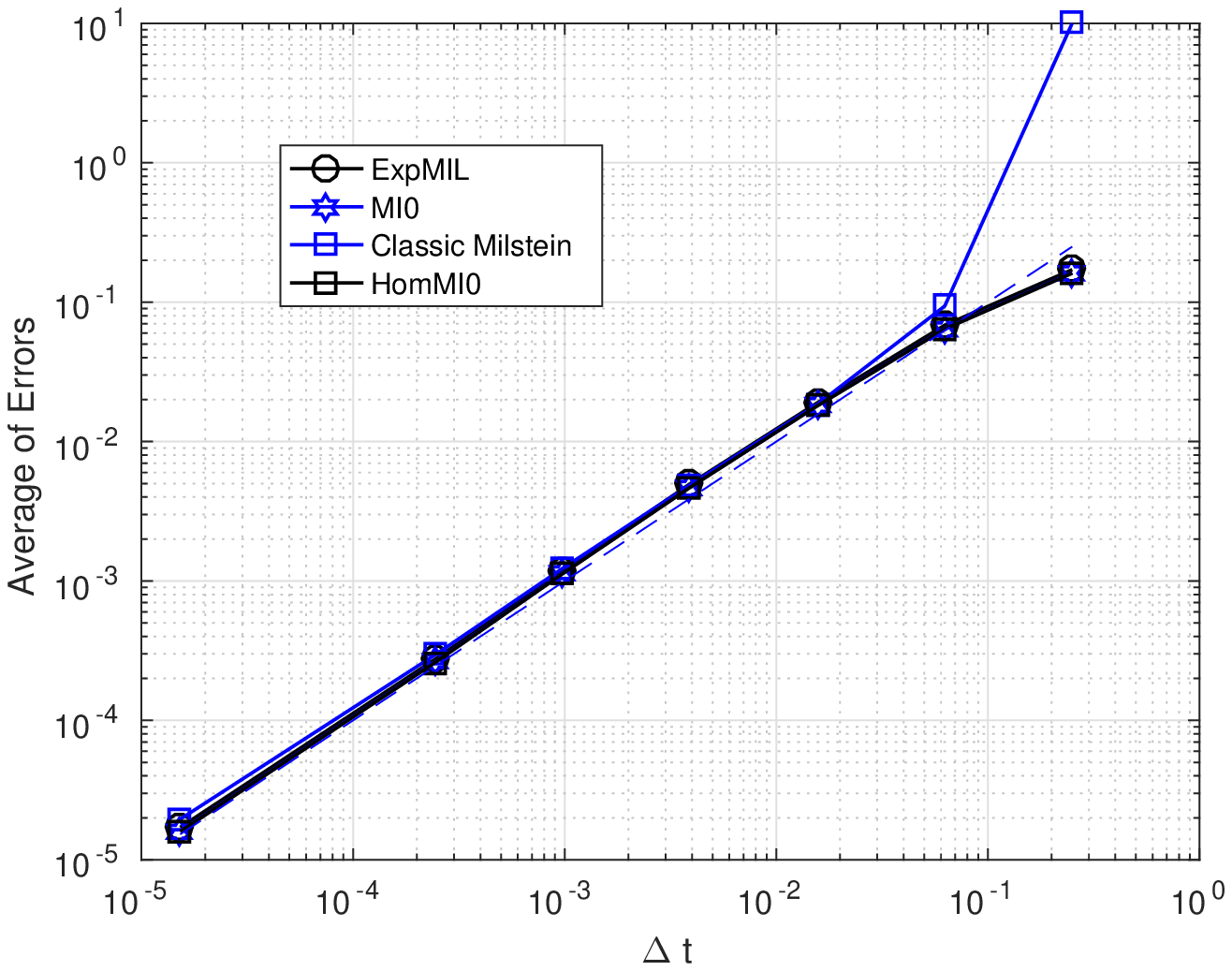}
\includegraphics[width=0.49\textwidth]{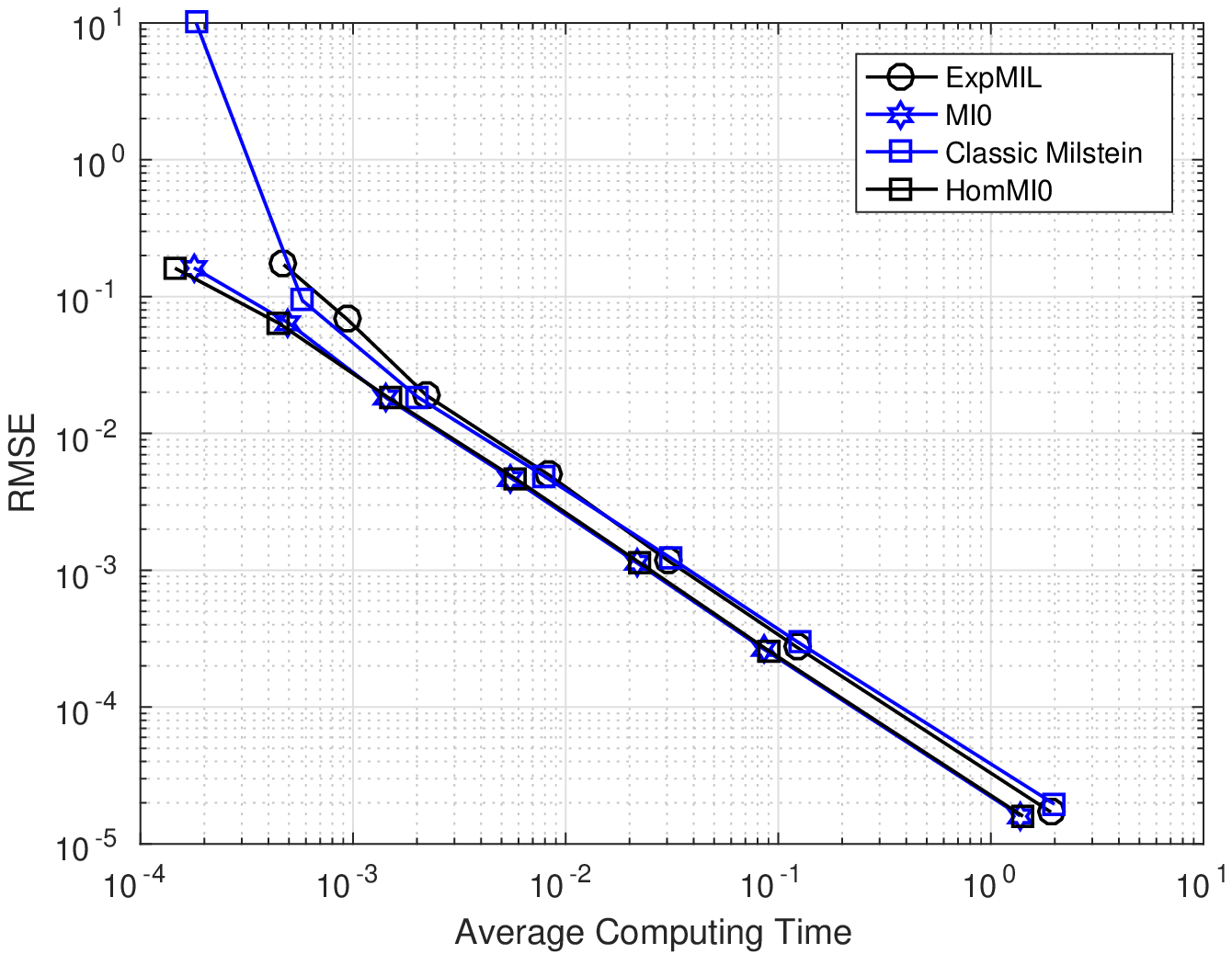}\\
\caption{Milstein methods. Equation \eqref{eq:SPDEexample} with 
  $\beta=1$, $\alpha=0.1$, $r=4$, $T=1$ and $M=100$ samples (a) root
  mean square error against $\Dt$. Also plotted is a reference line
  with slope 1 (compare to \figref{fig:2}). In 
  (b) root mean square error against cputime. Here the linear noise
  term dominates and we see \expint{HomMI0} and \expint{MI0} are the
  most efficient.}
\label{fig:2MIL}
\end{center}
\end{figure}
However, when $\beta=\alpha=1$ where we have equal weighting between
the linear and nonlinear term we see in \figref{fig:3} (a) the same
rate of convergence but now \ETDint{SETD0} and EM are more accurate
than \expint{EI0}. For efficiency we see in \figref{fig:3} (b) that
\expint{HomEI0} is still the most efficient, followed by
\ETDint{SETD0}. This illustrates the effectiveness of adding the
homotopy parameter. For the Milstein schemes we see the predicted
rate of convergence in \figref{fig:3MIL}  (a) and in (b) that 
\expint{HomEI0} and \expint{MI0} are marginally more efficient than
either the classical Milstein or Exponential Milstein schemes.
\begin{figure}[ht]
  \begin{center}
    (a) \hspace{.49\textwidth} (b) \\
    \includegraphics[width=0.49\textwidth]{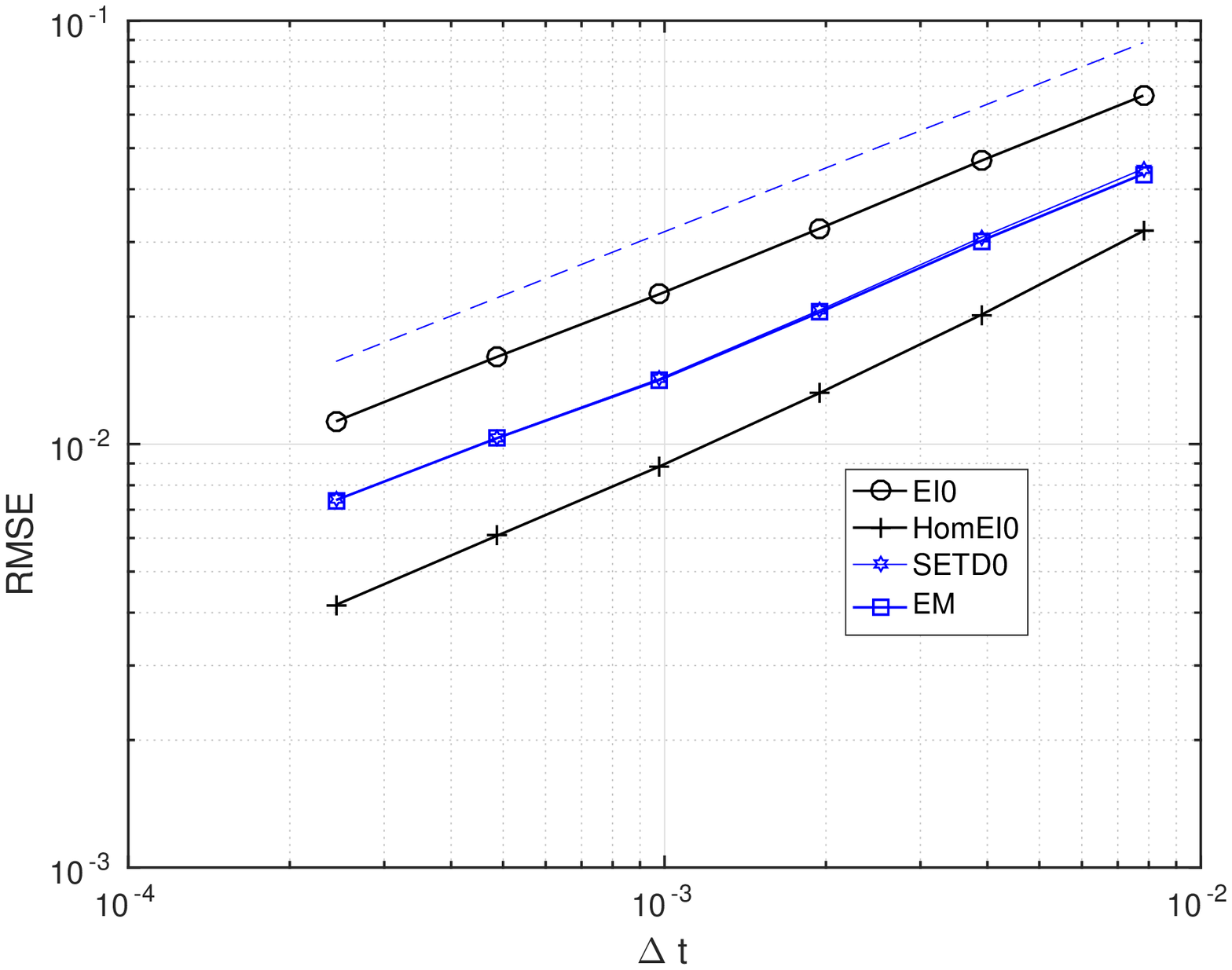}
    \includegraphics[width=0.49\textwidth]{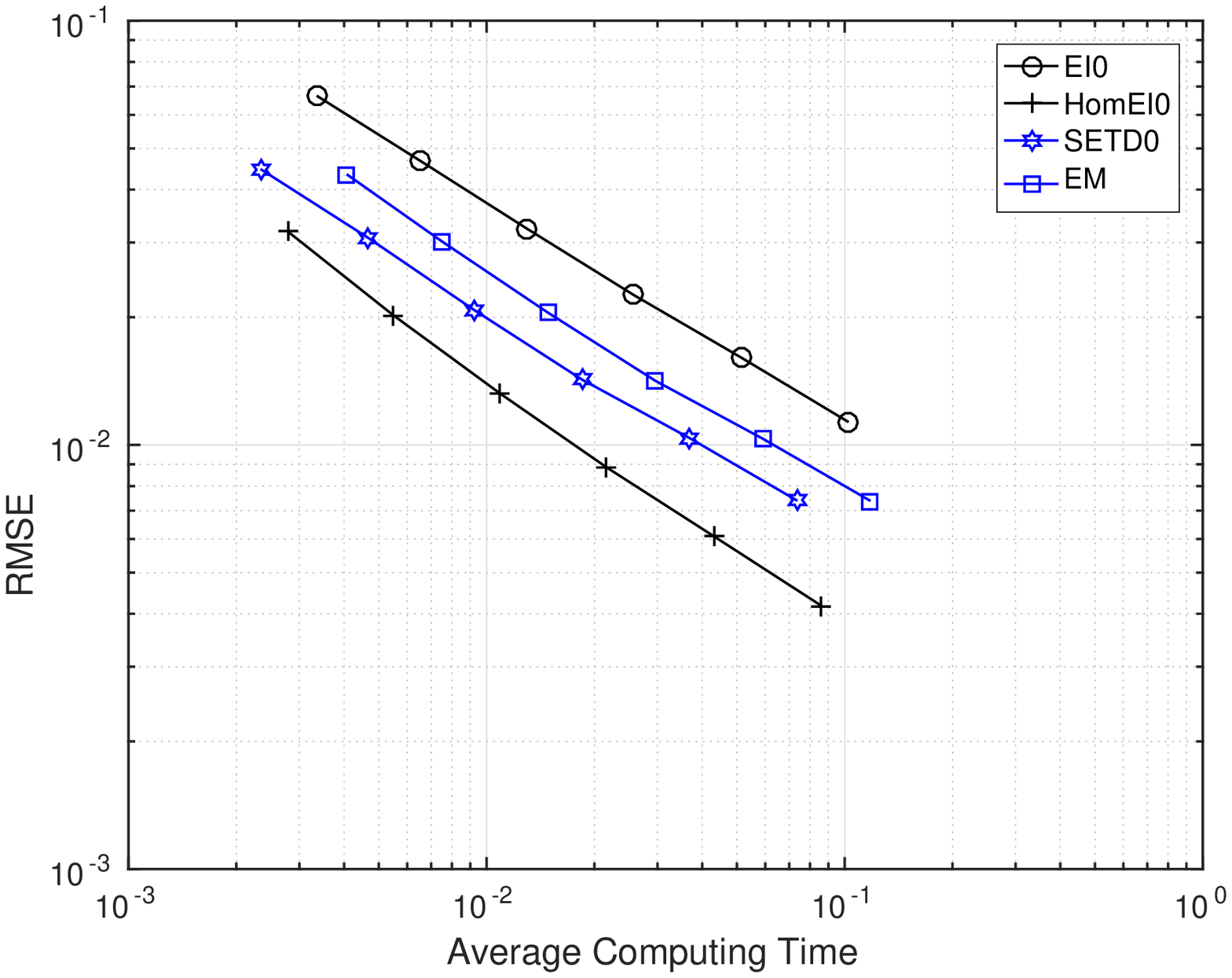}\\
    \caption{Euler methods. Equation \eqref{eq:SPDEexample} with 
      $\beta=1$, $\alpha=1$, $r=4$, $T=1$ and $M=1000$ samples (a) root
      mean square error against $\Dt$. Also plotted is a reference line
      with slope 1/2. (b)  root mean square error against cputime. We have
      equal weighting of linear and nonlinear noise terms
      and we see \expint{HomEI0} is clearly the most efficient and
      accurate. See \figref{fig:3MIL} for Milstein type schemes.}
    \label{fig:3}
  \end{center}
\end{figure}
\begin{figure}[ht]
  \begin{center}
    (a) \hspace{.49\textwidth} (b) \\
    \includegraphics[width=0.49\textwidth]{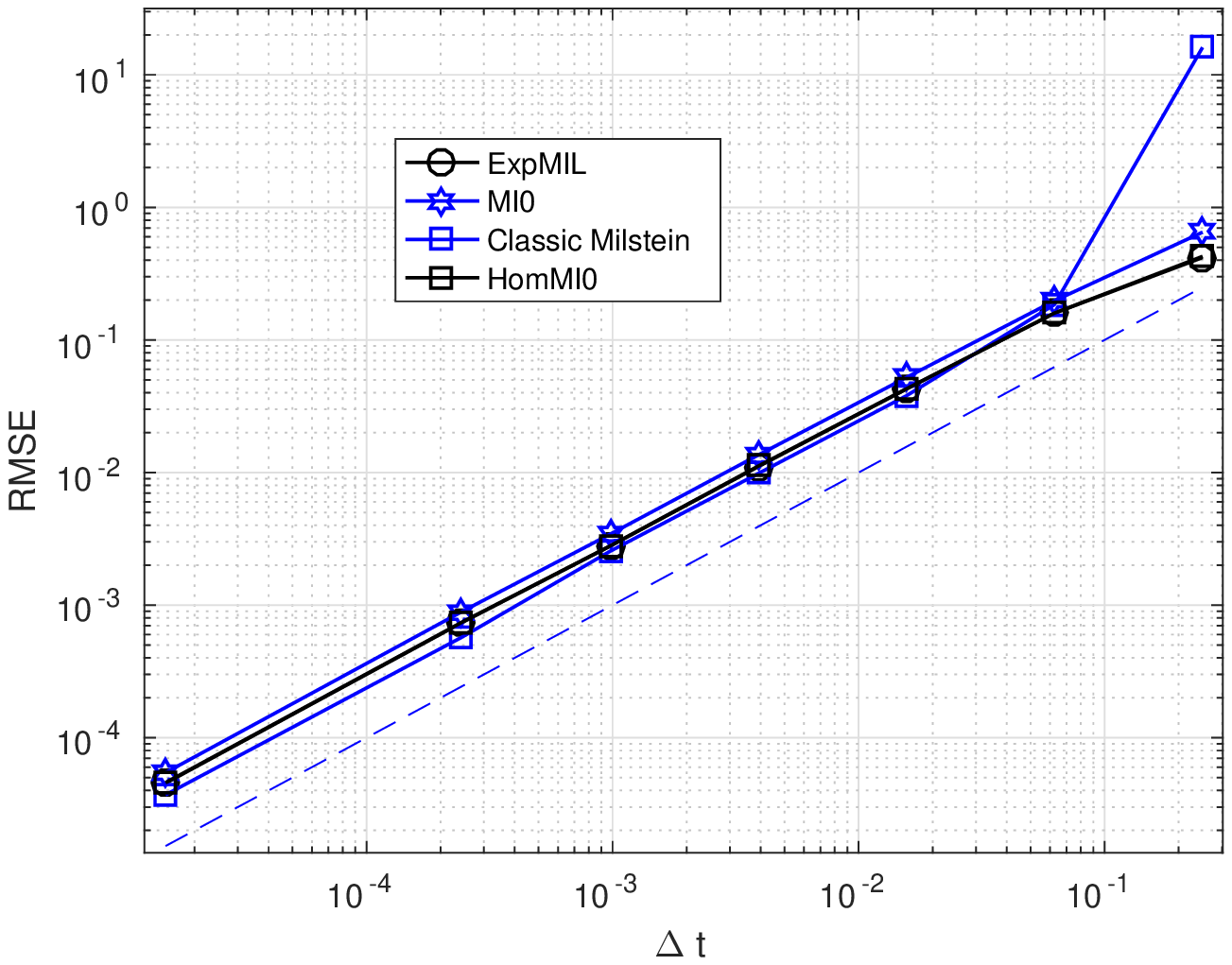}
    \includegraphics[width=0.49\textwidth]{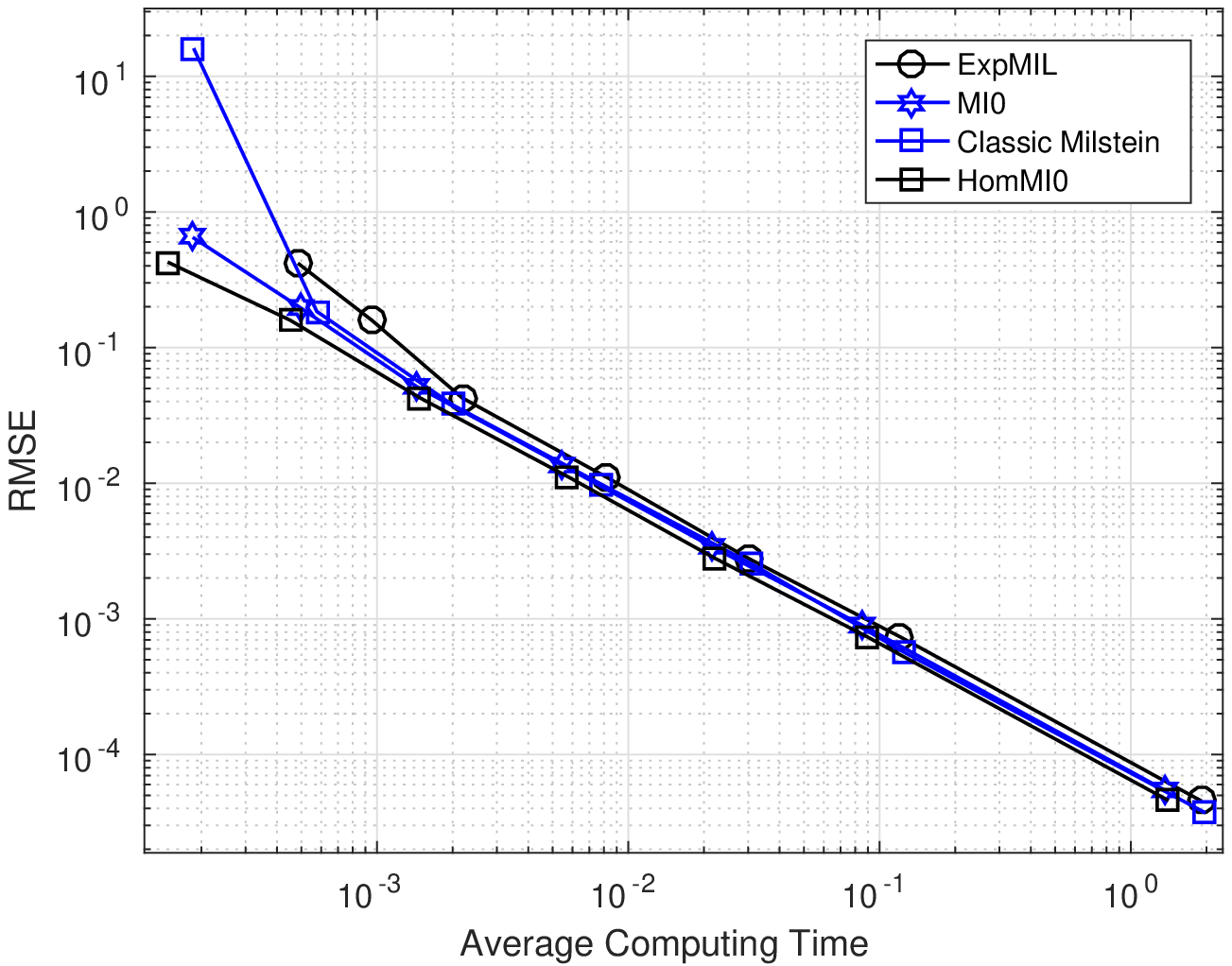}\\
    \caption{Milstein methods. Equation \eqref{eq:SPDEexample} with 
      $\beta=1$, $\alpha=1$, $r=4$, $T=1$ and $M=100$ samples (a) root
      mean square error against $\Dt$. Also plotted is a reference line
      with slope 1 (compare to \figref{fig:3}). In (b) root mean
      square error against cputime. With equal weighting of the noise
      we see \expint{HomMI0} and \expint{MI0} are marginally more efficient.}
    \label{fig:3MIL}
  \end{center}
\end{figure}
Next we consider in \figref{fig:3cd} the case where $\beta=1$ and
$\alpha=0.1$ so that it is the nonlinearity that dominates. We now see
that the errors from \expint{HomEI0} are similar to those or the standard
integrators \ETDint{SETD0} and EM and that \ETDint{SETD0} is now more
efficient. We note, however, that \expint{HomEI0} remains more
efficient than EM.
For the Milstein schemes we see the predicted
rate of convergence in \figref{fig:3cdMIL} (a) and in (b) that 
\expint{HomEI0} and \expint{MI0} are more efficient than
either the classical Milstein or Exponential Milstein schemes.
\begin{figure}[ht]
  \begin{center}
    (a) \hspace{.49\textwidth} (b) \\
    \includegraphics[width=0.49\textwidth]{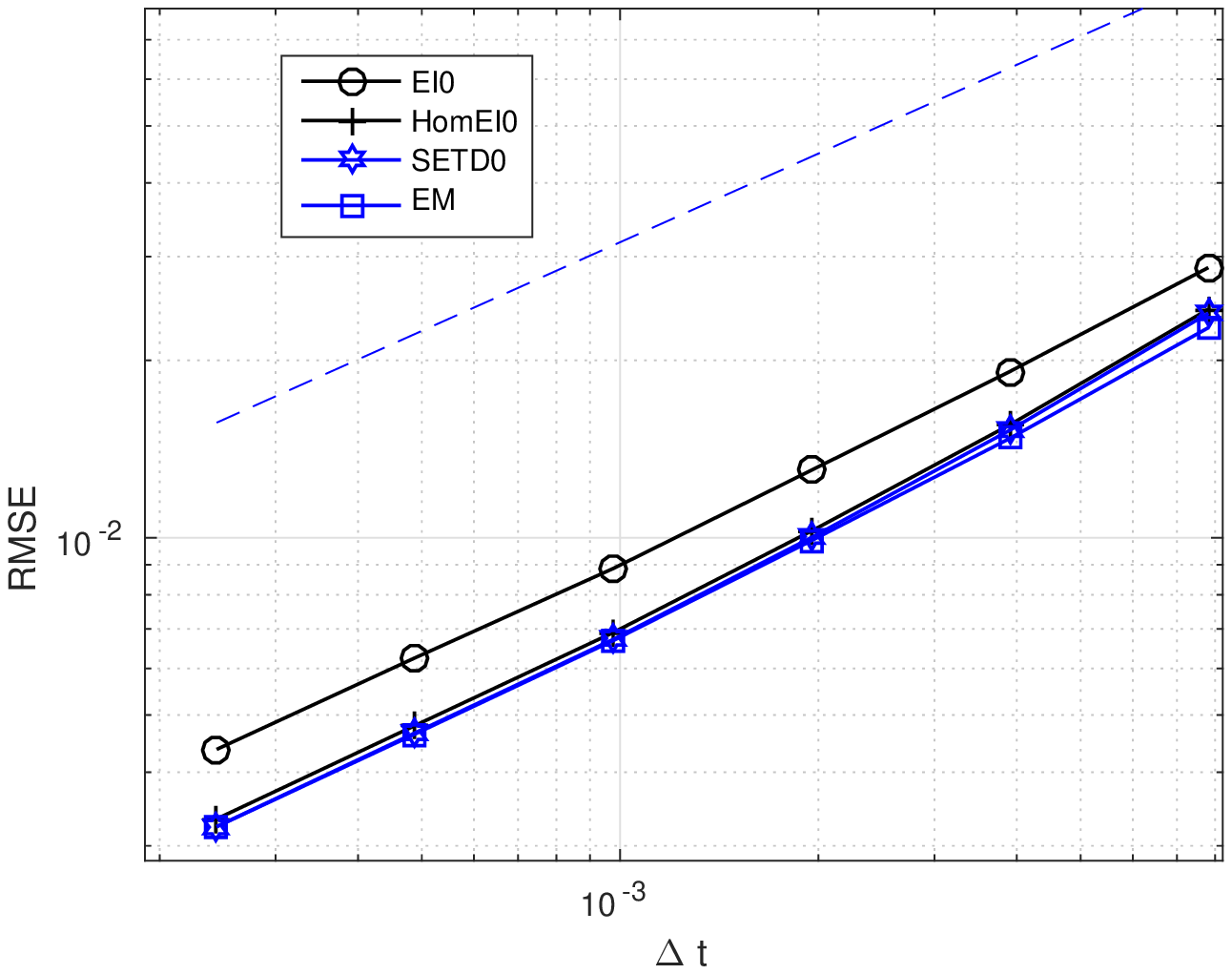}
    \includegraphics[width=0.49\textwidth]{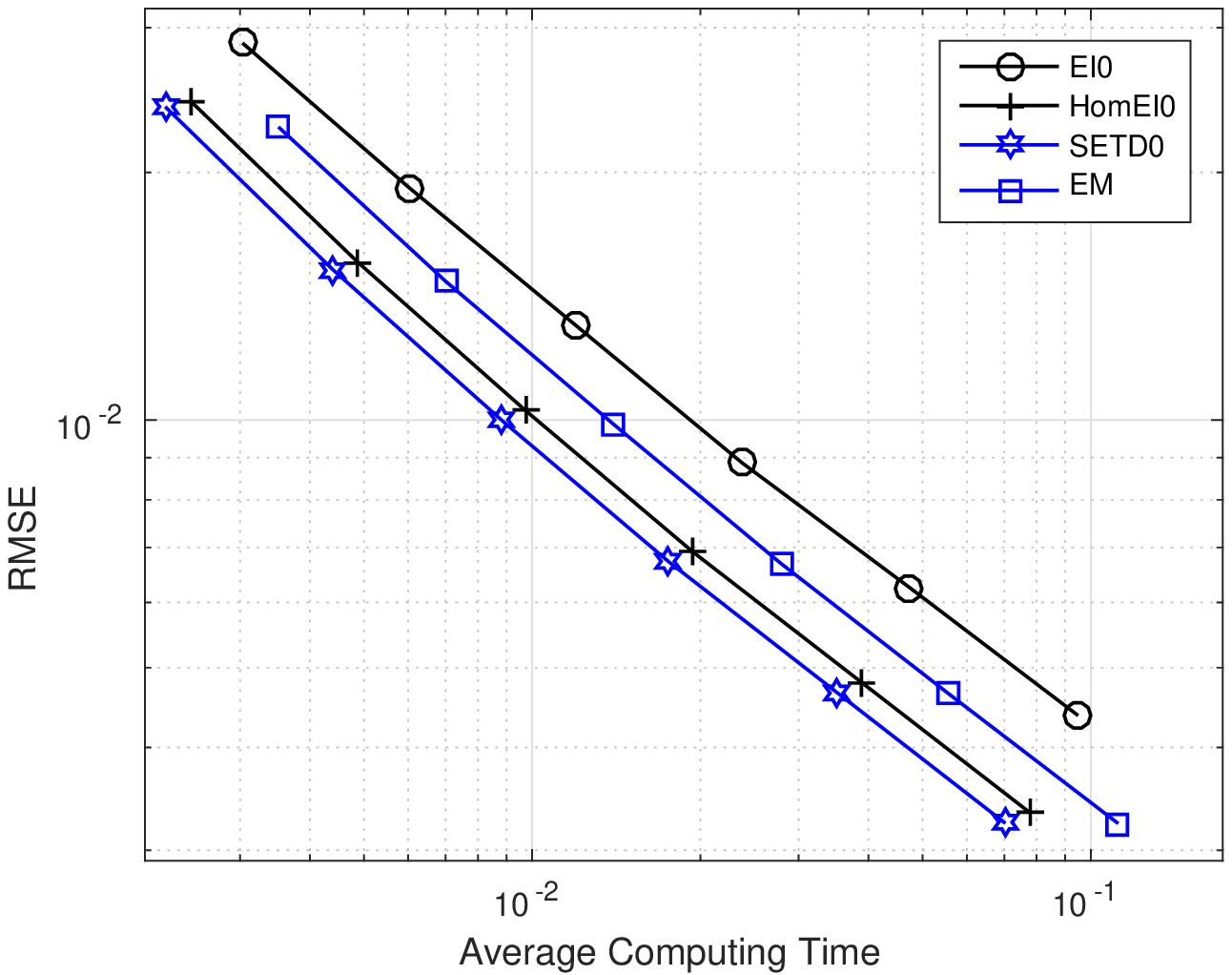}
    \caption{Euler methods. Equation \eqref{eq:SPDEexample} with 
      $\beta=0.1$, $\alpha=1$, $r=4$, $T=1$ and $M=1000$ samples (a) root
      mean square error against $\Dt$. Also plotted is a reference line
      with slope 1/2. (b)  root mean square error against cputime.
      The noise is dominated by the nonlinear term. We now see that
      \expint{SETD0} is the most efficient, followed by
      \expint{HomEI0}. See \figref{fig:3cdMIL} for Milstein type schemes.}
    \label{fig:3cd}
  \end{center}
\end{figure}
\begin{figure}[ht]
  \begin{center}
    (a) \hspace{.49\textwidth} (b) \\
    \includegraphics[width=0.49\textwidth]{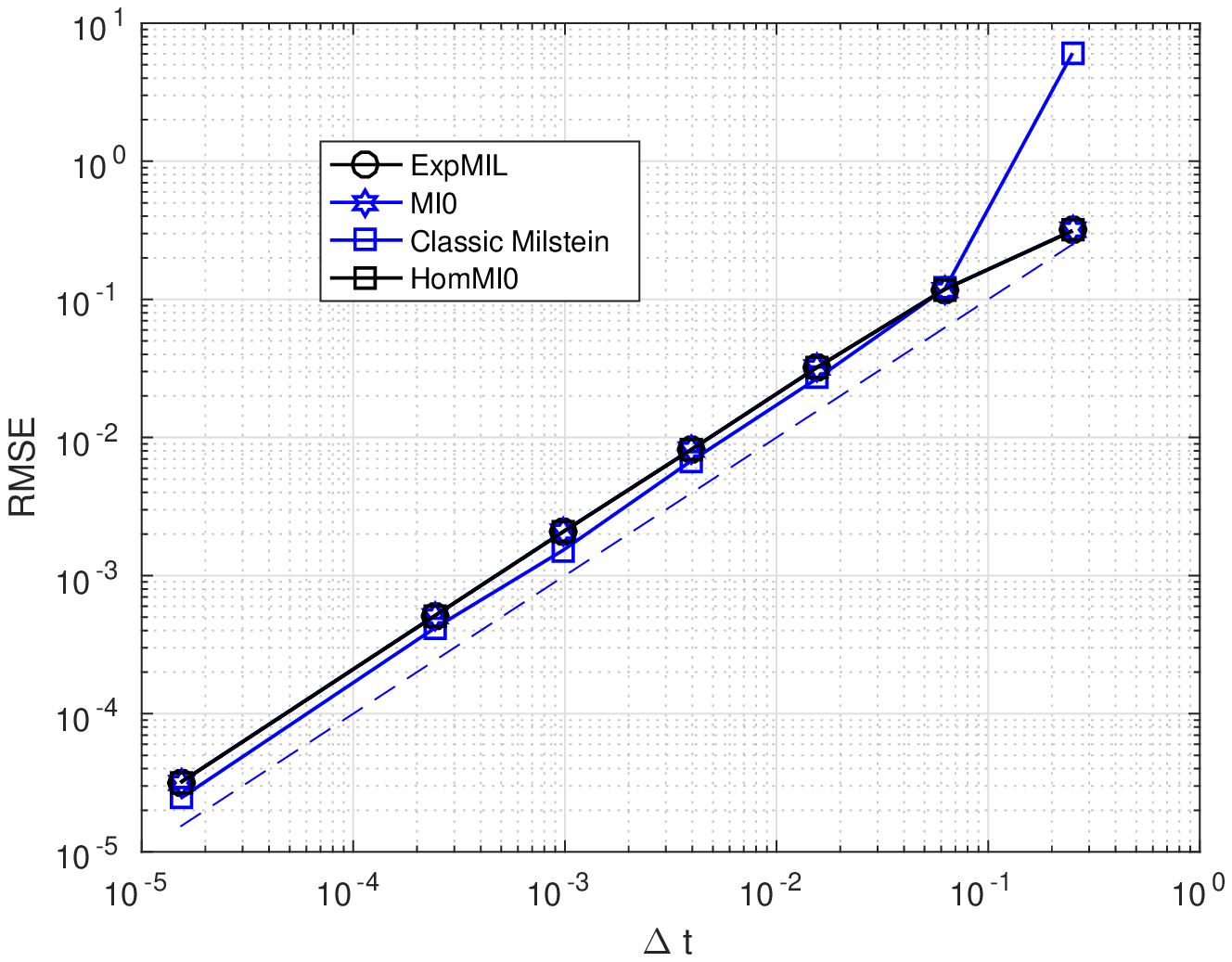}
    \includegraphics[width=0.49\textwidth]{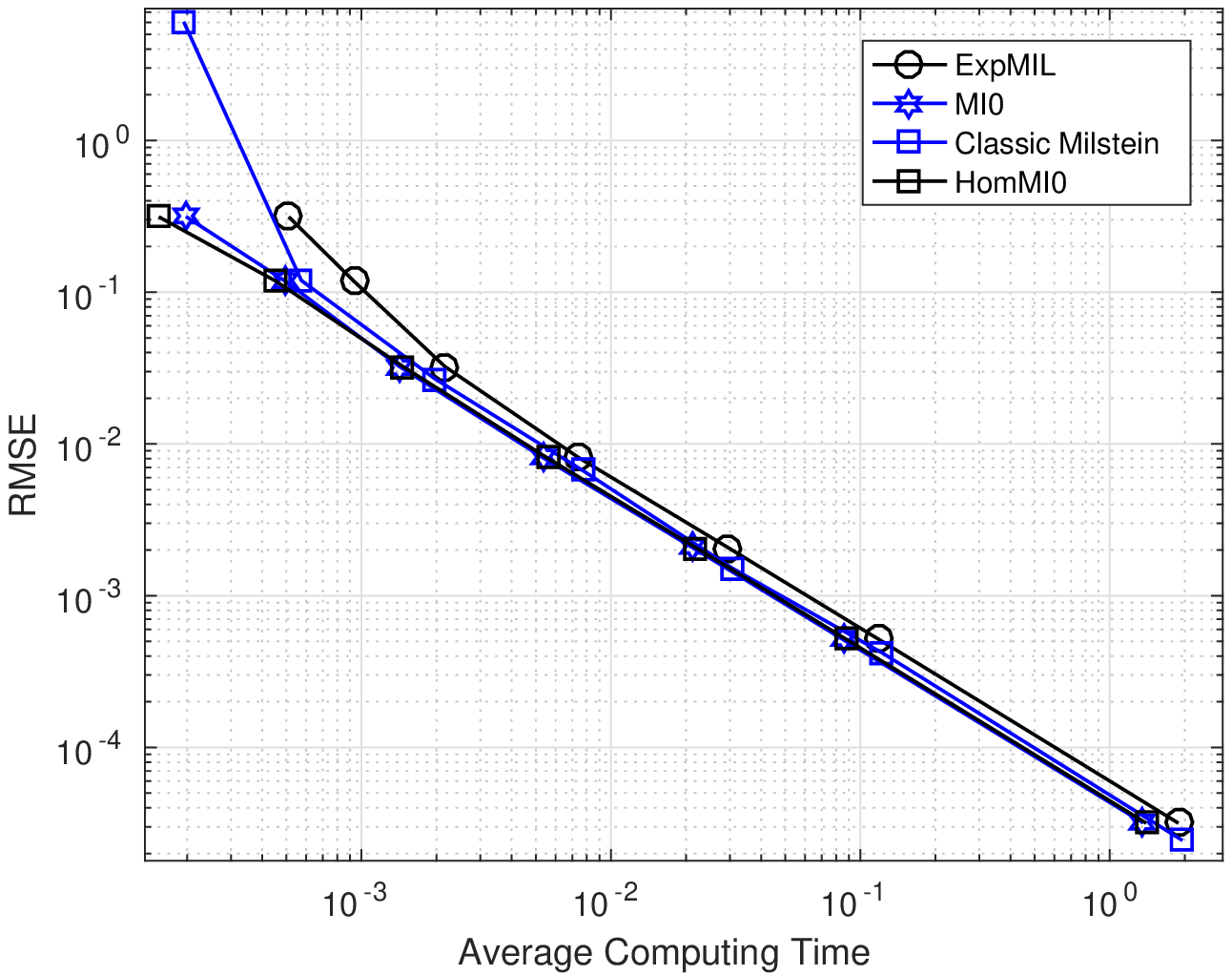}\\
    \caption{Milstein methods. Equation \eqref{eq:SPDEexample} with 
      $\beta=0.1$, $\alpha=1$, $r=4$, $T=1$ and $M=100$ samples (a) root
      mean square error against $\Dt$. Also plotted is a reference line
      with slope 1 (compare to \figref{fig:3cd}). In 
      (b) root mean square error against cputime. The noise is
      dominated by the nonlinear term. We see that \expint{HomMI0} and \expint{MI0} are the
  most efficient.}
    \label{fig:3cdMIL}
  \end{center}
\end{figure}

\subsection*{Non Commutative  Noise}
Now consider \eqref{eq:SPDEexample} with non-commutative noise by
taking the following diffusion coefficient matrix
\begin{equation}
G(\vec{u})=\begin{pmatrix}
\beta u_1 & 0 & 0 \\
0&\beta u_2 - \alpha u_1 &0&0\\
0&0&\beta u_3 - \alpha u_2&0 \\
0&0&0&\beta u_4 - \alpha u_{3}
\end{pmatrix} \
\end{equation} 
In order to apply  \expint{EI} schemes, consider  the splitting
\begin{equation}
G(\vec{u})= \beta \begin{pmatrix}
 u_1 & 0 & 0 & 0  \\
0& u_2  &0&0\\
0&0&u_3 &0 \\
0&0&0& u_4 
\end{pmatrix} \
- \alpha  \begin{pmatrix}
0 & 0 & 0 & 0  \\
0&  u_1 &0&0\\
0&0& u_2&0  \\
0&0&0& u_{3}
\end{pmatrix} \
\end{equation}  
that gives  the matrices $B_i=\beta diag(\vec{e}_i)$   and the vectors
$\vec{g}_i (\vec{u})$ having only non zero  element $-\alpha u_i $ in
$(i-1)$ th  entry. 
In this case Levy areas are now needed to apply the exponential
Milstein scheme and due to this extra computational cost in obtaining a
reference solution we reduce the number of samples to $M=100$
and take $\Dt_{\text{ref}}=2^{-14}$

\figref{fig:45} compares the cases where $\beta=1$, $\alpha=0.1$ in
(a) and (b) and $\beta=1$, $\alpha=1$ in (c) and (d).
When the linear term dominates \figref{fig:45} (a) and (b) we see that
the schemes \expint{HomEI0} and \expint{EI0} have smaller error and are
the most efficient.  In  \figref{fig:45} (c) and (d), where there is
an equal weighting between the diagonal and nondiagonal term in the noise,
we see \expint{HomEI0} and \ETDint{SETD0} are now equally as
efficient. When the nondiagonal part dominates the diagonal part of the
noise then \figref{fig:5extra} shows that \expint{HomEI0} is still the
most efficient closely followed by the semi-implicit
  Euler--Maruyama method.
%the situation is similar to \figref{fig:3cd} and
%\expint{SETD0} is slightly more efficient than \expint{HomEI0}, see
%\figref{fig:5extra} where $\beta=0.1$ and $\alpha=1$. 

\begin{figure}[ht]
\begin{center}
(a) \hspace{.49\textwidth} (b) \\
\includegraphics[width=0.49\textwidth]{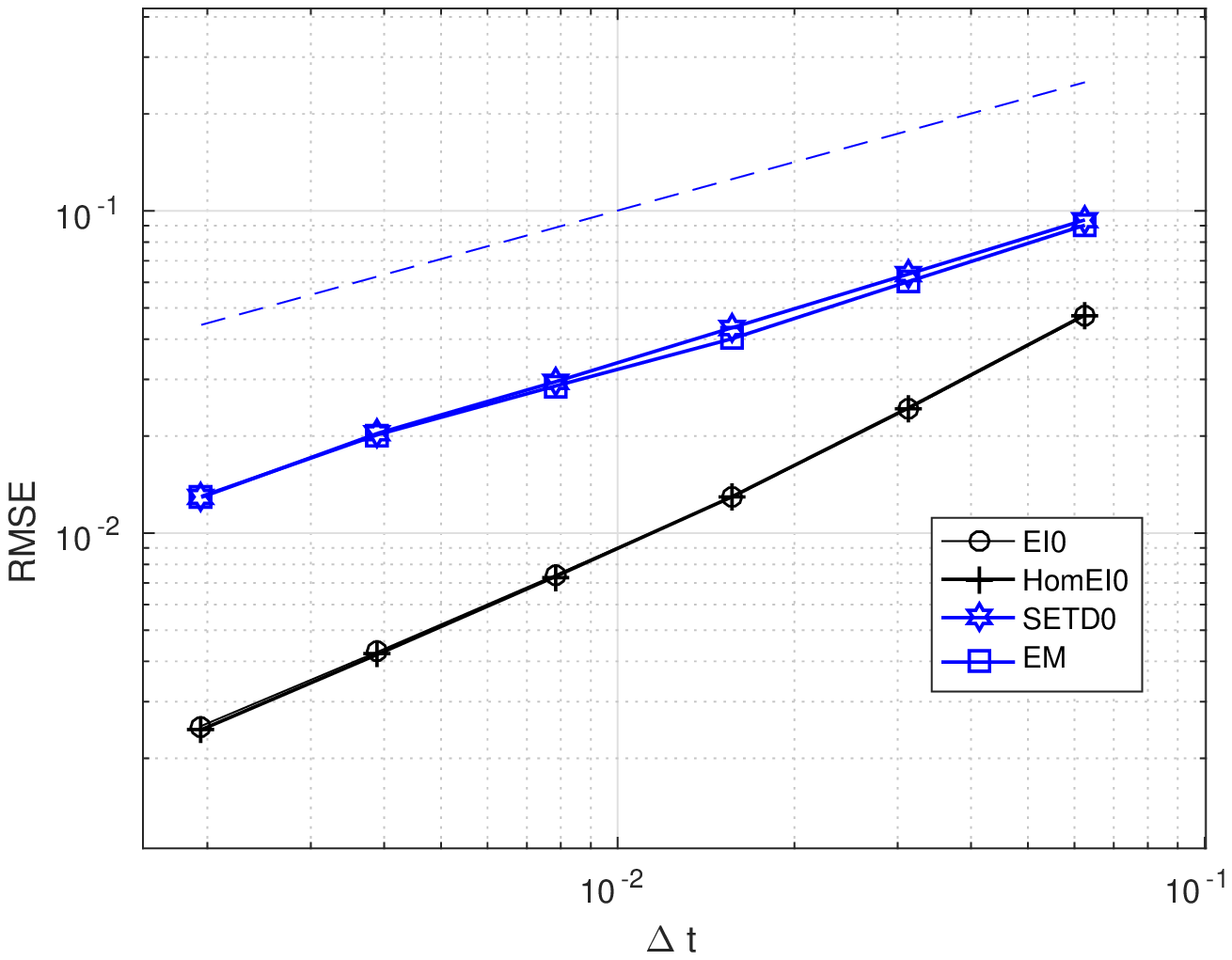}
\includegraphics[width=0.49\textwidth]{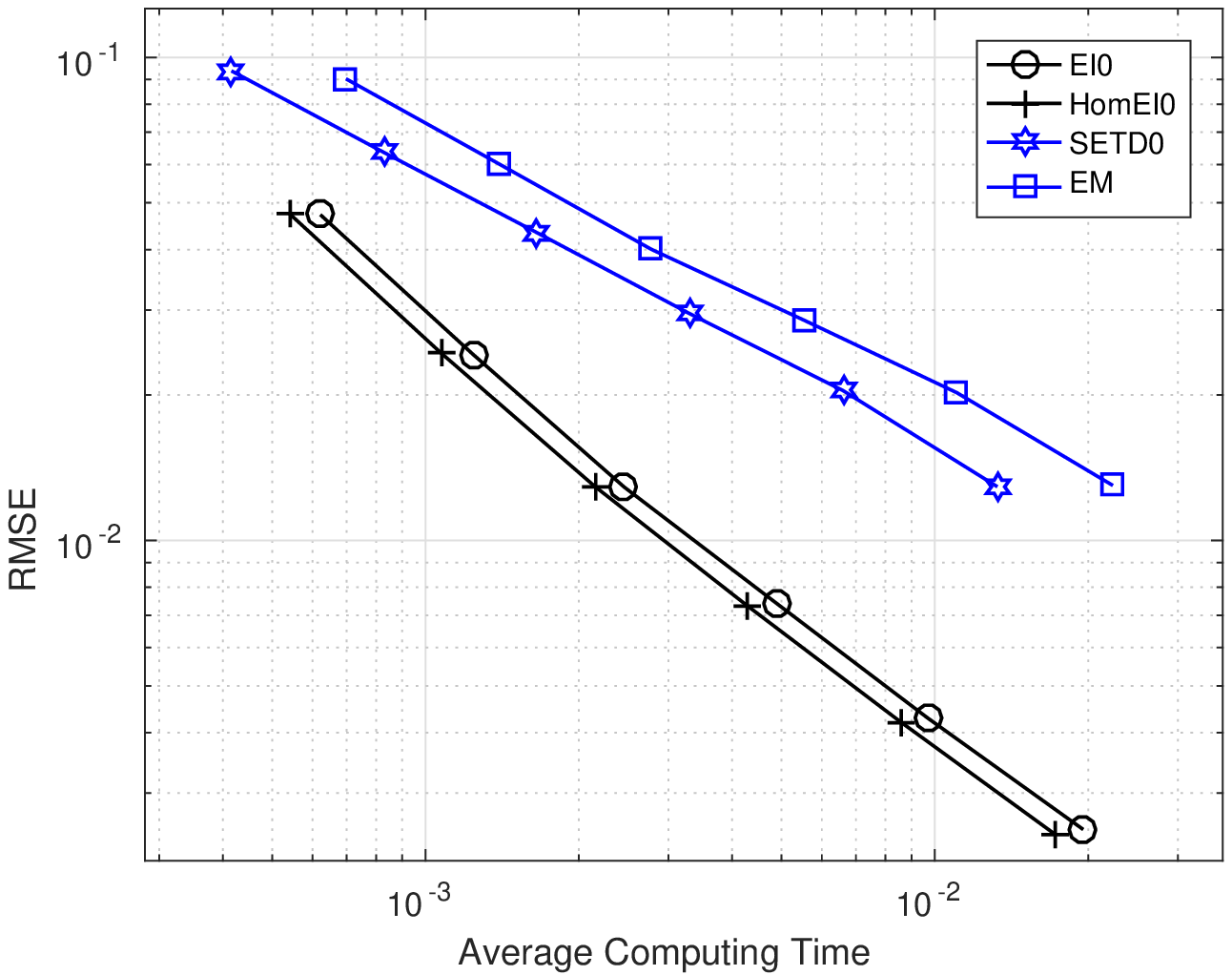}\\
(c) \hspace{.49\textwidth} (d) \\
\includegraphics[width=0.49\textwidth]{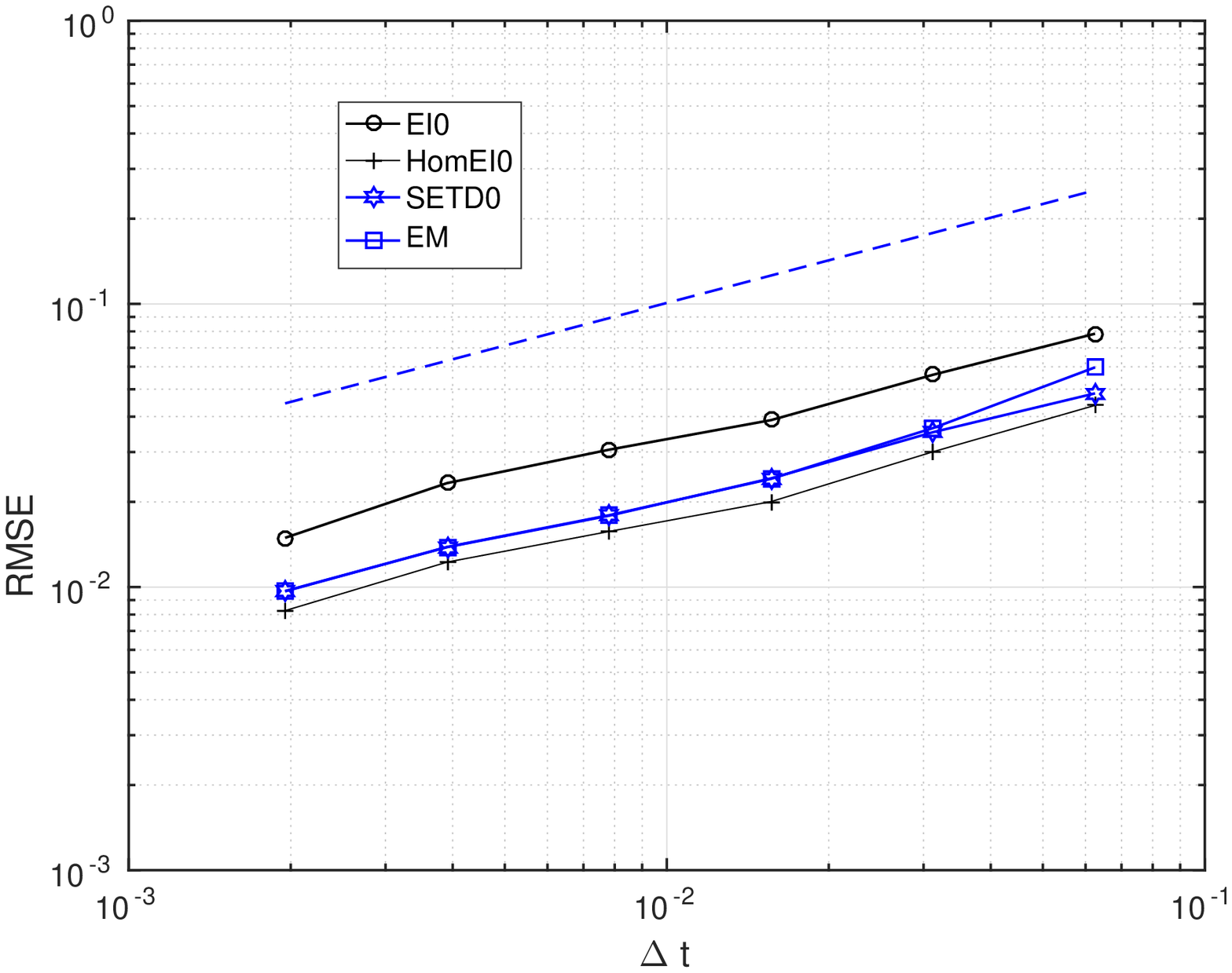}
\includegraphics[width=0.49\textwidth]{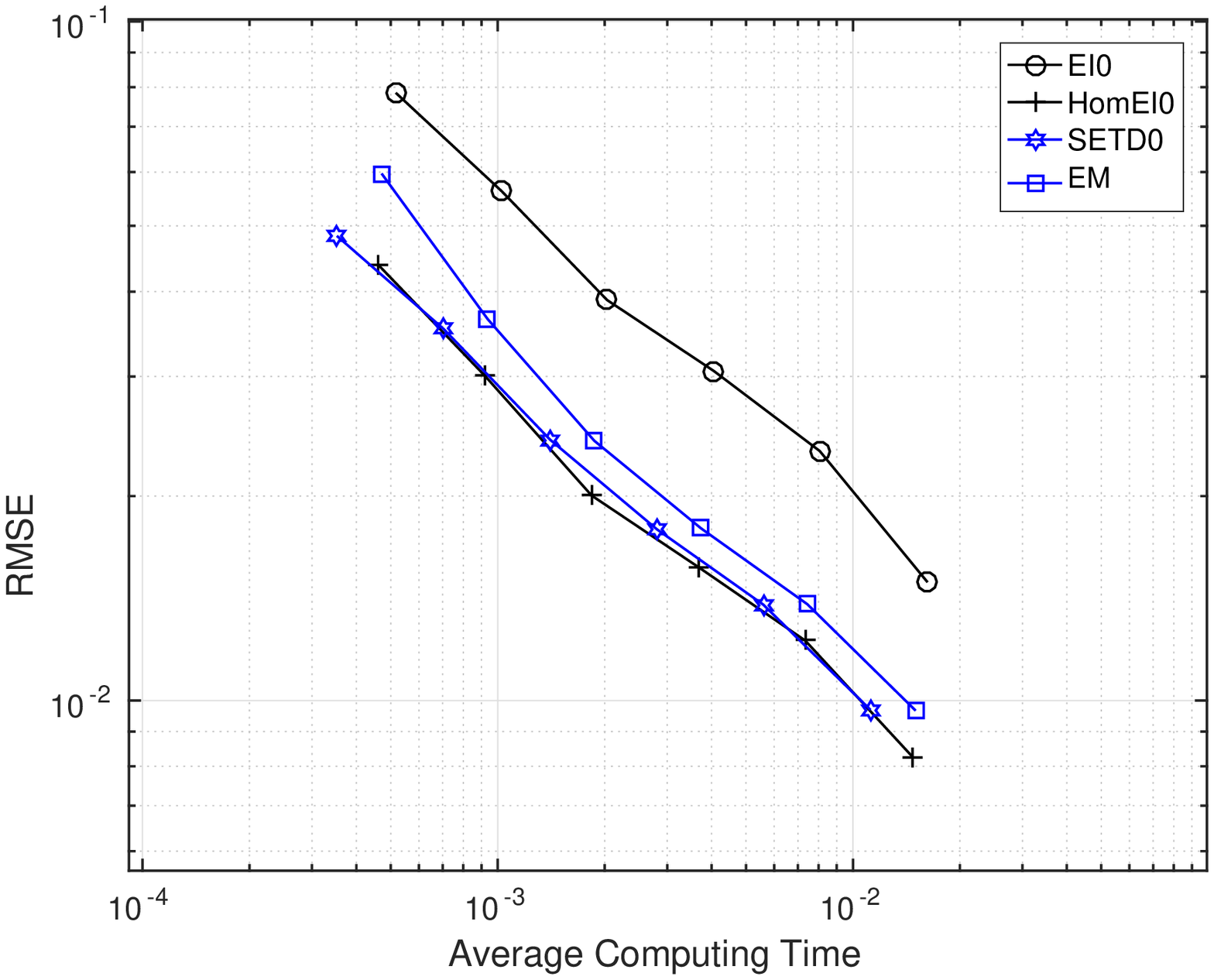}
\caption{Equation \eqref{eq:SPDEexample} with $r=4$, $T=1$ and
  $M=1000$ samples comparing in (a), (b) $\beta=1$, $\alpha=0.1$ 
  and in (c), (d)  $\beta=1$, $\alpha=1$.
  (a) and (c) show root mean square error against $\Dt$. Also plotted is a reference line
  with slope 1/2. (b) and (d)  root mean square error against
  cputime. Where the diagonal noise term dominates \expint{HomEI0} is
  the most efficient. For equal weighting we see \expint{HomEI0} is as
efficient as \ETDint{SETD0}.}
\label{fig:45}
\end{center}
\end{figure}

\begin{figure}[ht]
\begin{center}
(a) \hspace{.49\textwidth} (b) \\
\includegraphics[width=0.49\textwidth,height=4.5cm]{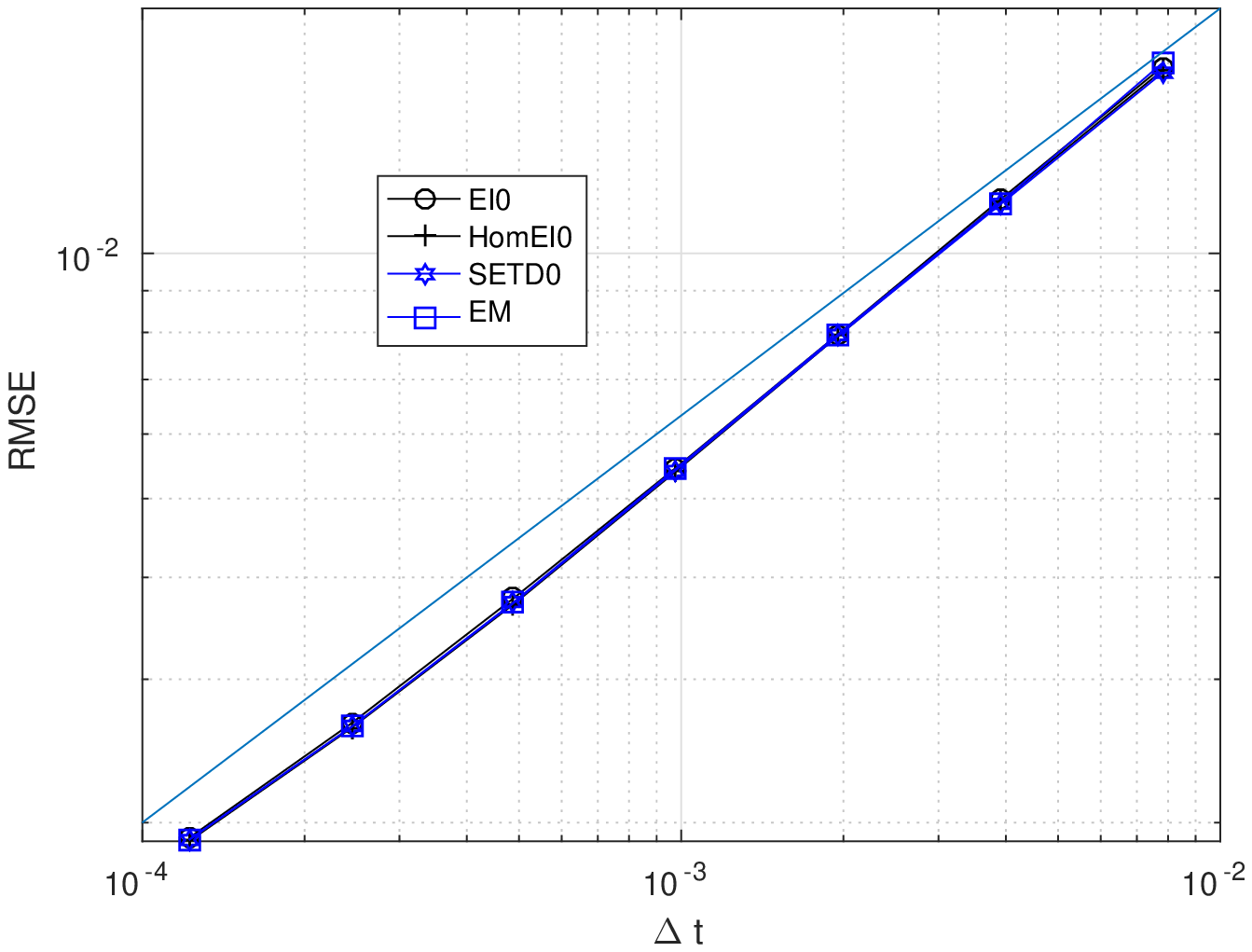}
\includegraphics[width=0.49\textwidth,height=4.5cm]{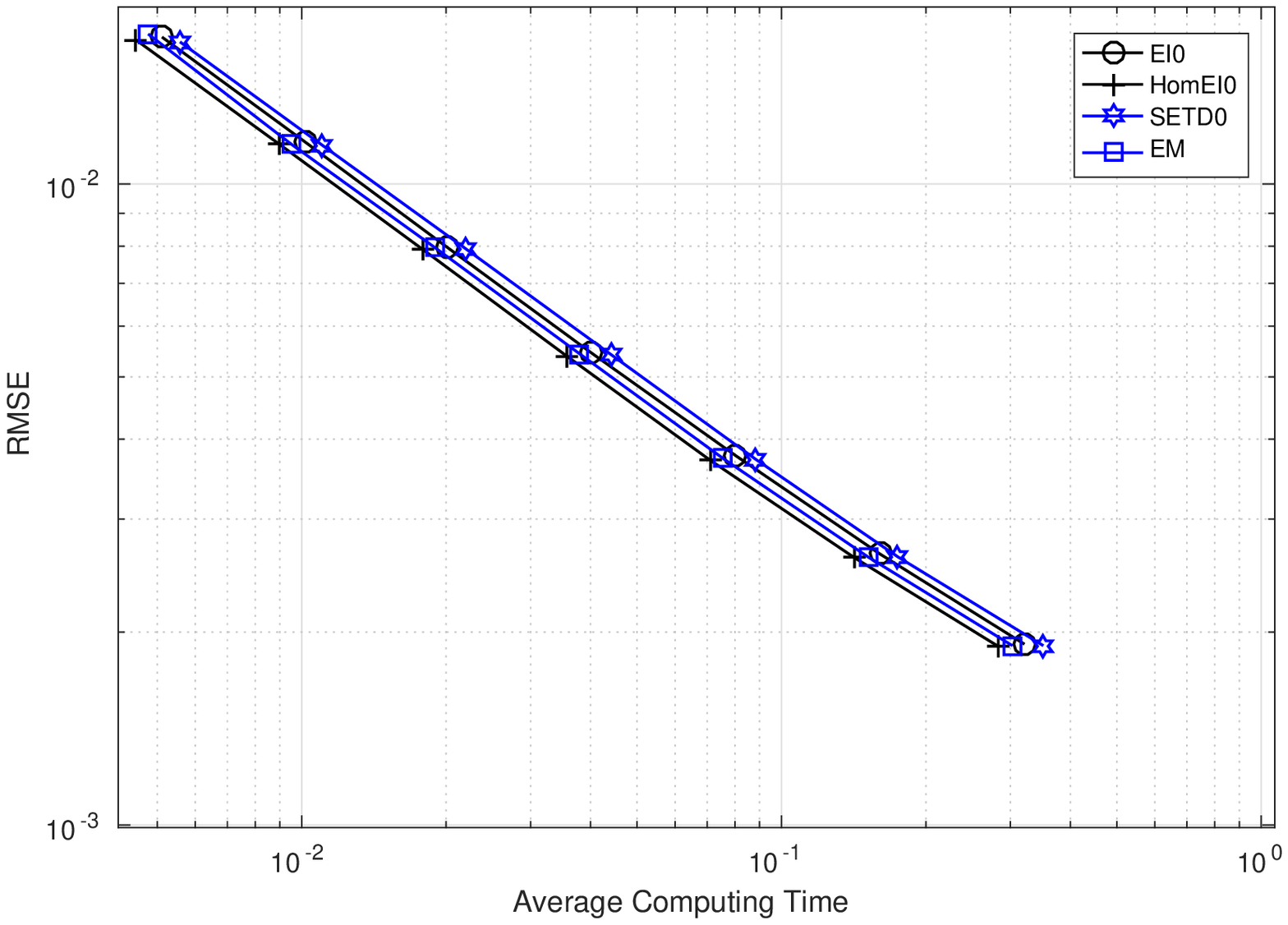}
\caption{Equation \eqref{eq:SPDEexample} with $r=4$, $T=1$ and
  $M=1000$ samples with $\beta=0.1$, $\alpha=1$.  (a) root
  mean square error against $\Dt$. Also plotted is a reference line
  with slope 1/2. (b)  root mean square error against cputime.
  The noise is dominated by the non diagonal term and 
  we now see that
  \expint{HomEI0} is the most efficient, followed by the semi-implicit
  Euler--Maruyama method.%\expint{ImpEM}.
  %We now see that
  %\expint{SETD0} is the most efficient, followed by \expint{HomEI0}.
}
\label{fig:5extra}
\end{center}
\end{figure}

\subsection*{Example 3 : Linear stiff SDE}
Finally we consider the following linear equation which is used as a
test equation for stiff solvers, see for example \cite{geometric} 
\begin{equation} \label{eq:stiff}
d \vec{u} (t)=\beta \begin{pmatrix}
0 & 1  \\
-1 & 0
\end{pmatrix}  \vec{u} (t)dt
+ 
\frac{\sigma}{2}  \begin{pmatrix}
1& 1  \\
1 & 1
\end{pmatrix} \vec{u} (t) d  W_1 (t) 
+
\frac{\rho}{2}     \begin{pmatrix}
1 & -1 \\
-1& 1 
\end{pmatrix} \vec{u} (t) d  W_2 (t)
\end{equation}
with initial condition $\vec{u}(0)=(1,0)^T$. The aim is to estimate
$\eval{\vec{u}(t)}$ for $t \in [0,T]$. It is known from theory that
solutions stay in the neighbourhood of the origin, see \cite{geometric}.
We perform simulations with $\beta=5$, $\sigma=4$, $\rho=0.5$ and
$T=50$ with a fixed time step of $\Dt=0.05$ and $M=1000$ realisations.
We compare approximations of $\eval{\vec{u}(t)}$ using \ETDint{SETD0},
and \expint{EI0}. To apply \expint{EI0}, we take 
\begin{equation}
  B_1= \frac{\sigma}{2}I_2, \quad  B_2=\frac{\rho}{2} I_2, \quad \vec{g}_1 (\vec{u})=\frac{\sigma}{2} \begin{pmatrix}
    u_2  \\
    u_1
  \end{pmatrix}, \quad \vec{g}_2 (\vec{u})=\frac{\rho}{2} \begin{pmatrix}
    -u_2  \\
    -u_1
  \end{pmatrix}.
\end{equation}
We observe in \figref{fig:6} that the \ETDint{SETD0} solution grows
rapidly away from the origin, for \expint{EI0} solutions are bounded
close to the origin and that the dynamics of  \expint{EI0} more
closely matches the dynamics of the underlying SDE. 
\begin{figure}[ht]
\begin{center}
(a) \hspace{.49\textwidth} (b) \\
\includegraphics[width=0.49\textwidth]{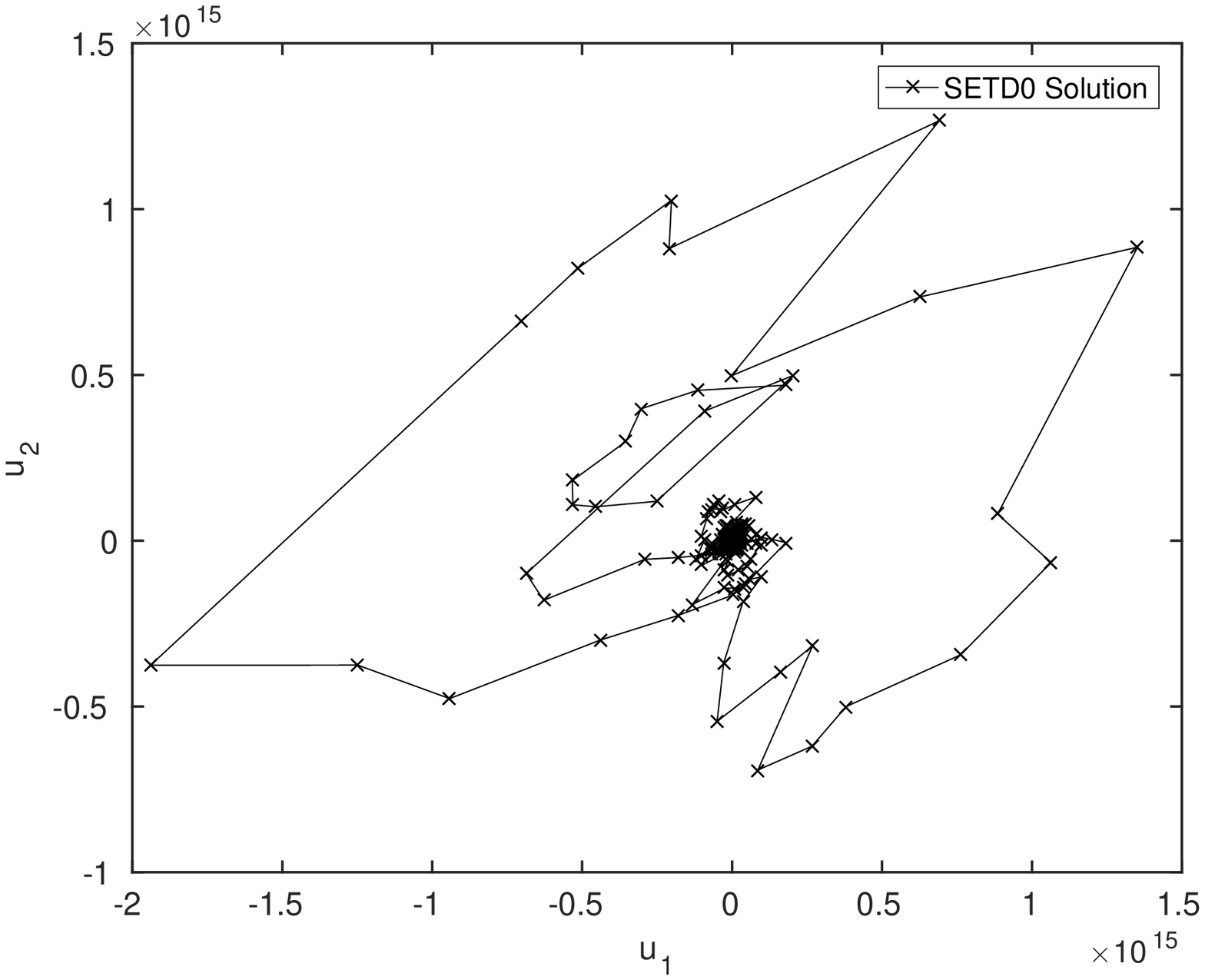}
\includegraphics[width=0.49\textwidth]{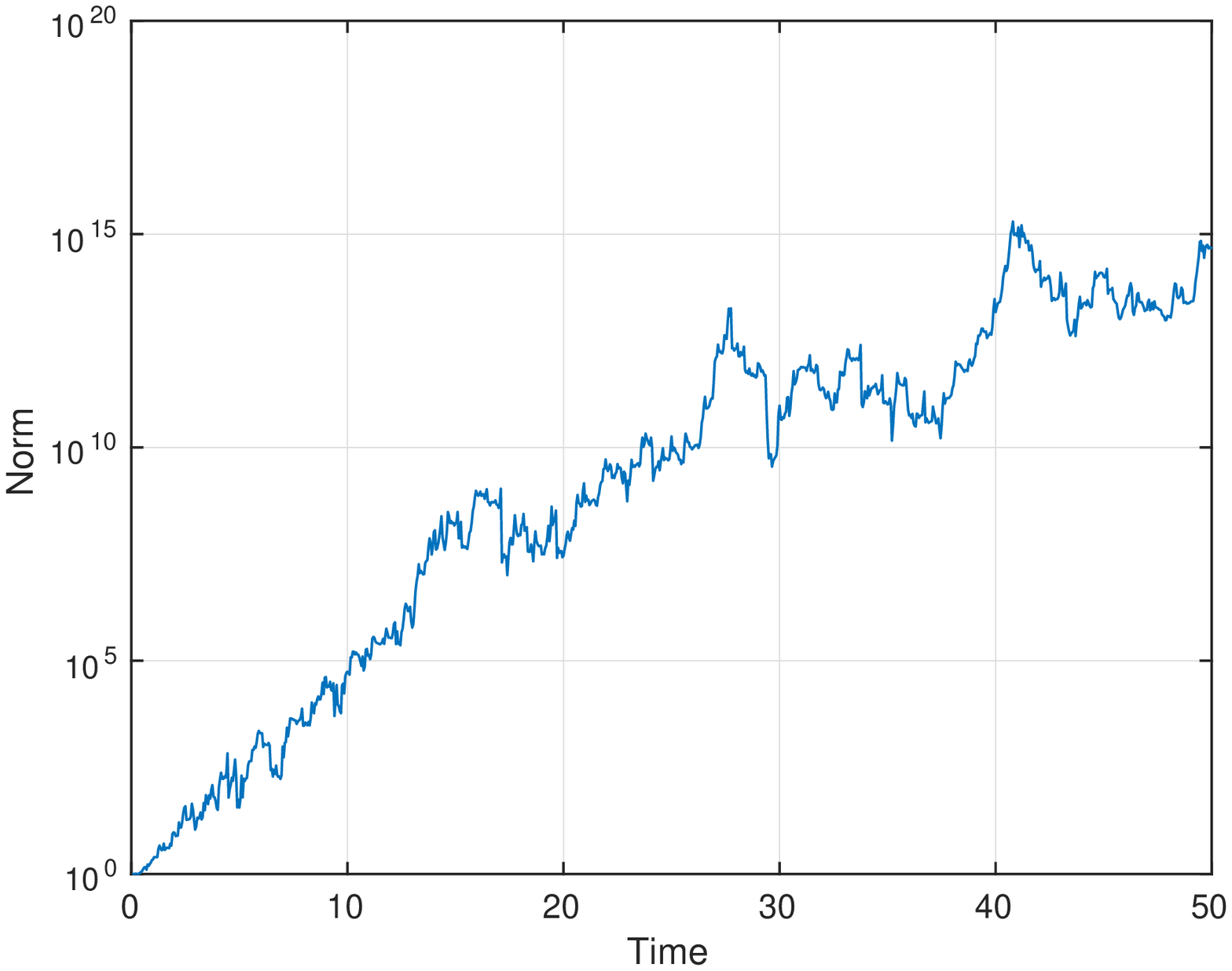}\\
(c) \hspace{.49\textwidth} (d) \\
\includegraphics[width=0.49\textwidth]{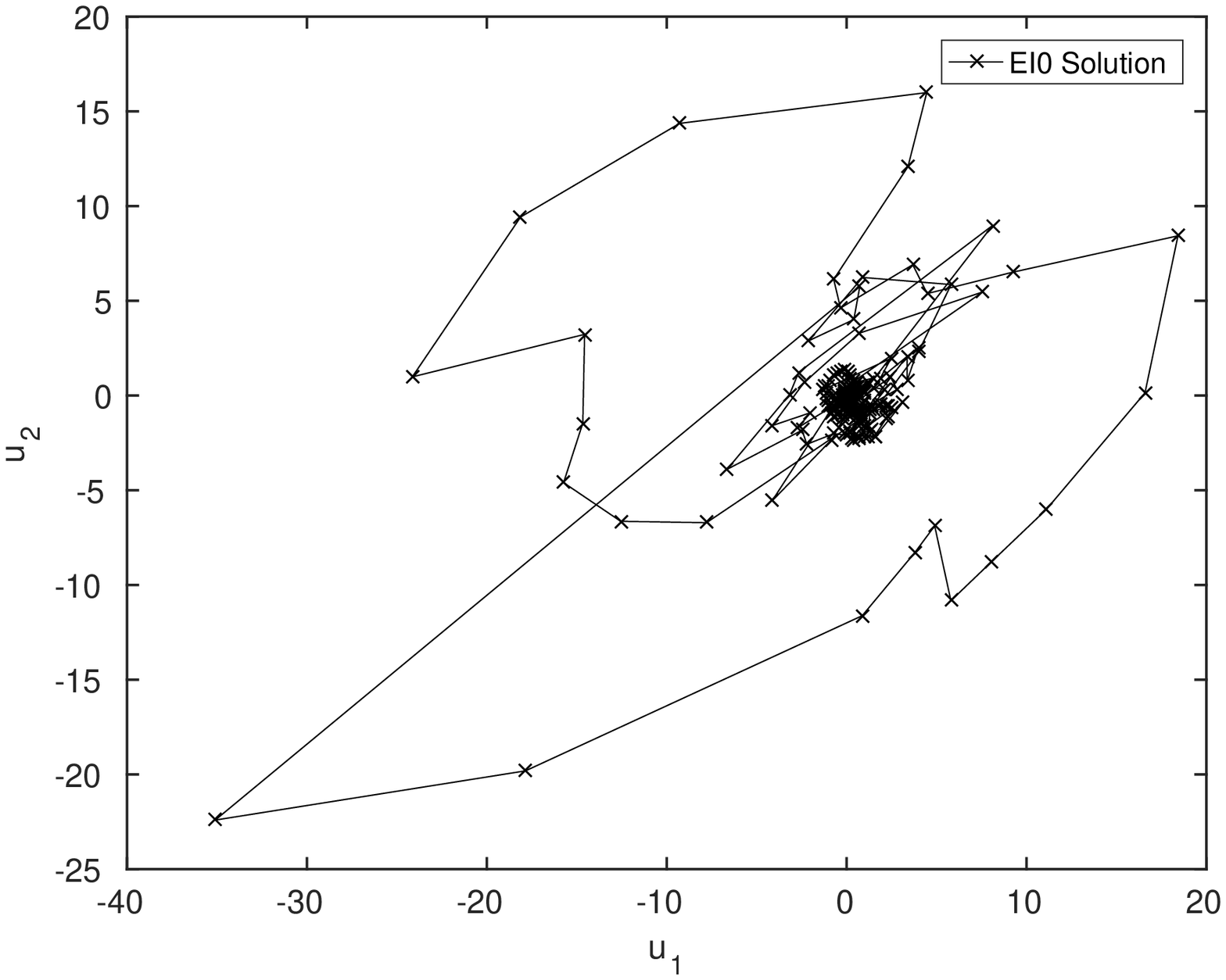}
\includegraphics[width=0.49\textwidth]{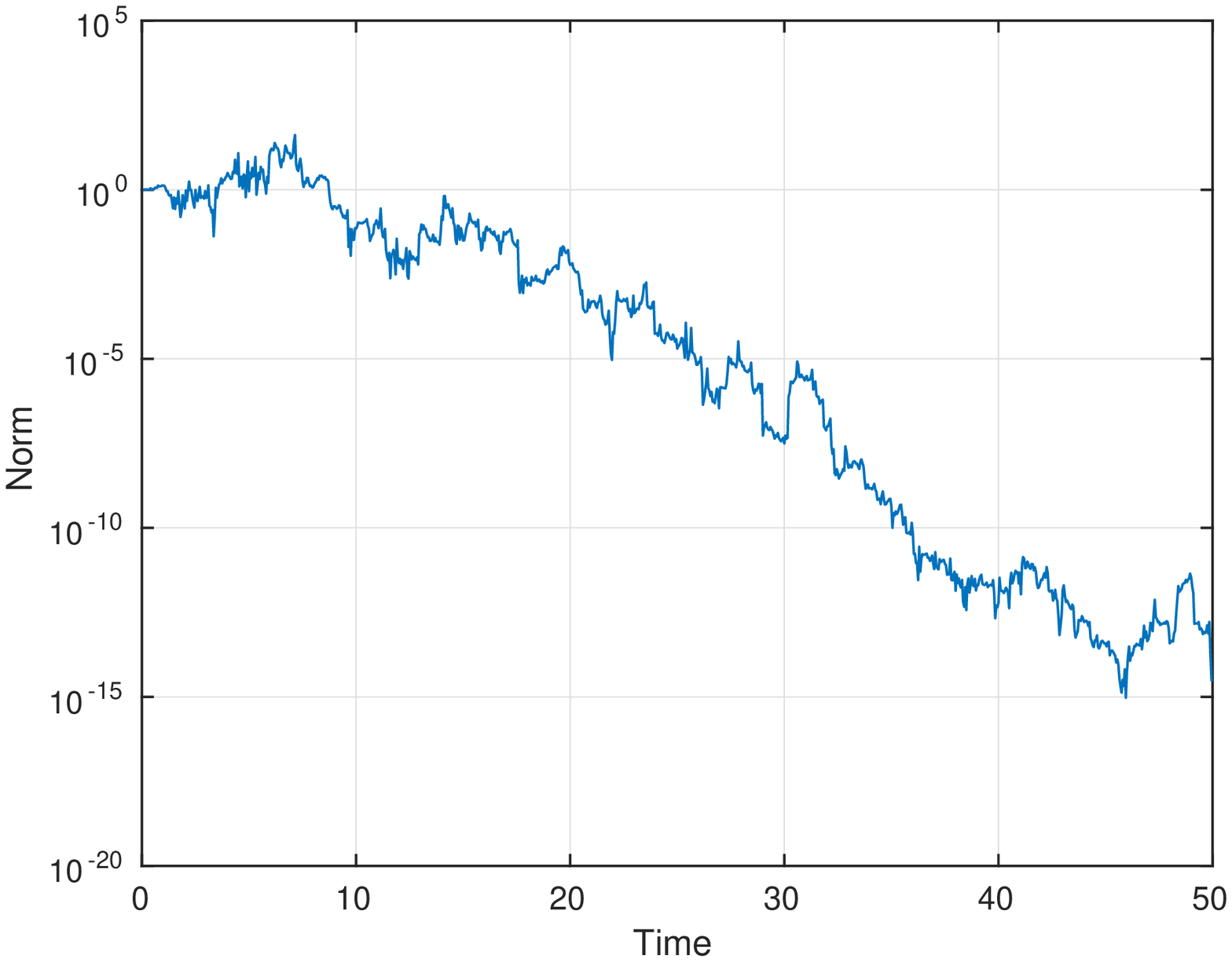}
\caption{Solution of \eqref{eq:stiff} with $T=50$,$\Delta t=0.05$, $M=1000$.
  In (a) we plot in the phase place the approximation to
  $\eval{\vec{u}(t)}$ found using \ETDint{SETD0} and in (b)
  $\|\eval{\vec{u}(t)}\|$.
  In (c) we plot in the phase place the approximation to
  $\eval{\vec{u}(t)}$ found using \expint{EI0} and in (d) $\|\eval{\vec{u}(t)}\|$.
  We see that \expint{EI0} better captures the true dynamics over this
  time interval.
}
\label{fig:6}
\end{center}
\end{figure}

\section{Proofs of the Main Results}
\label{sec:proofs}
Before giving the proof of main results, we need the following
results.%propositions. 

\begin{proposition}\label{Prop:1}
 Let Assumption \ref{ass:1}  hold. For each $T>0$ and
  $\vec{u} (0)=\vec{u}_0 \in  \mathbb{R} ^d$ there exists a unique 
  $\vec{u}$ satisfying \eqref{eq:EqABg} such that 
  $$
  \sup _{t  \in [0,T]} \Lnorm{\vec{u} (t)} =\sup _{t \in [0,T]}
  \eval{\norm{\vec{u} (t)} ^2} ^{1/2} < \infty.
  $$
  Furthermore, there exists  $K>0$ such that for $0\leq s,t \leq T$  
  \begin{equation}
    \Lnorm{\vec{u}(t)-\vec{u}(s)}\leq K \vert t-s  \vert ^{1/2}.
  \end{equation}
\end{proposition}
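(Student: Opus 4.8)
The plan is to recognise Proposition \ref{Prop:1} as the classical well-posedness result for SDEs with globally Lipschitz coefficients, after absorbing the linear parts into the coefficient functions. First I would set $\vec{\mu}(\vec{u}) := A\vec{u} + \vec{F}(\vec{u})$ and $\vec{\sigma}_i(\vec{u}) := B_i\vec{u} + \vec{g}_i(\vec{u})$. Since $A$ and $B_i$ are fixed matrices, the maps $\vec{u}\mapsto A\vec{u}$, $\vec{u}\mapsto B_i\vec{u}$ are globally Lipschitz with constants $\norm{A}$, $\norm{B_i}$ and satisfy a linear growth bound; combined with Assumption \ref{ass:1} this shows $\vec{\mu}$ and each $\vec{\sigma}_i$ are globally Lipschitz and of linear growth, with a constant depending only on $L$, $\norm{A}$, $\max_i\norm{B_i}$ and $m$. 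Existence of a unique $\mathcal{F}_t$-adapted (continuous) solution then follows from the standard Picard iteration / fixed point argument for SDEs, see \cite{MR1475218,Oksendal2003stochastic,gabrielbook}.

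Next I would establish the uniform second moment bound. Fix $T>0$, and for $R>0$ introduce the stopping time $\tau_R := \inf\{t\ge 0 : \norm{\vec{u}(t)}\ge R\}$. Applying It\^o's formula to $t\mapsto \norm{\vec{u}(t\wedge\tau_R)}^2$ and taking expectations removes the local-martingale term; the linear growth of $\vec{\mu}$ and $\vec{\sigma}_i$ gives $2\langle \vec{x},\vec{\mu}(\vec{x})\rangle + \sum_{i=1}^m\norm{\vec{\sigma}_i(\vec{x})}^2 \le C(1+\norm{\vec{x}}^2)$, hence
\[
\eval{\norm{\vec{u}(t\wedge\tau_R)}^2} \le \norm{\vec{u}_0}^2 + C\int_0^t \left(1+\eval{\norm{\vec{u}(s\wedge\tau_R)}^2}\right)\,ds.
\]
Gronwall's lemma yields a bound independent of $R$, and letting $R\to\infty$ with Fatou's lemma gives $\sup_{t\in[0,T]}\eval{\norm{\vec{u}(t)}^2} \le (1+\norm{\vec{u}_0}^2)e^{CT} =: C_T<\infty$, which is the first assertion.

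For the H\"older estimate I would take $0\le s\le t\le T$ and write $\vec{u}(t)-\vec{u}(s) = \int_s^t \vec{\mu}(\vec{u}(r))\,dr + \sum_{i=1}^m \int_s^t \vec{\sigma}_i(\vec{u}(r))\,dW_i(r)$. Using $(a+b)^2\le 2a^2+2b^2$, the Cauchy--Schwarz inequality on the time integral, and the It\^o isometry on each stochastic integral,
\[
\eval{\norm{\vec{u}(t)-\vec{u}(s)}^2} \le 2|t-s|\int_s^t \eval{\norm{\vec{\mu}(\vec{u}(r))}^2}\,dr + 2\sum_{i=1}^m \int_s^t \eval{\norm{\vec{\sigma}_i(\vec{u}(r))}^2}\,dr.
\]
Linear growth together with the bound $C_T$ shows both integrands are $\le C(1+C_T)$, so $\eval{\norm{\vec{u}(t)-\vec{u}(s)}^2}\le K^2|t-s|$ for $|t-s|\le 1$ (absorbing the quadratic term into the linear one), which gives $\Lnorm{\vec{u}(t)-\vec{u}(s)}\le K|t-s|^{1/2}$; for $|t-s|>1$ the estimate is immediate from $\norm{\vec{u}(t)-\vec{u}(s)}^2\le 2\norm{\vec{u}(t)}^2+2\norm{\vec{u}(s)}^2$ and a possibly larger $K$. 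I do not expect a genuine obstacle here: the only step requiring care is the a priori integrability needed to legitimately take expectations in It\^o's formula, which the localisation by $\tau_R$ handles, and the rest is routine.
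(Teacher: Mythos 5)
Your proof is correct and is the standard argument: the paper itself does not prove Proposition \ref{Prop:1} but simply defers to the reference \cite{gabrielbook}, which establishes exactly this classical well-posedness, moment bound and H\"older continuity result for globally Lipschitz, linear-growth coefficients by the same Picard/It\^o--Gronwall--isometry machinery you outline. The only point worth making explicit is that letting $R\to\infty$ uses $\tau_R\to\infty$ almost surely, i.e.\ non-explosion, which is part of the cited existence theorem; otherwise nothing is missing.
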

See \cite{gabrielbook} for the proof.

We now examine the remainder terms that arise from the local error.
Let us define the map for the exact flow 
\begin{multline} \label{eq:EI0_exact}
  \Psi_{\Exact}(t_{k+1},t_k,\vec{u}(t_k))= \\
\SG{t_{k+1},t_k} \vec{u}(t_k)+\SG{t_{k+1},t_k} \int _{t_k} ^{t_{k+1}} \SG{s,t_{k}}^{-1}
\tilde{\vec{f}} (\vec{u} (s))ds + \sum_{i=1} ^ m  \SG{t_{k+1},t_k}  \int _{t_k} ^{t_{k+1}}
  \SG{s,t_{k}}^{-1} \vec{g}_i(\vec{u}(s)) dW_i(s). 
\end{multline}
This exact flow will be used in analysis of \expint{EI0}. However, it
is more convenient to use the following Ito-Taylor expansion 
(see \eqref{eq:expansion}) to analyse \expint{\Mil}
\begin{multline} \label{eq:MilEI0_exact}
\Psi_{\Exact}(t_{k+1},t_k,\vec{u}(t_k))= \\ \SG{t_{k+1},t_k}
\vec{u}(t_k)+  \SG{t_{k+1},t_k}\int _{t_k} ^{t_{k+1}} \SG{s,t_{k}}^{-1} \tilde{\vec{f}}
(\vec{u} (s))ds  + \sum_{i=1}^m \SG{t_{k+1},t_k} \int _{t_k} ^{t_{k+1}} 
\vec{g}_i (\vec{u}(t_k)) dW_i(s) \\
+ \sum_{i=1}^m  \sum_{l=1}^m \SG{t_{k+1},t_k} \int
_{t_k} ^{t_{k+1}}\int _{t_k} ^s  \SG{r,t_{k}}^{-1}
\vec{H}_{i,l}(\vec{u}(r)) dW_l(r)dW_i(s)\\ + \sum_{i=1}^m \SG{t_{k+1},t_k} \int _{t_k}
^{t_{k+1}} \int _{t_k} ^s  \SG{r,t_{k}}^{-1} \vec{Q}_i(\vec{u}(r)) dr
dW_i(s).
\end{multline}
The numerical flows for  \expint{EI0} and \expint{\Mil} are given by
\begin{multline}\label{eq:flow_EI0}
  \Psi_{EI0}(t_{k+1},t_k,\vec{u}(t_k))=\SG{t_{k+1},t_k} \vec{u}(t_k)
  + \SG{t_{k+1},t_k} \int _{t_k} ^{t_{k+1}}  \tilde{\vec{f}} (\vec{u}
  (t_k))ds \\
  + \sum_{i=1}^m \SG{t_{k+1},t_k} \int _{t_k} ^{t_{k+1}}  \vec{g}_i(\vec{u}(t_k) )dW_i(s) 
\end{multline}
and
\begin{multline} \label{eq:flow_MilEI0}
  \Psi_{\MIL}(t_{k+1},t_k,\vec{u}(t_k))=\SG{t_{k+1},t_k}\vec{u}(t_k)+
  \SG{t_{k+1},t_k} \int _{t_k} ^{t_{k+1}}  \tilde{\vec{f}} (\vec{u}
  (t_k))ds \\
  + \sum_{i=1}^m \SG{t_{k+1},t_k} \int _{t_k} ^{t_{k+1}} 
  \vec{g}_i (\vec{u}(t_k)) dW_i(s)  
  + \sum_{i=1}^m  \sum_{l=1}^m  \SG{t_{k+1},t_k} \int
  _{t_k} ^{t_{k+1}}\int _{t_k} ^s 
  \vec{H}_{i,l}(\vec{u}(t_k)) dW_l(r)dW_i(s).
\end{multline}
First we look at the local error $R_{EI0}$ for \expint{EI0}, where 
$R_{EI0}$  is defined as
\begin{equation}
  \label{eq:Rdef1}
  R_{EI0}(t,s,\vec{u}(s))=\Psi_{\Exact}(t,s,\vec{u}(s))-\Psi_{EI0}(t,s,\vec{u}(s)).
\end{equation}

\begin{lemma}\label{lemma:1}
  Let  the Assumptions \ref{ass:1}  hold. Then
  \begin{equation}
    \Lnorm{\sum_{k=0} ^{N-1} \SG{t_N,t_{k+1}}   R_{EI0}(t_{k+1},t_k,\vec{u}(t_k))}^2=\mathcal{O}(\Delta t)
  \end{equation}
\end{lemma}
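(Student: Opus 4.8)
The plan is to use the multiplicative flow identities $\SG{t,r}\SG{r,s}=\SG{t,s}$ and their corollaries $\SG{t_k,0}^{-1}\SG{s,t_k}^{-1}=\SG{s,0}^{-1}$ and $\SG{t_N,t_{k+1}}\SG{t_{k+1},t_k}=\SG{t_N,t_k}=\SG{t_N,0}\SG{t_k,0}^{-1}$ (all valid for $0\le s\le r\le t\le T$ because $A$ and the $B_i$ commute) in order to recast the whole sum of local errors as a single Ito integral plus a single Lebesgue integral, premultiplied by one fixed random matrix.

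First I would write out the local error: subtracting \eqref{eq:flow_EI0} from \eqref{eq:EI0_exact} the terms in $\vec{u}(t_k)$ cancel and
\begin{multline*}
  R_{EI0}(t_{k+1},t_k,\vec{u}(t_k))
  =\SG{t_{k+1},t_k}\int_{t_k}^{t_{k+1}}\bigl(\SG{s,t_k}^{-1}\tilde{\vec{f}}(\vec{u}(s))-\tilde{\vec{f}}(\vec{u}(t_k))\bigr)\,ds\\
  +\SG{t_{k+1},t_k}\sum_{i=1}^m\int_{t_k}^{t_{k+1}}\bigl(\SG{s,t_k}^{-1}\vec{g}_i(\vec{u}(s))-\vec{g}_i(\vec{u}(t_k))\bigr)\,dW_i(s).
\end{multline*}
Premultiplying by $\SG{t_N,t_{k+1}}$ turns the leading factor into $\SG{t_N,t_k}=\SG{t_N,0}\SG{t_k,0}^{-1}$. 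I would then move the $\mathcal{F}_{t_k}$-measurable matrix $\SG{t_k,0}^{-1}$ inside each integral over $[t_k,t_{k+1})$ (legitimate since it is adapted and constant there) and absorb it via $\SG{t_k,0}^{-1}\SG{s,t_k}^{-1}=\SG{s,0}^{-1}$; summing over $k$ the piecewise constant left-endpoint data assemble into honest processes and one gets
\begin{multline*}
  \sum_{k=0}^{N-1}\SG{t_N,t_{k+1}}R_{EI0}(t_{k+1},t_k,\vec{u}(t_k))=\SG{t_N,0}\,\mathcal{E},\quad\text{where}\\
  \mathcal{E}:=\int_0^T\bigl(\SG{s,0}^{-1}\tilde{\vec{f}}(\vec{u}(s))-\bar{\vec{f}}(s)\bigr)\,ds+\sum_{i=1}^m\int_0^T\bigl(\SG{s,0}^{-1}\vec{g}_i(\vec{u}(s))-\bar{\vec{g}}_i(s)\bigr)\,dW_i(s),
\end{multline*}
with $\bar{\vec{f}}(s):=\SG{t_k,0}^{-1}\tilde{\vec{f}}(\vec{u}(t_k))$ and $\bar{\vec{g}}_i(s):=\SG{t_k,0}^{-1}\vec{g}_i(\vec{u}(t_k))$ for $s\in[t_k,t_{k+1})$. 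The gain is that $\mathcal{E}$ has \emph{adapted} integrands, so the Ito isometry and the Burkholder--Davis--Gundy (BDG) inequality apply, which was not the case for the individual summands, $\SG{t_N,t_k}$ being anticipating.

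Next I would apply Cauchy--Schwarz, $\Lnorm{\SG{t_N,0}\,\mathcal{E}}^2\le\eval{\norm{\SG{t_N,0}}^4}^{1/2}\eval{\norm{\mathcal{E}}^4}^{1/2}$. The first factor is $\mathcal{O}(1)$: from $\SG{t_N,0}=\exp\bigl((A-\tfrac12\sum_iB_i^2)T+\sum_iB_iW_i(T)\bigr)$ one has $\norm{\SG{t_N,0}}\le e^{cT}\prod_ie^{c|W_i(T)|}$, a product of independent log-normal factors with finite moments of all orders; the same bound gives $\sup_{0\le s\le t\le T}\eval{\norm{\SG{t,s}^{\pm1}}^p}<\infty$ for every $p$, used repeatedly below. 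It then remains to show $\eval{\norm{\mathcal{E}}^4}=\mathcal{O}(\Dt^2)$. Splitting $\mathcal{E}$ into drift and diffusion parts and applying Minkowski's integral inequality to the former and BDG followed by Cauchy--Schwarz in time to the latter, this reduces to the pointwise estimate, for $s\in[t_k,t_{k+1})$ and $\vec{\varphi}\in\{\tilde{\vec{f}},\vec{g}_i\}$,
\[
  \eval{\norm{\SG{s,0}^{-1}\vec{\varphi}(\vec{u}(s))-\SG{t_k,0}^{-1}\vec{\varphi}(\vec{u}(t_k))}^4}=\mathcal{O}(\Dt^2),
\]
which I would obtain by writing the difference as $\SG{s,0}^{-1}\bigl(\vec{\varphi}(\vec{u}(s))-\vec{\varphi}(\vec{u}(t_k))\bigr)+\bigl(\SG{s,0}^{-1}-\SG{t_k,0}^{-1}\bigr)\vec{\varphi}(\vec{u}(t_k))$ and bounding each product by Cauchy--Schwarz: for the first term, the Lipschitz bound of Assumption \ref{ass:1} and the $L^p$ ($p\ge2$) version of Proposition \ref{Prop:1} give $\norm{\vec{\varphi}(\vec{u}(s))-\vec{\varphi}(\vec{u}(t_k))}_{L^8}=\mathcal{O}(\Dt^{1/2})$; for the second, $\SG{s,0}^{-1}-\SG{t_k,0}^{-1}=\SG{t_k,0}^{-1}(\SG{s,t_k}^{-1}-I)$ together with the linear growth of $\vec{\varphi}$ and the bound $\eval{\norm{\SG{s,t_k}^{-1}-I}^p}=\mathcal{O}\bigl((s-t_k)^{p/2}\bigr)$, the latter being a routine consequence of Ito's formula applied to the matrix exponential $\SG{s,t_k}^{-1}$ and the moment estimates above. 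Collecting the pieces yields $\eval{\norm{\mathcal{E}}^4}=\mathcal{O}(\Dt^2)$ and hence the claim.

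The main obstacle is exactly the anticipating nature of $\SG{t_N,t_k}$: one cannot bound the diffusion contribution summand-by-summand with the Ito isometry, and the off-diagonal terms of the second moment do not visibly vanish. The factorisation $\SG{t_N,t_k}=\SG{t_N,0}\SG{t_k,0}^{-1}$ extracts the single anticipating object from the sum; the price is that fourth moments must be carried through the estimates, which is why the all-order moment bounds on $\SG{\cdot,\cdot}^{\pm1}$ and the $L^p$ Hölder-$\tfrac12$ regularity of both $\vec{u}$ and $s\mapsto\SG{s,t_k}^{-1}$ are needed; keeping those moment exponents consistent through the successive Cauchy--Schwarz splittings is the only real bookkeeping.
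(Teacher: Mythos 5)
Your proof is correct and hinges on the same key idea as the paper's, namely the commutativity-based factorisation $\SG{t_N,t_k}=\SG{t_N,0}\SG{t_k,0}^{-1}$ that converts the non-adapted prefactors into a single matrix $\SG{t_N,0}$ multiplying adapted integrands; but your execution is genuinely different. The paper adds and subtracts $\SG{s,t_k}^{-1}\tilde{\vec{f}}(\vec{u}(t_k))$ and $\SG{s,t_k}^{-1}\vec{g}_i(\vec{u}(t_k))$ to split the sum into four terms, handles the two drift terms by Cauchy--Schwarz over the sum (paying a factor $N$) plus Jensen, and applies the factorisation only to the two stochastic terms before invoking the Ito isometry, absorbing the random factor $\SG{t_N,0}$ into a ``generic constant $C$''. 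You instead assemble the whole sum into $\SG{t_N,0}\,\mathcal{E}$ and separate the prefactor honestly by Cauchy--Schwarz, which pushes you to fourth moments, BDG, and Minkowski. What your route buys is rigour exactly where the paper is loose: $\SG{t_N,0}$ is a random matrix not independent of the integrals it multiplies, so bounding $\Lnorm{\SG{t_N,0}X}^2$ by $C\Lnorm{X}^2$ really does require the higher-moment Cauchy--Schwarz (or a conditioning argument) that you supply, together with the all-order moment bounds on $\SG{\cdot,\cdot}^{\pm1}$ that you correctly extract from the log-normal structure. The price is that you need the $L^8$ H\"older-$\tfrac12$ regularity and $L^8$ moment bounds of $\vec{u}$, which are not literally contained in Proposition \ref{Prop:1} ($L^2$ only) though they are standard consequences of Assumption \ref{ass:1} for deterministic initial data; you should state this strengthening explicitly. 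Your pointwise splitting of $\SG{s,0}^{-1}\vec{\varphi}(\vec{u}(s))-\SG{t_k,0}^{-1}\vec{\varphi}(\vec{u}(t_k))$ is the same add-and-subtract as the paper's $I/II$ and $III/IV$ decomposition, just performed after the global assembly, and the two essential estimates --- Lipschitz plus H\"older regularity of $\vec{u}$, and $\eval{\norm{\SG{s,t_k}^{-1}-I}^p}=\mathcal{O}\bigl((s-t_k)^{p/2}\bigr)$ --- coincide with the paper's.
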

\begin{proof}
Considering the  exact flow \eqref{eq:EI0_exact} and the numerical flow \eqref{eq:flow_EI0}, the local error of \expint{EI0} is  given by
\begin{multline}
R_{EI0}(t_{k+1},t_k,\vec{u}(t_k))=\SG{t_{k+1},t_k}  \int _{t_k} ^{t_{k+1}}\left( \SG{s,t_{k}}^{-1}  \tilde{\vec{f}} (\vec{u} (s))-  \tilde{\vec{f}} (\vec{u} (t_k)\right)   ds  \\
 +\sum_{i=1}^m  \SG{t_{k+1},t_k}  \int _{t_k} ^{t_{k+1}}\left(   \SG{s,t_{k}}^{-1}  \vec{g}_i(\vec{u}(s))  -  \vec{g}_i(\vec{u}(t_k))\right)  dW_i(s).
\end{multline}
Adding and subtracting the terms   $\SG{s,t_{k}}^{-1}\tilde{\vec{f}}
(\vec{u} (t_k)$,  $\SG{s,t_{k}}^{-1} \vec{g}_i(\vec{u}(t_k))$ in the  first
and second integrals we have 
\begin{align*}
\lefteqn{ \Lnorm{\sum_{k=0} ^{N-1} \SG{t_N,t_{k+1}}  R_{EI0}(t_{k+1},t_k,\vec{u}(t_k))}^2
  \leq }&\\
& +4  \Lnorm{ \sum_{k=0} ^{N-1} \SG{t_N,t_{k}} \int _{t_k} ^{t_{k+1}} \SG{s,t_{k}}^{-1}\left(  \tilde{\vec{f}} (\vec{u} (s))- \tilde{\vec{f}} (\vec{u} (t_k)\right)   ds}^2 \\ 
& 4  \Lnorm{ \sum_{k=0} ^{N-1} \SG{t_N,t_{k}} \int _{t_k} ^{t_{k+1}} \left( \SG{s,t_k}^{-1} \tilde{\vec{f}} (\vec{u} (t_k))- \tilde{\vec{f}} (\vec{u} (t_k)\right)   ds}^2\\
& +4 \Lnorm{ \sum_{k=0} ^{N-1} \SG{t_N,t_{k}} \sum_{i=1}^m  \int _{t_k} ^{t_{k+1}} \SG{s,t_{k}}^{-1}\left(     \vec{g}_{i}(\vec{u}(s))  -   \vec{g}_{i}(\vec{u}(t_k))\right)  dW_i(s)  }^2 \\
& +4  \Lnorm{ \sum_{k=0} ^{N-1} \SG{t_N,t_{k}} \sum_{i=1}^m    \int _{t_k} ^{t_{k+1}}  \left(   \SG{s,t_{k}}^{-1}  \vec{g}_{i}(\vec{u}(t_k))  -  \vec{g}_{i}(\vec{u}(t_k))\right)  dW_i(s)  }^2 \\
& =I+II+III+IV.
\end{align*}
We now consider each of the terms $I$, $II$, $III$, $IV$ separately
and we start with $I$
\begin{align*}
I& \leq 4 N   \sum_{k=0} ^{N-1} \Lnorm{   \SG{t_{N},t_{k}}  \int _{t_k} ^{t_{k+1}}
  \SG{s,t_{k}}^{-1} \left(  \tilde{\vec{f}} (\vec{u} (s))-
   \tilde{\vec{f}} (\vec{u} (t_k)\right)   ds   }^2\\
 & \leq 4 N   \sum_{k=0} ^{N-1} C_k \Lnorm{ \int _{t_k} ^{t_{k+1}}
   \SG{s,t_{k}}^{-1} \left(  \tilde{\vec{f}} (\vec{u} (s))-
   \tilde{\vec{f}} (\vec{u} (t_k)\right)   ds   }^2 \\   
 & \leq 4 N  C \sum_{k=0} ^{N-1} \eval{\norm{ \int _{t_k} ^{t_{k+1}}
   \left( \SG{s,t_{k}}^{-1} \tilde{\vec{f}} (\vec{u} (s))-
   \tilde{\vec{f}} (\vec{u} (t_k)\right)   ds   }^2}
   \end{align*}
where $C= \sup_{k=0,1,2,...N-1} C_k $ and $C_k$ is due to boundedness of  $\SG{t_{N},t_{k}}$ in $L^2(\Omega, \mathbb{R}^d)$. However, in the following lines C is used as a generic constant  which may vary from line to line due to boundedness of $\SG{}$ and $\SG{}^{-1}$.  Now, Jensen's inequality, global
Lipschitz  property of  $\tilde{\vec{f}}$ and Proposition \ref{Prop:1}
are applied to get  
\begin{align*}
I   & \leq 4 N \Delta t C \sum_{k=0} ^{N-1}   \int _{t_k} ^{t_{k+1}}
  \eval{\norm{ \SG{s,t_{k}}^{-1} \left(  \tilde{\vec{f}} (\vec{u} (s))-
   \tilde{\vec{f}} (\vec{u} (t_k)\right)}^2}   ds   \\ 
   & \leq 4 T CL^2  \sum_{k=0} ^{N-1}   \int _{t_k} ^{t_{k+1}}
  \eval{\norm{ \left(   \vec{u} (s)-
    \vec{u} (t_k)\right)}^2}   ds   \\  
   & \leq 4 TC L^2  \sum_{k=0} ^{N-1}   \int _{t_k} ^{t_{k+1}}
    \vert s-t_k  \vert   ds=K_I \Delta t.
\end{align*}
Similarly, for $II$. It is  easy to see $II\leq K_{II} \Delta t$ by
considering the fact that $\eval{\norm{\left( \SG{s,t_k}^{-1} - I\right) \vec{v}}^2}\leq K
\vert s-t_k\vert$ for  any ${\mathcal{F} _ {t_k}  }$  measurable $\vec{v} \in L^2(\Omega,\mathbb{R}^d)$, which can be concluded from the Ito-Taylor expansion
of $\SG{s,t_k}^{-1}\vec{v}$. 
For the term $III$,
$$ 
III = 4 \Lnorm{  \SG{t_N,0}  \sum_{k=0} ^{N-1}  \sum_{i=1}^m  \int _{t_k} ^{t_{k+1}}\SG{s,0} ^{-1} \left(     \vec{g}_{i}(\vec{u}(s))  -   \vec{g}_{i}(\vec{u}(t_k))\right)  dW_i(s)  }^2 
$$
where $\SG{t_N,t_k}=\SG{t_N,0}\SG{t_k,0}^{-1}$  is due to commutativity of the matrices $A$ and $B_i$'s. We have by the Ito isometry

\begin{align*}
  III & \leq  4  C  \sum_{k=0} ^{N-1} \sum_{i=1}^m  \int _{t_k} ^{t_{k+1}} \eval{\norm{\SG{s,0} ^{-1} \left(     \vec{g}_{i}(\vec{u}(s))  -   \vec{g}_{i}(\vec{u}(t_k))\right)}^2}  ds \\
      & \leq  4 C  L^2 \sum_{k=0} ^{N-1}  K_{III} \int _{t_k} ^{t_{k+1}}  \vert s-t_k \vert  ds= \mathcal{O} (\Delta t )
\end{align*}
where  global Lipschitz property of $ \vec{g}_{i}$ and  Proposition \ref{Prop:1}  are used.
By a similar argument we  have $IV=\mathcal{O} (\Delta
t )$. Combining  $I$, $II$, $III$ and $IV$ we have the result.
%Therefore the overall sum of $I+II+III+IV$ is of only first order.
\end{proof}
We now prove \thref{thrm:1}.
By induction, we express the approximation of $\vec{u}(t_N)$ by
$\vec{u}_N$ found by \expint{EI0} at $t=t_N$ as
\begin{multline} \label{eq:sum_discrete_int}
\vec{u}_N= \SG{t_N,0} \vec{u} _0 
+ \sum_{k=0}^ {N-1}  \SG{t_N,t_k}  \int_{t_k} ^ {t_{k+1}}  \tilde{\vec{f}} (\vec{u} _k) ds +  \sum_{k=0}^ {N-1}  \sum_{i=1} ^m 
 \SG{t_N,t_k} \int_{t_k} ^ {t_{k+1}}   \vec{g}_i (\vec{u}_k)
dW_i(s). 
\end{multline}

Due to commutativity of the matrices $A$ and $B_i$'s, $\SG{t_N,t_k}=\SG{t_N,0}\SG{t_k,0}^{-1}$, the second matrix $\SG{t_k,0}^{-1}$ can be put inside the stochastic integrals as well as deterministic integral. 
Now we define  the continuous time process $\vec{u}_{\Delta t} (t)$
for \eqref{eq:sum_discrete_int} that agrees  with approximation $\vec{u}_k$
at  $t=t_k$. By  introducing the  variable $\hat{t}=t_k$  for $t_k\leq
t< t_{k+1}$, 
\begin{multline}
\vec{u} _{\Delta t} (t) = \SG{t,0} \vec{u} _{\Delta t} (0)+  \SG{t,0}\int_{0} ^ {t}  \SG{\hat{s},0}^{-1} \tilde{\vec{f}} (\vec{u} _{\Delta t} (\hat{s}))  ds +  \sum_{i=1} ^m  \SG{t,0}  \int_{0} ^{ t} \SG{\hat{s},0}^{-1}  \vec{g}_i(\vec{u} _{\Delta t} (\hat{s})) dW_i(s).
\end{multline}
This continuous version has the  property that $\vec{u} _{\Delta t}
(t_k) =\vec{u}_k$. By recalling definition of local error,
the iterated sum of the exact solution at $t=t_N$ 
is fonud by induction to be 
\begin{multline} \label{eq:EI0_exact_sum}
\vec{u}(t_N)= \SG{t_N,0}\vec{u}_0+  \SG{t_N,0}  \int _{0} ^{t_N} \SG{\hat{s},0}^{-1} \tilde{\vec{f}}
(\vec{u} (\hat{s}))ds \\ + \SG{t_N,0}  \sum_{i=1}^m  \int _{0} ^{t_N} 
\SG{\hat{s},0}^{-1}\vec{g}_i (\vec{u}(\hat{s})) dW_i(s)
 + \sum_{k=0}^ {N-1} \SG{t_N,t_{k+1}}   R_{EI0}(t_{k+1},t_k, \vec{u} (t_k)).
\end{multline}
Denoting the error by $\vec{e}(\hat{t})=\vec{u}(\hat{t})-\vec{u}
_{\Delta t} (\hat{t})$, 
we see that 
%\begin{multline}
$$\Lnorm{\vec{e}(\hat{t})}^2 \leq  
 \left( 3 \hat{t} C  L^2 +  3 C m L^2 \right) \int_{0} ^{ \hat{t}} \eval{\norm{\vec{e} (s)}^2}ds +3  K\Delta t,$$
%\end{multline}
%\begin{multline}
%\Lnorm{\vec{e}(\hat{t})}^2 \leq  \\
% 3 \hat{t} C  L^2 \int_{0} ^{ \hat{t}} \eval{\norm{\vec{e} (s)}^2}ds 
%+ 3 C m L^2 \int_{0} ^{ \hat{t}} \eval{\norm{\vec{e} (s)}^2}ds +3  K\Delta t
%\end{multline}
where $L$ is the largest one of the Lipschitz constants of the functions $\vec{g}_i$, $\tilde{\vec{f}}$. Finally,   Gronwall's inequality completes the proof.

\subsection{Proof of \thref{thrm:2}} 

We now examine the local error for \expint{\Mil}, given by \eqref{eq:MilEI0}.
\begin{lemma}\label{lemma:2} Let Assumptions \ref{ass:1} and
  \ref{ass:2} and \ref{ass:3} hold. Then 
  \begin{equation}
    \Lnorm{\sum_{k=0} ^{N-1} \SG{t_N,t_k+1}  R_{\MIL}(t_{k+1},t_k,\vec{u}(t_k))}^2=\mathcal{O}(\Delta t^2)
  \end{equation}
  where $R_{\MIL}$  is defined as
  \begin{equation*}
    \label{eq:Rdef}
    R_{\MIL}(t,s,\vec{u}(s))=\Psi_{\Exact}(t,s,\vec{u}(s))-\Psi_{\MIL}(t,s,\vec{u}(s)).
  \end{equation*}
\end{lemma}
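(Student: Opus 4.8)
The plan is to bound $R_{\MIL}$ by comparing the Ito--Taylor expansion of the exact flow \eqref{eq:MilEI0_exact} with the numerical flow \eqref{eq:flow_MilEI0}. Subtracting, the local error $R_{\MIL}(t_{k+1},t_k,\vec{u}(t_k))$ is a sum of several integral terms: (i) the deterministic integral $\SG{t_{k+1},t_k}\int_{t_k}^{t_{k+1}}\SG{s,t_k}^{-1}\tilde{\vec{f}}(\vec{u}(s))\,ds - \SG{t_{k+1},t_k}\int_{t_k}^{t_{k+1}}\tilde{\vec{f}}(\vec{u}(t_k))\,ds$; (ii) the double stochastic integral $\sum_{i,l}\SG{t_{k+1},t_k}\int_{t_k}^{t_{k+1}}\int_{t_k}^{s}\left(\SG{r,t_k}^{-1}\vec{H}_{i,l}(\vec{u}(r)) - \vec{H}_{i,l}(\vec{u}(t_k))\right)dW_l(r)\,dW_i(s)$; and (iii) the remainder term $\sum_i \SG{t_{k+1},t_k}\int_{t_k}^{t_{k+1}}\int_{t_k}^{s}\SG{r,t_k}^{-1}\vec{Q}_i(\vec{u}(r))\,dr\,dW_i(s)$. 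As in Lemma~\ref{lemma:1}, I would first use commutativity, $\SG{t_N,t_k}=\SG{t_N,0}\SG{t_k,0}^{-1}$, to pull $\SG{t_k,0}^{-1}$ inside each integral, so that summing over $k$ becomes a single integral (or double iterated integral) over $[0,t_N]$, and the outer $\SG{t_N,0}$ factors out with a bounded $L^2$ norm.

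The key observation is that terms of type (iii) already contribute $\mathcal{O}(\Delta t^2)$ to the squared $L^2$ norm: after pulling out $\SG{}$ factors, the $dr\,dW_i(s)$ iterated integral over a single subinterval has $L^2$ norm $\mathcal{O}(\Delta t^{3/2})$, and after using the Ito isometry on the sum over $k$ (the increments $dW_i(s)$ on disjoint subintervals are orthogonal) one gets $\sum_k \mathcal{O}((\Delta t^{3/2})^2) = \sum_k \mathcal{O}(\Delta t^3) = \mathcal{O}(\Delta t^2)$; here I would invoke Assumption~\ref{ass:1} and linear growth of $\vec{Q}_i$ (which follows from the assumed smoothness and the structure of $\vec{Q}_i$ in terms of $A,\vec{F},D\vec{g}_i,D^2\vec{g}_i,B_l$ — using Assumption~\ref{ass:2} together with the growth bounds) plus Proposition~\ref{Prop:1}. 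For terms of type (i), the deterministic integral, I would add and subtract $\SG{s,t_k}^{-1}\tilde{\vec{f}}(\vec{u}(t_k))$ and split: the part $\int(\SG{s,t_k}^{-1}-I)\tilde{\vec{f}}(\vec{u}(t_k))\,ds$ is $\mathcal{O}(\Delta t^{3/2})$ per subinterval using the estimate $\eval{\norm{(\SG{s,t_k}^{-1}-I)\vec{v}}^2}\leq K|s-t_k|$ from Lemma~\ref{lemma:1}; the part $\int\SG{s,t_k}^{-1}(\tilde{\vec{f}}(\vec{u}(s))-\tilde{\vec{f}}(\vec{u}(t_k)))\,ds$ uses global Lipschitz of $\tilde{\vec{f}}$ and $\eval{\norm{\vec{u}(s)-\vec{u}(t_k)}^2}\leq K|s-t_k|$, giving again $\mathcal{O}(\Delta t^{3/2})$ per subinterval; summing deterministically (Jensen/Cauchy--Schwarz in $k$ contributes a factor $N$, i.e. $\Delta t^{-1}$, inside the square) yields $N\cdot N\cdot(\Delta t^{3/2})^2 = \mathcal{O}(\Delta t)$ — which is \emph{not} good enough, so for the deterministic terms I would need to be more careful and exploit cancellation via a further Ito--Taylor expansion of $\tilde{\vec{f}}(\vec{u}(s))-\tilde{\vec{f}}(\vec{u}(t_k))$ to isolate a martingale part (giving $\mathcal{O}(\Delta t^2)$ after Ito isometry) and a drift part of size $\mathcal{O}(\Delta t^2)$ per subinterval (so $\mathcal{O}(\Delta t^2)$ after summing deterministically with the $N$-factor, since $N\cdot N\cdot\Delta t^4 = \Delta t^2$). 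This is the standard Milstein-type bookkeeping.

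The main obstacle — and the heart of the proof — is term (ii), the double stochastic integral involving $\vec{H}_{i,l}$. After pulling out $\SG{}$ factors, I would add and subtract $\SG{r,t_k}^{-1}\vec{H}_{i,l}(\vec{u}(t_k))$ inside, splitting into $\int\int(\SG{r,t_k}^{-1}-I)\vec{H}_{i,l}(\vec{u}(t_k))\,dW_l\,dW_i$ and $\int\int\SG{r,t_k}^{-1}(\vec{H}_{i,l}(\vec{u}(r))-\vec{H}_{i,l}(\vec{u}(t_k)))\,dW_l\,dW_i$. For both, applying the Ito isometry twice (first in $s$, using orthogonality of the $dW_i(s)$ increments across subintervals, then in $r$) reduces the squared $L^2$ norm of the sum to $\sum_k \int_{t_k}^{t_{k+1}}\int_{t_k}^{s}\eval{\norm{\cdots}^2}\,dr\,ds$, and the inner expectation is $\mathcal{O}(|r-t_k|)$ — using the estimate on $\SG{r,t_k}^{-1}-I$ for the first piece, and for the second piece the global Lipschitz property of $\vec{H}_{i,l}$ (which follows from Assumption~\ref{ass:3}, exactly as noted after \thref{thrm:2}, together with Lipschitz of the $B_l$ terms) plus Proposition~\ref{Prop:1}. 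This gives $\sum_k \mathcal{O}(\Delta t^3) = \mathcal{O}(\Delta t^2)$ as required. The delicate point throughout is ensuring that the double sum over $(i,l)$ and the Cauchy--Schwarz step over $k$ are handled so that for the genuinely stochastic (martingale) contributions one uses the Ito isometry — which produces only a single factor of $N$ — rather than a crude triangle inequality which would lose a factor $\Delta t$; combining all contributions $I$ through the last term then gives the claimed $\mathcal{O}(\Delta t^2)$ bound.
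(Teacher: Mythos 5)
Your proposal is correct and follows essentially the same route as the paper: the same decomposition of $R_{\MIL}$ into the deterministic, double-stochastic and $\vec{Q}_i$ pieces, the same add-and-subtract of $\SG{s,t_k}^{-1}\tilde{\vec{f}}(\vec{u}(t_k))$ and $\SG{r,t_k}^{-1}\vec{H}_{i,l}(\vec{u}(t_k))$, the iterated It\^o isometry with the Lipschitz bound on $\vec{H}_{i,l}$ from Assumption \ref{ass:3}, and the crucial refinement of the deterministic term. Your observation that the naive Cauchy--Schwarz bound on the drift integral only yields $\mathcal{O}(\Delta t)$ and must be repaired by an It\^o--Taylor expansion of $\tilde{\vec{f}}(\vec{u}(s))$ isolating a martingale increment is exactly the paper's argument via the orthogonality $\eval{\langle\Theta_k,\Theta_l\rangle}=0$ for $k\neq l$.
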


\begin{proof}
Considering the  exact flow \eqref{eq:MilEI0_exact} and the numerical flow \eqref{eq:flow_MilEI0}  corresponding to the scheme \expint{\Mil}, we have 
\begin{multline}
   R_{\MIL}(t_{k+1},t_k,\vec{u}(t_k)) =   \SG{t_{k+1},t_k} \int _{t_k} ^{t_{k+1}} \left( \SG{s,t_{k}}^{-1} \tilde{\vec{f}} (\vec{u} (s))-  \tilde{\vec{f}} (\vec{u} (t_k)\right)   ds \\+   \sum_{i=1}^m  \sum_{l=1}^m \SG{t_{k+1},t_k}\int _{t_k} ^{t_{k+1}}\int _{t_k} ^s\left(   \SG{r,t_{k}}^{-1}  \vec{H}_{i,l}(\vec{u}(r))  -   \vec{H}_{i,l}(\vec{u}(t_k))\right)  dW_l(r)dW_i(s)  \\
    +  \sum_{i=1}^m \SG{t_{k+1},t_k} \int _{t_k} ^{t_{k+1}} \int _{t_k} ^s \SG{r,t_{k}}^{-1}  \vec{Q}_i(\vec{u}(r)) dr dW_i(s).
\end{multline}
Adding and subtracting the terms  $\SG{s,t_k} ^{-1} \tilde{\vec{f}}
(\vec{u} (t_k)$,  $\SG{s,t_k} ^{-1}\vec{H}_{i,l}(\vec{u}(t_k))$ in the
first and second integrals respectively and  summing and taking the
norm and applying Jensen's inequality, we have
$$ \Lnorm{\sum_{k=0} ^{N-1} \SG{t_N,t_{k+1}}  R_{\MIL}(t_{k+1},t_k,\vec{u}(t_k))}^2 \leq I+II+III+IV+V$$
with 
$$I:=5 \Lnorm{ \sum_{k=0} ^{N-1} \SG{t_{N},t_k} \int _{t_k} ^{t_{k+1}}  \SG{s,t_k} ^{-1} \left(
    \tilde{\vec{f}} (\vec{u} (s))- 
    \tilde{\vec{f}} (\vec{u} (t_k)\right)   ds}^2$$
$$II:= 5  \Lnorm{ \sum_{k=0} ^{N-1} \SG{t_{N},t_k} \int _{t_k} ^{t_{k+1}} \left(
    \SG{s,t_{k}}^{-1} \tilde{\vec{f}} (\vec{u} (t_k))- 
    \tilde{\vec{f}} (\vec{u} (t_k))\right)   ds}^2$$
$$III:=5 \Lnorm{ \sum_{k=0} ^{N-1} \sum_{i=1}^m  \sum_{l=1}^m \SG{t_{N},t_k} \int
  _{t_k} ^{t_{k+1}}\int _{t_k} ^s \SG{r,t_k} ^{-1} \left(   
    \vec{H}_{i,l}(\vec{u}(r))  - 
    \vec{H}_{i,l}(\vec{u}(t_k))\right)  dW_l(r)dW_i(s)  }^2$$    

$$IV:=5 \Lnorm{ \sum_{k=0} ^{N-1} \sum_{i=1}^m  \sum_{l=1}^m  \SG{t_{N},t_k} \int _{t_k} ^{t_{k+1}}\int _{t_k} ^s\left(   \SG{r,t_{k}}^{-1}  \vec{H}_{i,l}(\vec{u}(t_k))  -   \vec{H}_{i,l}(\vec{u}(t_k))\right)  dW_l(r)dW_i(s)  }^2$$

and remainder
$$V=:5 \Lnorm{\sum_{k=0} ^{N-1}  \sum_{i=1}^m \SG{t_N,t_k} \int _{t_k} ^{t_{k+1}}
  \int _{t_k} ^s  \SG{r,t_{k}}^{-1}  \vec{Q}_i(\vec{u}(r))dr  dW_i(s)}^2.$$ 
We now consider each of the terms $I$, $II$, $III$, $IV$, $V$ separately
and we start with $I$. By Assumption \ref{ass:3},  we  have the following Ito-Taylor expansion for $\tilde{\vec{f}}$ 
\begin{align*}
\tilde{\vec{f}} (\vec{u(s)})& =\tilde{\vec{f}} (\vec{u}(t_k))+
  \sum_{i=1}^{m}D\tilde{\vec{f}} (\vec{u} (t_k)) \left( B_i
  \vec{u}(t_k)+\vec{g}_i( \vec{u}(t_k)) \right)
  \left(W_i(s)-W_i(t_k) \right)+R_{\tilde{f}} \\
& = \tilde{\vec{f}} (\vec{u}(t_k))+ \sum_{i=1}^{m}K_i\left(W_i(s)-W_i(t_k) \right)+R_{\tilde{f}}.
\end{align*}
We know that $R_{\tilde{f}}=\mathcal{O} (s-t_k)$, see for example \cite{gabrielbook}. 
By Jensen's inequality for the sum and  Ito-Taylor expansion,
\begin{align*}
I  \leq & 10 \eval{ \norm{  \sum_{k=0} ^{N-1}  \SG{t_N,t_k}  \int _{t_k} ^{t_{k+1}}
   \left(  \sum_{i=1}^{m} K_i \left(W_i(s)-W_i(t_k) \right)  \right)   ds   }^2} \\
& +10  \eval{ \norm{  \sum_{k=0} ^{N-1} \SG{t_N,t_k}   \int _{t_k} ^{t_{k+1}}
   R_{\tilde{f}}  ds   }^2}.
\end{align*}
By boundedness of  $\SG{t_N,t_k}$, we have
$$
I \leq 10 C \eval{ \norm{  \sum_{k=0} ^{N-1}   \int _{t_k} ^{t_{k+1}}
   \left(  \sum_{i=1}^{m} K_i \left(W_i(s)-W_i(t_k) \right)  \right)   ds   }^2}
 +10 C \eval{ \norm{  \sum_{k=0} ^{N-1}   \int _{t_k} ^{t_{k+1}}
   R_{\tilde{f}}  ds   }^2}.
$$
%where $C=\sup_{k=0,1,\ldots,N-1}C_k$ and 
Let us write $I=I_a+I_b$ and investigate the first term $I_a$. By the orthogonality relation
$\eval{<\Theta_k,\Theta_l>}=0$, $k\neq l$  for $$\Theta _k= \int _{t_k} ^{t_{k+1}}
   \left(  \sum_{i=1}^{m} K_i  \left(W_i(s)-W_i(t_k) \right)  \right)
   ds,$$ 
we have
$$
% \eval{ \norm{ \sum_{k=0} ^{N-1}   \int _{t_k} ^{t_{k+1}}
%   \left(  \sum_{i=1}^{m} K_i \left(W_i(s)-W_i(t_k) \right)  \right)
%   ds   }^2}
I_a= \sum_{k=0} ^{N-1}   \eval{ \norm{   \int _{t_k} ^{t_{k+1}}
   \left(  \sum_{i=1}^{m} K_i \left(W_i(s)-W_i(t_k) \right)  \right)   ds   }^2}.
$$    
By two applications of Jensen's inequality for the integral and sum
$$
I_a \leq  K \Delta t \sum_{k=0} ^{N-1} \sum_{i=1} ^{m}  \int _{t_k} ^{t_{k+1}} \eval{ \norm{  W_i(s)-W_i(t_k) }^2 }ds=\mathcal{O} (\Delta t^2).
$$
Since $R_f$ contains higher order terms, we conclude $I=\mathcal{O} (\Delta t^2)$. Similarly, for II we find the same order by following same arguments. 
 
For III, we have by Ito isometry applied consecutively for outer and
inner stochastic integrals 
\begin{align*}
\lefteqn{III\leq}&  \\
& 5  C    \Lnorm{  \sum_{k=0} ^{N-1}   \sum_{i=1}^m  \sum_{l=1}^m  \int _{t_k} ^{t_{k+1}}\int _{t_k} ^s \SG{r,0}^{-1}\left(    \vec{H}_{i,l}(\vec{u}(r))  -   \vec{H}_{i,l}(\vec{u}(t_k))\right)  dW_l(r)dW_i(s)  }^2 \\ 
& = 5 C    \sum_{k=0} ^{N-1} \sum_{i=1}^m   \int _{t_k} ^{t_{k+1}} \eval{\norm{  \left(  \sum_{l=1}^m \int _{t_k} ^s  \SG{r,0}^{-1}\left(    \vec{H}_{i,l}(\vec{u}(r))  - \vec{H}_{i,l}(\vec{u}(t_k))\right)  dW_l(r)\right) }^2}ds \\
& = 5  C    \sum_{k=0} ^{N-1}   \sum_{i=1}^m  \sum_{l=1}^m  \int _{t_k} ^{t_{k+1}} \int
_{t_k} ^s   \eval{\norm{ \SG{r,0}^{-1}\left(   
    \vec{H}_{i,l}(\vec{u}(r))  -
    \vec{H}_{i,l}(\vec{u}(t_k)) \right)}^2}  drds.
\end{align*}
By global Lipschitz property of $ \vec{H}_{i,l}$ and  Proposition \ref{Prop:1} 
\begin{align*}
III
& \leq K_{III}  \sum_{k=0} ^{N-1} \int _{t_k} ^{t_{k+1}} \int _{t_k}
^s \vert r-t_k \vert  dr ds= K_{III} \Delta t ^2.
\end{align*}
In a similar way,  it can be shown that $IV \leq  K_{IV} \Delta t ^2$.
Since $\int _{t_k} ^{t_{k+1}}\int_{t_k}^s drdW(s) \sim N (0,
\frac{1}{3} \Delta t ^3 )$, it is straightforward to
see $ V \leq K_{V} \Delta t ^2$.   

%The over all sum is $\mathcal{O}(\Delta t^2)$.
\end{proof}

We now prove \thref{thrm:2}. As in the proof of  \thref{thrm:1},
we define  the continuous time process $\vec{u}_{\Delta t} (t)$
for \expint{\Mil} that agrees  with approximation $\vec{u}_k$
at  $t=t_k$. By  introducing the  variable $\hat{t}=t_k$  for $t_k\leq
t< t_{k+1}$, 
\begin{multline}
\vec{u} _{\Delta t} (t) = \SG{t,0} \vec{u} _{\Delta t} (0)+  \SG{t,0}\int_{0} ^ {t}  \SG{\hat{s},0}^{-1} \tilde{\vec{f}} (\vec{u} _{\Delta t} (\hat{s}))  ds +  \sum_{i=1} ^m  \SG{t,0}  \int_{0} ^{ t} \SG{\hat{s},0}^{-1}  \vec{g}_i(\vec{u} _{\Delta t} (\hat{s})) dW_i(s) \\
+ \sum_{i=1} ^m   \sum_{l=1} ^m   \SG{t,0} \int_{0} ^ {t} \int_{\hat{s}} ^ {s} \SG{\hat{s},0}^{-1} \vec{H}_{i,l}(\vec{u} _{\Delta t} (\hat{s}))  dW_l(r)dW_i(s).
\end{multline}
The iterated sum of the exact solution  at $t=t_N$ is obtained
inductively to be
\begin{multline} \label{eq:MilEI0_exact_sum}
\vec{u}(t_N)= \SG{t_N,0}\vec{u}_0+  \SG{t_N,0}  \int _{0} ^{t_N} \SG{\hat{s},0}^{-1} \tilde{\vec{f}}
(\vec{u} (\hat{s}))ds + \SG{t_N,0}  \sum_{i=1}^m  \int _{0} ^{t_N} 
\SG{\hat{s},0}^{-1}\vec{g}_i (\vec{u}(\hat{s})) dW_i(s) 
\\+ \sum_{i=1}^m  \sum_{l=1}^m \SG{t_N,0} \int
_{0} ^{t_N} \int _{\hat{s}} ^s  \SG{\hat{s},0}^{-1}
\vec{H}_{i,l}(\vec{u}(\hat{s})) dW_l(r)dW_i(s)+ \sum_{k=0}^ {N-1} \SG{t_N,t_{k+1}}   R_{\MIL}(t_{k+1},t_k, \vec{u} (t_k))
\end{multline}
Denoting the error by $\vec{e}(\hat{t})=\vec{u}(\hat{t})-\vec{u}
_{\Delta t} (\hat{t})$, 
we see that 
\begin{multline}
\Lnorm{\vec{e}(\hat{t})}^2 \leq 
 \left( 4 \hat{t} C  L^2 + 4 CL^2m(1+m) 
\right)\int_{0} ^{ \hat{t}} \eval{\norm{\vec{e} (s)}^2}ds 
+4  K\Delta t ^2.
\end{multline}
%\begin{multline}
%\Lnorm{\vec{e}(\hat{t})}^2 \leq  \\
% 4 \hat{t} C  L^2 \int_{0} ^{ \hat{t}} \eval{\norm{\vec{e} (s)}^2}ds 
%+ 4 C m L^2 \int_{0} ^{ \hat{t}} \eval{\norm{\vec{e} (s)}^2}ds 
%+ 4 C m^2 L^2 \int_{0} ^{ \hat{t}} \eval{\norm{\vec{e} (s)}^2}ds+4  K\Delta t ^%2
%\end{multline}
Finally,   Gronwall's inequality completes the proof.%reveals the order of convergence.

\section{Conclusion and Remarks}
Exponential integrators that take  advantage of
Geometric Brownian Motion have been derived and their strong
convergence properties discussed. Furthermore we introduced
a homotopy based scheme that can take advantage of linearity in the
diffusion and also effectively handle nonlinear noise.
The proposed schemes are particularly well suited to the SDEs
arising from the semi-discretisation of a SPDE where typically
diagonal noise arises. Where the SDEs are not of the semi-linear form
of \eqref{eq:EqABg} then a Rosenbrock type method could be applied,
similar to \cite{doi:10.1137/080717717}.
% and the extension of convergence analysis of these
%exponential integrators for SPDEs is in preparation. For 
As mentioned in \secref{sec:Mainresults} the exponential integrators
suggest new forms of taming coefficients for for SDEs with non
globally Lipschitz drift and diffusion terms \cite{hutzenthaler2015numerical},
see \cite{ErdoganLordTaming}.
Our numerical examples show that these new exponential based schemes 
are more efficient than the standard integrators and also deal well
with the stiff linear problem.
In addition we see the effectiveness of the homotopy
approach with the simple choice of parameter in \eqref{eq:hompar},
(it would be interesting to investigate an adaptive choice in the
future).

%Another  interesting implication of  exponential integrators is that
%they suggest new kinds of taming coefficients for SDEs with non
%globally Lipshitz drift and diffusion terms
%\citep{hutzenthaler2015numerical}. 

%
\section*{Acknowledgements}
\noindent
The first author was supported by Tubitak grant : 2219 International
Post-Doctoral Research Fellowship Programme and work was
completed at the Department of Mathematics, Maxwell Institute, Heriot
Watt University, UK.
We would like to thank Raphael Kruse for his comments on an earlier draft.

%  
%    \leq K (\Delta t)
\bibliographystyle{plain}
\bibliography{mysource}

\end{document}